\newcommand{\myinclude}[1]{\includegraphics[width=3.2cm, height=3cm]{#1}}
\newcommand{\algo}{CENTREx}
\newcommand{\dalgo}{DeCENTREx}
\newcommand{\term}{\Sigma}
\newcommand{\pastorv}[1]{\textcolor{black}{#1}}
\newcommand{\duprazv}[1]{\textcolor{black}{#1}}
\newcommand{\mycdot}{\bullet}
\newcommand{\NewStd}{r}
\newcommand{\TheThreshold}[1]{#1\mu(\level)}
\newcommand{\Topt}[1]{\test_{\TheThreshold{#1}}}
\newcommand{\TheFunc}{h_N}
\def\myvspace{\vspace{0cm}}
\def\mycdot{\bullet}
\def\d{\mathrm{d}}
\newcommand{\VZ}[1]{\pastorv{\Zbm(#1)}}
\def\dev{\Deltabm}
\def\myvspace{\vspace{0.2cm}}
\newcommand{\myQ}[1]{\Marcum \left ( \tol , #1 \right )}
\newcommand{\estCtr}{\widehat{\Ctr}}
\newcommand{\dt}{{\mathrm d} t}
\newcommand{\dz}{{\mathrm d} z}
\def\Ibf{\mathbf{I}}
\newcommand{\Bbm}{{\bm B}}
\newcommand{\Psibm}{{\bm \Psi}}
\newcommand{\Deltabm}{{\bm \Delta}}
\newcommand{\thetabm}{{\bm \theta}}
\newcommand{\Tbm}{{\bm T}}
\newcommand{\xibm}{{\bm \xi}}
\newcommand{\Xbm}{{\bm X}}
\newcommand{\tbm}{{\bm t}}
\newcommand{\xbm}{{\bm x}}
\newcommand{\Ybm}{{\bm Y}}
\newcommand{\ybm}{{\bm y}}
\newcommand{\zbm}{{\bm z}}
\newcommand{\Zbm}{{\bm Z}}
\newcommand{\fbm}{{\bm f}}
\def\Ebb{\mathbb{E}}
\def\Fbb{\mathbb{F}}
\def\Gbb{\mathbb{G}}
\def\Pbb{\mathbb{P}}
\def\Rbb{\mathbb{R}}
\def\Hcal{\mathcal{H}}
\def\Ncal{\mathcal{N}}
\def\Scal{\mathcal{Y}}
\newcommand{\Tfrak}{\mathfrak T}
\def\ANcal{\mathcal{AN}}
\def\Rset{\Rbb}
\def\dim{d}
\newcommand{\Expect}[1]{\Ebb \left [ \, #1 \, \right ]}
\newcommand{\tol}{\tau}
\newcommand{\transpose}{\mathrm{T}}
\newcommand{\Noise}{\Xbm}
\newcommand{\obs}{\Ybm}
\newcommand{\Obs}{\Ybm}
\newcommand{\mybig}{\big}
\newcommand{\test}{\Tfrak}
\def\radius{\rho}
\def\level{\gamma}
\def\tsub{\lambda_\level}
\def\tsubprime{\lambda_{\level'}}
\newcommand{\Proba}[1]{\Pbb \left [ \, #1 \, \right ]}
\newcommand{\Identity}[1]{\Ibf_{#1}}
\def\std{\sigma}
\newcommand{\Ctr}{\thetabm}
\newcommand{\ctr}{\theta}
\newcommand{\Marcum}{Q_{\dim/2}}
\newcommand{\Id}{\Identity{\dim}}
\newcommand{\pval}[2]{\widehat{\level}_{#2}(#1)}
\newcommand{\thepval}[1]{\widehat{\level}_{\sqrt{2}}(#1)}
\newcommand{\IfF}{\text{IF}}
\newcommand{\Mk}{\mathcal{M}}
\newcommand{\Ectr}{\Phi}
\newcommand{\Ck}{\mathcal{C}_k}
\newtheorem{Proposition}{Proposition}
\newtheorem{Lemma}{Lemma}
\title{Decentralized Clustering based on Robust Estimation and Hypothesis Testing}
\author{Dominique Pastor, Elsa Dupraz, François-Xavier Socheleau \\ \small IMT Atlantique, Lab-STICC, Univ. Bretagne Loire, Brest, France}
\begin{document}

\maketitle

\begin{abstract}
This paper considers a network of sensors without fusion center that may be difficult to set up in applications involving sensors embedded on autonomous drones or robots. 
In this context, this paper considers that the sensors must perform a given clustering task in a fully decentralized setup.
Standard clustering algorithms usually need to know the number of clusters and are very sensitive to initialization, which makes them difficult to use in a fully decentralized setup. 
In this respect, this paper proposes a decentralized model-based clustering algorithm that overcomes these issues. 
The proposed algorithm is based on a novel theoretical framework that relies on hypothesis testing and robust M-estimation. 
More particularly, the problem of deciding whether two data belong to the same cluster can be optimally solved via Wald's hypothesis test on the mean of a Gaussian random vector. The p-value of this test makes it possible to define a new type of score function, particularly suitable for devising an M-estimation of the centroids. The resulting decentralized algorithm efficiently performs clustering without prior knowledge of the number of clusters. It also turns out to be less sensitive to initialization than the already existing clustering algorithms, which makes it appropriate for use in a network of sensors without fusion center.
\end{abstract}

\section{Introduction}\label{sec:intro}
Networks of sensors are now used in a wide range of applications in medicine, in telecommunications, or in environmental domains~\cite{yick08CN}.
They are employed, for example, for human health monitoring~\cite{omeni08BCS}, activity recognition on home environments~\cite{ordonez13S}, spectrum sensing in cognitive radio~\cite{sahasranand15ICC}, and so forth. 
In these applications, a fusion center can collect all the data from all the sensors and perform a given estimation or learning task over the collected data. 
However, it is not always practical to set up a fusion center, especially in recent applications involving autonomous drones or robots~\cite{tuna14AHN}.
In such applications in which no fusion center is available, the sensors should perform the learning task by themselves in a fully decentralized setup.

In this paper, we assume that the sensors have to perform decentralized clustering on the data measured within the network.
The aim of clustering is to divide the data into clusters such that the data inside a cluster are similar with each other and different from the data that belong to other clusters.
Clustering is considered in various applications of sensor networks, such as parking map construction~\cite{zhou16ICC} or controller placement in telecommunication networks~\cite{wang16ICC}.
One of the most popular clustering algorithms is K-means~\cite{jain10PRL}, due to its simplicity and its efficiency. 

The K-means algorithm groups $N$ measurement vectors into $K$ clusters with a two-step iterative procedure. 
It was proved that this iterative procedure always converges to a local minimum~\cite{jain10PRL}. However, in order to get a chance to reach the global minimum, the K-means algorithm needs to be initialized properly. 
Proper initialization can be obtained with the K-means++ procedure~\cite{arthur07SIAM}, which requires computing all the two by two distances between all the measurement vectors in the dataset. 
As another issue, the K-means algorithm need to know the number $K$ of clusters. When $K$ unknown, it is possible to apply a penalized method that requires applying the K-means algorithm several times with different numbers of cluster~\cite{pelleg2000x}.
It is worth mentioning that the variants of K-means such as Fuzzy K-means~\cite{wu02PR} suffer from the same two issues.

The K-means algorithm was initially introduced for non-distributed setups, but decentralized versions of the algorithm have also been proposed~\cite{datta09KDE,di11DMW,fellus13dDMW}. 
In a decentralized setup, each of the $N$ sensors initially observes one single vector that correspond to its own measurements.
Then, in order to apply the decentralized K-means algorithm, the sensors are allowed to exchange some data with each other. 
The objective of decentralized clustering is thus to allow each sensor to perform the clustering with only partial observations of the available data, while minimizing the amount of data exchanged in the network. 
As a result, in this context, it is not desirable to initialize the algorithm with the K-means++ procedure that would require exchanging all the two by two distances between all the measurement vectors. 
It is not desirable either to perform the distributed algorithm several times in order to determine the number of clusters. 

The above limitations have not been addressed in the previous works~\cite{datta09KDE,di11DMW,fellus13dDMW}, and the objective of this paper is thus to propose a decentralized clustering algorithm that overcomes them.
At first sight, the algorithms DB-SCAN~\cite{ester1996density} and OPTICS~\cite{ankerst1999optics} may appear as good candidates for decentralized clustering since they do need the number of clusters and since they do not have any initialization issues. However, they require setting two parameters that are the maximum distance between two points in a cluster and the minimum number of points per cluster. These parameters can hardly be estimated and they must be chosen empirically, which we would like to avoid. This is why we do not consider these solutions here. 

The K-means algorithm makes no assumption on the signal model of the measurement vectors that belong to a cluster, which is relevant for applications such as document classification~\cite{steinbach2000comparison} information retrieval, or categorical data clustering~\cite{huang1998extensions}. 
On the other hand, signal processing methods usually assume a statistical model on the measurements. This model can be derived, for example, from physical characteristics of the sensors.
In this respect, centralized and decentralized model-based clustering algorithms were proposed in~\cite{bouveyron14CSDA,forero11SP}, although they suffer from the same two issues as the K-means algorithms.
In the model-based clustering {algorithms} proposed in~\cite{bouveyron14CSDA,forero11SP}, the measurement vectors that belong to a given cluster are modeled as the cluster centroid plus Gaussian noise, and it is assumed that both the cluster centroid and the noise variance are unknown. 
However, the noise variance may be estimated, possibly from preliminary measurements, via a bunch of parametric, non-parametric, and robust methods (see \cite{Huber2009, Rousseeuw93, Pastor2012a}, among others).
Therefore, here, we will consider the same Gaussian model as in~\cite{bouveyron14CSDA, forero11SP}, but we will assume that the noise variance is known.
This assumption was already made for clustering in~\cite{zhou16ICC} and~\cite{wu02PR} in order to choose the parameters for the functions that compute the cluster centroids.

In what follows, under the assumption that the noise variance is known, we propose a novel clustering algorithm which does not require prior knowledge of the number of clusters and which is much less sensitive to initialization than the K-means algorithm.
The centralized and decentralized versions of the algorithm we propose are both based on the robust estimation of the cluster centroids and on the testing of whether a given measurement vector belongs to a given cluster. Whence the names \algo~--- for CENtroids Testing and Robust Estimation --- and \dalgo, respectively given to the centralized and the decentralized algorithm.

In both algorithms, the cluster centroids are estimated one after the other via robust M-estimation~\cite{zoubir12robust}, assuming that the measurement vectors from the other clusters are outliers. 
In order to estimate the centroids, M-estimation looks for the fixed points of a function whose expression depend on a score function applied to all the measurement vectors of the database.
The score function we choose is the p-value of the Wald {hypothesis} test for testing the mean of a Gaussian~\cite{Wald1943} and evaluates the plausibility that a measurement vector belongs to a given cluster.
M-estimation was already used in~\cite{wu02PR} to estimate the cluster centroids, with a different score function. 
In~\cite{wu02PR}, the robustness of the centroid estimation was evaluated from the standard M-estimation approach, which shows that an outlier of infinite amplitude gives only a finite estimation error. 
Here, we propose an alternative analysis that, unlike \cite{wu02PR}, takes into account the fact that the outliers are measurement vectors from other clusters. 
Our asymptotic analysis shows that the only fixed points of our M-estimation function are the true cluster centroids, which validates our approach. 
We also derive the statistics of the estimation error for a finite number of measurement vectors.

In our algorithm, for each centroid to be estimated, the iterative computation of one of the fixed points of the M-estimation function is simply initialized with one of the measurement vectors of the dataset. 
The iterative computation then retrieves the centroid of the cluster to which the initialization point belongs. 
After each centroid estimation, a Wald hypothesis test is applied to mark all the measurement vectors that must not be used later for initializing the estimation of any other centroid because they are already close enough to the newly estimated one.
This very simple {marking operation} avoids using the K-means++ solution. Further, the estimation process stops when all the measurement vectors have been marked, which permits to determine the number of clusters.
The final clustering is standardly performed by seeking the estimated centroid that is the closest to a given observation.
Our simulation results show that both \algo~and \dalgo~achieve performance close to the K-means algorithm initialized with the proper number of clusters. 

The outline of the paper is as follows. Section~\ref{sec:model} describes the signal model we consider for the measurement vectors.
Section~\ref{sec:theory} details the theoretical framework, which involves the derivation of all the functions and hypothesis tests used in the successive steps of the algorithm.
Section~\ref{sec:algo} describes the centralized and decentralized versions of the clustering algorithm we propose. 
Section~\ref{sec:experiments} shows the simulation results and Section~\ref{sec:conclusion} gives our conclusions and perspectives for future works.

\section{Signal model}\label{sec:model}
This section describes the signal model and the notation used throughout the paper. 
Consider a network of $N$ sensors in which each sensor $n \in \{1,\cdots,N\}$ observes a measurement vector $\obs_n$. 
The vectors $\obs_1, \ldots, \obs_N$ are assumed to be
$N$ independent $\dim$-dimensional random Gaussian vectors. 
The individual random components of each random vector $\Obs_n$ are assumed to be independent and identically distributed (i.i.d.).
Note that this i.i.d. simplyfing assumption is considered here as a fist step to introduce the analysis and more accurate models will be considered in future works {(for example, observations with different variances, see conclusion for more details)}. 
To alleviate notation in upcoming computations, we conveniently assume that the covariance matrices of the observations $\Obs_n$ are normalized so as to all equal the $\dim \times \dim$ identity matrix $\Id$. 
This normalization requires prior knowledge of the noise variance, which may be known or estimated by various parametric and nonparametric methods,
as mentioned in the introduction.

We further assume that the measurement vectors are split into $K$ clusters defined by $K$ centroids $\Ctr_1, \ldots, \Ctr_K$, with $\Ctr_k \in \Rset^\dim$ for each $k = 1, 2, \ldots, K$. 
Therefore, according to the foregoing assumption on the noise measurement, we assume that for each $n \in \{ 1, \ldots, N \}$, there exists $k \in \{1, 2, \ldots, K\}$ such that $\Obs_n \thicksim \Ncal(\Ctr_k, \Id)$ and we say that $\obs_n$ belongs to cluster $k$. 
For each $k$, we denote by $N_k$ the number of measurement vectors in cluster $k$. We also use the notation $\Obs_{k,1},\Obs_{k,2}, \ldots, \Obs_{k,N_k}$ to designate the $N_k$ observations $\Obs_n$ that belong to a given cluster $k$.
In the following, we assume that the number of clusters $K$, the centroids $\Ctr_1, \ldots, \Ctr_K$, and the number of measurement vectors in each cluster $N_1,\cdots, N_K$ are unknown. 
The objective of the clustering algorithm we propose is to estimate these quantities and also to determine to which cluster each measurement vector $\obs_n$ belongs.

\section{Theoretical framework}\label{sec:theory}
In our algorithm, the centroids are estimated one after each other by a robust M-estimation approach. As described in~\cite{zoubir12robust} {and references therein}, the M-estimation involves searching the fixed points of a given function.
When the algorithm has estimated a new centroid, it identifies and marks the observations $\obs_n$ that are too close to the newly estimated cluster to be used later for estimating other centroids. 
For this, we apply a Wald hypothesis test~\cite{Wald1943} that decides whether a given vector must be marked or not.

In this section, we introduce the theoretical framework on which rely the different parts of the algorithm. In particular, we present the M-estimation of the centroids and the hypothesis test used to build up the clusters.
As described in the following, the function we use for M-estimation is deduced from Wald's test. We further analytically show that the centroids are the only fixed-points of this function, which justifies that our algorithm can successfully recover the clusters. 

\subsection{The Wald test and its p-value for testing the mean of a Gaussian} 
\label{Subsec: Wald}

{Wald tests are optimal in a specific sense defined in~\cite[Definition III, p. 450]{Wald1943} to test the mean of a Gaussian \cite[Proposition III, p. 450]{Wald1943}. With respect to the model assumptions introduced in Section~\ref{sec:model}, Wald tests are hereafter exposed where the Gaussian has identity scale covariance matrix. In the sequel, Wald tests will serve: (i) to define the function we use for M-estimation of the centroids, (ii) to mark the measurement vectors that are close to the estimated centroids. 
Here, we first describe the test in a generic way and we will apply it later in the section for the two purposes recalled above.

Let $\Zbm$ be a real $\dim$-dimensional random vector such that $\Zbm \thicksim \Ncal(\xibm,\sigma_0^2 \Id)$ with $\sigma_0 \neq 0$. Consider the problem of testing the mean of the Gaussian vector $\Zbm$, that is the problem of testing the null hypothesis $\Hcal_0: \xibm = 0$ against its alternative $\Hcal_1: \xibm \neq 0$. This problem is summarized as:
\begin{equation}
\label{Eq:ddtpb}
\left \{
\begin{array}{lll}
\text{{Observation:}} \, \Zbm \thicksim \Ncal(\xibm,\sigma_0^2 \Id), \\
\text{Hypotheses:} \,
\left \{
\begin{array}{lll}
\Hcal_0: \, \xibm= 0, \\
\Hcal_1: \, \xibm \neq 0.
\end{array}
\right.
\end{array}
\right.
\end{equation}

Recall that a non-randomized test is any (measurable) map of $\Rset^\dim$ to $\{0,1\}$ and that, given some test $\test$ and $\zbm \in \Rset^\dim$, the value $\test(\zbm)$ is the index of the hypothesis accepted by $\test$ at $\zbm$. For instance, if $\test(\zbm) = 0$ (resp. $\test(\zbm) = 1$), $\test$ accepts $\Hcal_1$ (resp. $\Hcal_0$) when $\zbm$ is the realization of $\Zbm$. We can then devise easily a non-randomized test that guarantees a given false alarm probability $\level \in (0,1)$ for testing $\Hcal_0$ against $\Hcal_1$. Indeed, for any given $\lambda \in [0,\infty)$, let $\test_\lambda$ be the test defined for any $\zbm \in \Rset^\dim$ by setting:
\begin{equation}
\label{Eq:Thresholding test from above}
\test_\lambda(\zbm) = \left \{
\begin{array}{lll}
0 & \hbox{ if } & \| \zbm \| \leqslant \lambda \\
1 & \hbox{ if } & \| \zbm \| > \lambda.
\end{array}
\right.
\end{equation}
where $\| \mycdot \|$ denotes the usual Euclidean norm in $\Rset^\dim$. This test accepts $\Hcal_0$ (resp. $\Hcal_1$) if $\| \zbm \| \leqslant \lambda$ (resp. $\| \zbm \| > \lambda$). According to the definition of the Generalized Marcum Function $\Marcum(\mycdot,\mycdot)$ \cite[Eq. (8)]{Sun2010}, the false alarm probability of this test is $\Proba{\| \Zbm \|^2 > \lambda^2} = 1 - \Fbb_{\chi^2_d(0)}(\lambda^2/\sigma_0^2)  = \Marcum(0,\lambda/\sigma_0)$ where $\Zbm \thicksim \Ncal(0,\sigma_0^2 \Id)$ and $\Fbb_{\chi^2_\dim(0)}$ is the cumulative distribution function (CDF) of the centered $\chi^2_\dim$ distribution with $\dim$ degrees of freedom. {Throughout {the paper}, we denote by $\mu(\level)$ the unique real value such that:
\begin{equation}
\label{Eq: normalized threshold}
\Marcum(0,\mu(\level)) = \level
\end{equation}}
It then follows that the false alarm probability of the test $\Topt{\sigma_0}$ equates the desired value $\level$. 

Although there is no Uniformly Most Powerful (UMP) test for the composite binary hypothesis testing problem (\ref{Eq:ddtpb}) \cite[Sec. 3.7]{Lehmann}, $\Topt{\sigma_0}$ turns out to be optimal with respect to several optimality criteria and within several classes of tests with level $\level$ \cite[Proposition 2]{RDT}. In particular, $\Topt{\sigma_0}$ is UMP among all spherically invariant tests since it has Uniformly Best Constant Power (UBCP) on the spheres centered at the origin of $\Rset^\dim$ \cite[Definition III \& Proposition III, p. 450]{Wald1943}. In the sequel, any test $\Topt{\sigma_0}$ will be called a Wald test, without recalling explicitly the level $\gamma$ at which the testing is performed. 


It turns out that a notion of p-value can be defined for the family of Wald tests $\test_{\sigma_0 \mu(\level)}$ when $\level$ ranges in $(0,1)$. This p-value is calculated in Appendix \ref{Sec: RDT pvalue} and, for the testing problem (\ref{Eq:ddtpb}), it is given for any $\zbm \in \Rset^\dim$ by:
\begin{equation}
\label{RDT-pvalue}
\pval{\zbm}{\sigma_0} = \Marcum(0, \| \zbm \| / \sigma_0).
\end{equation}
The p-value can be seen as a measure of the plausibility of the null hypothesis $\Hcal_0$. In particular, when $\| \zbm \|$ tends to $+\infty$, $\pval{\zbm}{\sigma_0}$ tends to $0$ since $\displaystyle \lim_{t \to \infty} \Marcum(0,t) = \displaystyle \lim_{t \to \infty} ( 1 - \Fbb_{\chi^2_d(0)}(t^2)) = 0$. It is then natural to consider that the plausibility of $\Hcal_0$ vanishes as $\| \zbm \|$ grows to $+\infty$. Similarly, $\pval{\zbm}{\sigma_0}$ tends to $1$ when $\| \zbm \|$ tends to $0$, so that the plausibility of $\Hcal_0$ is rather high, close to $1$, for small values of $\| \zbm \|$. Accordingly, we describe the M-estimation of the centroids and show how the Wald test and its p-value help us choose the score function that will be used in the $M$ estimation.

\vspace{-0.3cm}

\subsection{M-estimation of the centroids}
\label{sec:mestimator}

As in \cite{wu02PR}, we want to estimate the centroid $\Ctr_k$ of a given cluster with an M-estimator, which amounts to considering that the measurement vectors from other clusters are outliers. More specifically, if the number $K$ of clusters were known, robust estimation theory \cite{Hampel74, Hampel86, Vaart1998, Huber2009, zoubir12robust} applied to the problem of estimating the centroids $\Ctr_1,\cdots,\Ctr_K$ would lead to calculating the solutions $\estCtr_1, \ldots, \estCtr_K$ of the $K$ equations:
\begin{equation}
\label{Eq:argmin rho}
(\estCtr_1, \ldots, \estCtr_K) = \displaystyle \underset{(\Ctr_1, \ldots, \Ctr_K)}{\arg} \min J(\Ctr_1,\ldots,\Ctr_K),
\vspace{-0.25cm}
\end{equation}
where $J(\Ctr_1,\ldots,\Ctr_K) = \sum_{k=1}^K \sum_{n = 1}^{N_k} \rho \left ( \| \obs_{k,n}-\Ctr_k \|^2 \right )$
and the loss function $\rho: \Rset \rightarrow \Rset$ is an increasing function. 
If the loss function $\rho$ is differentiable, the solution to (\ref{Eq:argmin rho}) can be found by solving the $K$ equations
\begin{align}
\partial_k J(\Ctr_1, \ldots, \Ctr_K) 
& = \displaystyle \sum_{n = 1}^{N_k} (\obs_{k,n}-\Ctr_k) w ( \| \obs_{k,n}-\Ctr_k \|^2 ) \notag \\
\label{Eq:psi = 0}
& = 2 \displaystyle \sum_{n = 1}^{N_k} \Psi(\obs_{k,n}-\Ctr_k) = 0
\end{align}
for $k = 1, \ldots, K$, where $\partial_k J$ is the partial derivate of $J$ with respect to its $k$th argument, $w = \rho'$ and $\Psi: \Rset^\dim \rightarrow \Rset^\dim$ is defined for every $\mathbf{x} \in \Rset^\dim$ by $\Psi(\mathbf{x}) = \mathbf{x} \, w ( \| x \|^2 )$. The function $\Psi$ is called the score function, and it is non negative since $\rho$ increases. Rewriting (\ref{Eq:psi = 0}) for each given $k \in \llbracket 1, K\rrbracket$ gives that the solution $\estCtr_k$ of this equation must verify:
\begin{equation}
\label{Eq:estCtr}
\estCtr_k =  \dfrac{\sum_{n = 1}^{N_k} w ( \| \obs_{k,n}-\estCtr_k \|^2 ) \obs_{k,n}}{\sum_{n = 1}^{N_k} w ( \| \obs_{k,n}-\estCtr_k \|^2 )},
\end{equation}
where $w$ is henceforth called the weight function. In other words, the estimate $\estCtr_k$ is a fixed point of the function
\begin{equation}
\label{Eq:gk}
g_k(\xbm) = \dfrac{\sum_{n = 1}^{N_k} w ( \| \obs_{k,n}-\xbm \|^2 )  \obs_{k,n}}{\sum_{n = 1}^{N_k} w ( \| \obs_{k,n}-\xbm \|^2 ) }, \xbm \in \Rset^\dim
\end{equation}
The computation of the fixed points of this function can be carried out by iterative algorithms described in~\cite{zoubir12robust}. 

Unfortunately, in our context, we cannot perform the M-estimation of the centroids by seeking the fixed points of each $g_k$ defined by \eqref{Eq:gk} since the number $K$ of clusters is unknown. 
However, M-estimation is supposed to be robust to outliers and, when estimating a centroid $\Ctr_k$, the measurement vectors from other clusters may be seen as outliers. 
Consequently, we can expect that if $\estCtr_k$ is a fixed point of $g_k$, then it should also be a fixed point of the function:
\begin{equation}
\label{Eq:h}
\TheFunc(\xbm) = \dfrac{\sum_{n = 1}^{N} w ( \| \obs_n-\xbm \|^2 )  \obs_n}{\sum_{n = 1}^{N} w ( \| \obs_n-\xbm \|^2 ) }, \xbm \in \Rset^\dim,
\end{equation}
where the sums involve now all the measurement vectors $\obs_n$. 
The rationale is that the contributions in $\TheFunc$ of the measurement vectors sufficiently remote from a fixed point $\estCtr_k$ should significantly be lowered by the weight function $w$, given that this weight function is robust to outliers as discussed later in the paper. 
Note that the function $h_N$ was also considered in\cite{wu02PR} for the estimation of the centroids, even though the number of clusters was assumed to be known.

At the end, the key-point for robustness to outliers of an M-estimator is the choice of the weight function $w$. ``{\em Usually, for robust M-estimators, the weights are chosen close to one for the bulk of the data, while outliers are increasingly downweighted.}'' \cite{zoubir12robust}. This standard rationale leads the authors in \cite{wu02PR} to choose the Gaussian kernel $w(\xbm) = \exp \left ( - \beta \| \xbm \|^2 \right )$, in which the parameter $\beta$ is homogeneous to a noise variance. When $d=2$, it is experimentally shown in\cite{wu02PR} that the parameter $\beta$ should be set to $1/ \std^2$. However, when $\beta > 2$, no such result exist and the value of $\beta$ must be chosen empirically (the best value of $\beta$ is usually different from $1/\std^2$).\color{black} 

\subsection{Wald p-value kernel for M-estimation}
\label{subsec: pvalue}

Here, in order to avoid the empirical choice of the parameter $\beta$, we alternatively derive our weight function from a Wald hypothesis test on the measurement vectors.

Consider the problem of testing whether two random vectors $\Obs_i$ and $\Obs_j$ ($i \neq j$) belong to the same cluster. 
This problem can be formulated as testing whether $\Ctr_i = \Ctr_j$ or not, given $\Obs_i \thicksim \Ncal(\Ctr_i,\Id)$ and $\Obs_j \thicksim \Ncal(\Ctr_j,\Id)$.
The independence  of $\Obs_i$ and $\Obs_j$ implies that $\Obs_i - \Obs_j \thicksim \Ncal(\Ctr_i - \Ctr_j, 2 \Id)$. 
Therefore, testing $\Ctr_i = \Ctr_j$ amounts to testing whether the mean of the Gaussian random vector $\Obs_i - \Obs_j$ is $0$. This is Problem (\ref{Eq:ddtpb}) with $\sigma_0 = \sqrt{2}$.

The p-value~\eqref{RDT-pvalue} of the Wald test $\Topt{}$ basically measures the plausibility that $\Obs_i$ and $\Obs_j$ belong to the same cluster. For testing the mean of $\Obs_i - \Obs_j \thicksim \Ncal(\Ctr_i - \Ctr_j, 2 \Id)$, the family of Wald tests is $\big \{ \Topt{\sqrt{2}}: \level \in (0,1) \big \}$. The p-value associated with this family of tests follows from (\ref{RDT-pvalue}) and is equal to $\thepval{\zbm} = \Marcum(0, \| \zbm \| / \sqrt{2})$ for any $\zbm \in \Rset^\dim$.
When the p-value $\thepval{\Obs_i - \Obs_j} = \Marcum(0, \| \Obs_i - \Obs_j  \| / \sqrt{2})$ is low, the null hypothesis should be rejected, that is, the two observations should be regarded as emanating from two different clusters. 
In contrast, when this p-value is large, the null hypothesis should be accepted, that is, the two observations should be considered as elements of the same clusters. 
This p-value thus basically satisfies the fundamental requirement for a weight function in robust M-estimation. Thence the idea to put $w( \| \Obs_n -  \xbm \|^2) = \thepval{\Obs_n -  \xbm} = \Marcum(0,\| \Obs_n -  \xbm \| / \sqrt{2})$ in the expression of $\TheFunc$ given by Eq. (\ref{Eq:h}). Accordingly, the weight function for the M-estimation of the centroids is thus defined as:
\begin{equation}
\label{Eq: our weight}
w(u) = \Marcum( 0,\sqrt{u /2} )
\end{equation}
for any $u \in \Rset$, so that $w(\| \zbm \|^2) = \Marcum(0, \| \zbm \|/\sqrt{2})$ for all $\zbm \in \Rset^\dim$. Because of its derivation, this weight function is hereafter called Wald p-value kernel. It follows from this choice that $\rho(u) = \int_0^u \Marcum(\tol,\sqrt{t /2}) \dt + C$ where $C$ is a constant and, as a consequence, the score function is given for any $\xbm \in \Rset^\dim$ by:
\begin{equation}
\label{Eq: Our Psi}
\Psi(\xbm) = \xbm \, \Marcum( 0, \| \xbm \| /\sqrt{2}) .
\end{equation}

From an M-estimation point of view, the relevance of the Wald p-value kernel $w$ can be further emphasized by calculating and analyzing the influence function of the M-estimator (\ref{Eq:estCtr}). 
This analysis is carried out in Appendix \ref{App: robustness} and shows that the weight function given by (\ref{Eq: our weight}) is robust to outliers.
However, this analysis does not seem to be sufficient for the clustering problem in which outliers may be numerous since they are in fact measurement vectors belonging to other clusters. 
At this stage, the question (which is not answered in~\cite{wu02PR}, even for the Gaussian kernel) is thus whether the centroids can still be expected to be fixed-points of the function $\TheFunc$ defined in~\eqref{Eq:h}. The next section brings an asymptotic answer to this question.

%

\subsection{Fixed points analysis}
\label{subsec: fixed point analysis}

The objective of this section is to support our choice of the Wald p-value kernel $w$ and to show that the centroids can be estimated by seeking the fixed-points of $\TheFunc$.
For this, the following proposition proves that the centroids $\Ctr_1,\cdots, \Ctr_K$ are the fixed-points of $\TheFunc$ when the number of measurement vectors asymptotically increases and when the centroids are asymptotically far away from each other. 
\begin{Proposition}
\label{Prop: fixed points}
Let $\Ctr_1, \ldots, \Ctr_K$  be $K$ pairwise different elements of $\Rset^\dim$. For each $k \in \llbracket 1, K \rrbracket$, suppose that $\Obs_{k,1}, \ldots, \Obs_{k,N_k} \stackrel{\text{iid}}{\thicksim} \Ncal(\Ctr_k,\Id)$ are $N_k$ independent random vectors and set $N = \sum_{k=1}^K N_k$. If there exist $\alpha_1, \ldots, \alpha_K \in (0,1)$ such that $\lim\limits_{N \to \infty} N_k/N = \alpha_k$, then, for any $i \in \llbracket 1, K \rrbracket$ and any $\Ctr$ in a neighborhood of $\Ctr_i$,
\begin{equation}\nonumber
\label{Eq: Asymptotic behavior of gN(theta)-theta-2}
\displaystyle \lim_{\forall k \ne i, \| \Ctr_k - \Ctr_i \| \rightarrow \infty}
\left ( \, 
\displaystyle \lim_{N \to \infty}
\big ( \, \TheFunc(\Ctr) - \Ctr \, \big ) 
\, \right ) = 0 
\quad \text{iff} \quad \Ctr = \Ctr_i
\end{equation}
\end{Proposition}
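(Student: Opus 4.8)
The plan is to resolve the nested limits in the order they appear: first pass $N\to\infty$ by the law of large numbers, then pass to the separation limit, then carry out a local fixed‑point analysis of the resulting single‑cluster expression.

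\textbf{Step 1: pass $N\to\infty$ by the law of large numbers.} Splitting the sums defining $\TheFunc(\Ctr)$ over the $K$ clusters, writing $\tfrac1N\sum_{n=1}^{N_k}w(\|\Obs_{k,n}-\Ctr\|^2)\Obs_{k,n}=\tfrac{N_k}{N}\cdot\tfrac1{N_k}\sum_{n=1}^{N_k}w(\|\Obs_{k,n}-\Ctr\|^2)\Obs_{k,n}$ (and likewise for the denominator), and using that $0\le w\le1$ together with $\Exp{\|\Obs\|}<\infty$, the strong law applies inside each cluster; with $N_k/N\to\alpha_k>0$ (so $N_k\to\infty$) this gives, almost surely, the deterministic limit
\[
\lim_{N\to\infty}\TheFunc(\Ctr)=\frac{\sum_{k=1}^K\alpha_k\,\psi_k(\Ctr)}{\sum_{k=1}^K\alpha_k\,D_k(\Ctr)},\qquad \psi_k(\Ctr)=\Exp{w(\|\Obs-\Ctr\|^2)\Obs},\quad D_k(\Ctr)=\Exp{w(\|\Obs-\Ctr\|^2)},
\]
where $\Obs\thicksim\Ncal(\Ctr_k,\Id)$; the denominator is positive because $w>0$ everywhere.

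\textbf{Step 2: pass to the separation limit.} Fix $i$, let $\Ctr$ lie in a bounded neighborhood $V$ of $\Ctr_i$, and take $\|\Ctr_k-\Ctr_i\|\to\infty$ for every $k\ne i$. Substituting $\Obs=\Ctr_k+\Zbm$ with $\Zbm\thicksim\Ncal(0,\Id)$, one has $D_k(\Ctr)=\Exp{w(\|\Zbm+\Ctr_k-\Ctr\|^2)}$; since $\|\Ctr_k-\Ctr\|\to\infty$ and $w$ is bounded with $w(u)\to0$ as $u\to+\infty$, dominated convergence gives $D_k(\Ctr)\to0$. For the numerator, $\psi_k(\Ctr)=\Exp{\Psi(\Zbm+\Ctr_k-\Ctr)}+\Ctr\,D_k(\Ctr)$ with $\Psi(\xbm)=\xbm\,w(\|\xbm\|^2)$ the score function: the second term vanishes with $D_k(\Ctr)$, and the first is bounded in norm by $\Exp{\|\Zbm+\Ctr_k-\Ctr\|\,w(\|\Zbm+\Ctr_k-\Ctr\|^2)}$, which tends to $0$ by dominated convergence because $u\mapsto\sqrt u\,w(u)=\sqrt u\,\Marcum(0,\sqrt{u/2})$ is bounded on $[0,\infty)$ and vanishes at $+\infty$ (the $\chi^2_\dim$ tail decays exponentially). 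As $D_i(\Ctr)>0$ is unaffected by the separation, the quotient limit is legitimate and, for every $\Ctr\in V$,
\[
\lim_{\forall k\ne i,\ \|\Ctr_k-\Ctr_i\|\to\infty}\ \lim_{N\to\infty}\big(\,\TheFunc(\Ctr)-\Ctr\,\big)=\frac{\psi_i(\Ctr)}{D_i(\Ctr)}-\Ctr=:G_i(\Ctr)-\Ctr .
\]

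\textbf{Step 3: local fixed‑point analysis of $G_i$.} Set $\delta=\Ctr-\Ctr_i$ and $\Obs=\Ctr_i+\Zbm$, so that $\Obs-\Ctr=\Zbm-\delta$ and $G_i(\Ctr_i+\delta)-(\Ctr_i+\delta)=F(\delta)/D_i(\Ctr_i+\delta)$ with $F(\delta):=\Exp{\Psi(\Zbm-\delta)}$; since the denominator is positive, $G_i(\Ctr)=\Ctr$ is equivalent to $F(\delta)=0$. The ``if'' direction is immediate: $F(0)=\Exp{\Zbm\,w(\|\Zbm\|^2)}=0$ by the reflection symmetry $\Zbm_j\mapsto-\Zbm_j$, so $\Ctr_i$ is a fixed point. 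For the ``only if'' direction, write $F(\delta)=\int\Psi(\zbm)\,\phi(\zbm+\delta)\,\d\zbm$ with $\phi$ the standard Gaussian density; differentiating under the integral (valid on bounded $\delta$‑sets, since $w\le1$ and Gaussian tails dominate the polynomial factors) shows $F$ is smooth with $\partial_{\delta_k}F_j(0)=-\Exp{\Zbm_j\Zbm_k\,w(\|\Zbm\|^2)}$, which is $0$ for $j\ne k$ and $-\Exp{\Zbm_1^2\,w(\|\Zbm\|^2)}$ for $j=k$. Hence $\nabla F(0)=-c\,\Id$ with $c=\Exp{\Zbm_1^2\,w(\|\Zbm\|^2)}\in(0,1)$, which is invertible; by the inverse function theorem $F$ is injective on a neighborhood of $0$, so $F(\delta)\ne0$ for all $\delta\ne0$ there. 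Shrinking $V$ to this neighborhood yields $G_i(\Ctr)-\Ctr=0\iff\Ctr=\Ctr_i$ on $V$, which is the stated equivalence.

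The step I expect to be the main obstacle is Step~2: one must confirm that the distant clusters contribute nothing even though $\|\Ctr_k\|\to\infty$ sits inside the numerator $\psi_k$, and this is exactly where the exponential decay of the Generalized Marcum function — equivalently, the boundedness and vanishing at $+\infty$ of $u\mapsto\sqrt u\,w(u)$ — is indispensable rather than cosmetic. A secondary subtlety, already reflected in the wording ``$\Ctr$ in a neighborhood of $\Ctr_i$'', is that the uniqueness of the centroid among the fixed points can only be asserted \emph{locally}; the inverse function theorem delivers precisely this, once $F$ is shown to be a local diffeomorphism at the origin.
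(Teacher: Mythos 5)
Your argument is correct, and Steps 1--2 coincide with the paper's own proof: the paper likewise splits the sums cluster by cluster, applies the strong law of large numbers, and then eliminates the remote clusters by dominated convergence (its Lemma~\ref{Lemma: asymptotic behaviors}, statements (ii) and (iii)); your observation that $t\mapsto t\,\Marcum(0,t/\sqrt{2})$ is bounded and vanishes at infinity is exactly the estimate the paper establishes in Appendix~\ref{App: robustness} and implicitly reuses there. The genuine divergence is in Step~3. Where you linearize $F(\delta)=\Expect{\Psi(\Zbm-\delta)}$ at $\delta=0$, find the Jacobian $-c\,\Id$ with $c=\Expect{Z_1^2\,w(\|\Zbm\|^2)}>0$, and invoke the inverse function theorem, the paper proves a \emph{global} statement (Lemma~\ref{Lemma: last useful lemma}): for any non-null, continuous, coordinate-even $\fbm\geqslant 0$ and $\Zbm\thicksim\Ncal(\xibm,\Id)$, one has $\Expect{\fbm(\Zbm)\Zbm}=0$ if and only if $\xibm=0$; its proof reduces to one dimension via Fubini, folds the integral about the origin, and exploits the sign of $e^{\xi t}-e^{-\xi t}$. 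The trade-off is this: the paper's lemma shows the limiting functional has no spurious zero for $\Ctr$ ranging over an \emph{arbitrary} bounded neighborhood of $\Ctr_i$ (the neighborhood is needed only to keep $\|\Ctr-\Ctr_i\|$ bounded while the other centroids recede), which is the property one actually wants when the fixed-point iteration is initialized at a raw data point of the cluster; your inverse-function-theorem argument excludes spurious fixed points only in some possibly small, unquantified neighborhood --- enough for the literal wording ``in a neighborhood of $\Ctr_i$'', but strictly weaker. In exchange, your explicit Jacobian $-c\,\Id$ is information the paper does not extract and would be the natural starting point for a local convergence-rate analysis of the iteration $\widehat{\Ctr}^{(\ell+1)}=\TheFunc(\widehat{\Ctr}^{(\ell)})$.
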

\begin{IEEEproof}
For any $k \in \llbracket 1, K \rrbracket$ and any $n \in \llbracket 1, N_k \rrbracket$, set $\Noise_{k,n} = \Obs_{k,n} - \Ctr_k$, so that:
$\Noise_{k,1}, \ldots, \Noise_{k,N_k} \stackrel{\text{iid}}{\thicksim} \Ncal(0,\Id).$
For each $k \in \llbracket 1, K \rrbracket$, we also set $\alpha_{k,N} = N_k/N$. 
We therefore have $\sum_{k = 1}^K \alpha_k = 1$. The random function (\ref{Eq:h}) can then be rewritten as:
\begin{equation}
\label{Eq:gk rewritten}
\TheFunc(\thetabm) =  \dfrac{U_N(\thetabm)}{V_N(\thetabm)} \quad (\thetabm \in \Rset^\dim)
\end{equation}
with
\begin{onecol}
\begin{equation}
\label{Eq: U et V}
\left \{
\begin{array}{lll}
U_N(\thetabm) = \displaystyle \sum_{k=1}^K \sum_{n = 1}^{N_k} w ( \| \Obs_{k,n}-\thetabm \|^2 )  \Obs_{k,n} \myvspace 
\\
V_N(\thetabm) = \displaystyle \sum_{k=1}^K \sum_{n = 1}^{N_k} w ( \| \Obs_{k,n}-\thetabm \|^2 ).
\end{array}
\right.
\end{equation}
\end{onecol}
\begin{twocol}
\begin{equation}
\label{Eq: U et V}
\left \{
\begin{array}{lll}
U_N(\thetabm) = \displaystyle \sum_{k=1}^K \sum_{n = 1}^{N_k} w ( \| \Obs_{k,n}-\thetabm \|^2 )  \Obs_{k,n} 
\\
V_N(\thetabm) = \displaystyle \sum_{k=1}^K \sum_{n = 1}^{N_k} w ( \| \Obs_{k,n}-\thetabm \|^2 ).
\end{array}
\right.
\end{equation}
\end{twocol}
We then have:
$$
\TheFunc(\thetabm) - \thetabm = \dfrac{W_N(\thetabm)}{V_N(\thetabm)}
$$
with $W_N(\thetabm) = U_N(\thetabm) - V_N(\thetabm) \thetabm$. By setting $\dev_k = \Ctr_k - \thetabm$ for $k \in \llbracket 1,N_k \rrbracket$, we can write:
\vspace{-0.25cm}
\begin{equation}
\label{Eq: WN/N}
\dfrac{1}{N} W_N(\thetabm) = \displaystyle \sum_{k=1}^K \alpha_{k,N} \frac{1}{N_k}\displaystyle \sum_{n=1}^{N_k} w ( \| \dev_k + \Noise_{k,n} \|^2 ) \left ( \dev_k + \Noise_{k,n} \right )
\end{equation}
In the same way:
\vspace{-0.25cm}
\begin{equation}
\label{Eq: VN/N}
\dfrac{1}{N} V_N(\thetabm) = \displaystyle \sum_{k=1}^K \alpha_{k,N} \frac{1}{N_k}\displaystyle \sum_{n=1}^{N_k} w ( \| \dev_k + \Noise_{k,n} \|^2 )
\end{equation}
By the strong law of large numbers, it follows from \eqref{Eq: WN/N} and \eqref{Eq: VN/N} that:
\begin{onecol}
\begin{equation}\nonumber
\label{Eq: Asymptotic behavior of gN(theta)-theta-2}
\displaystyle \lim_{N \to \infty} \left (\TheFunc(\Ctr) - \Ctr \right ) = 
\dfrac{\alpha_{i} \, \Expect{w ( \| \VZ{\dev_i} \|^2 ) \VZ{\dev_i} } + \displaystyle \sum_{k=1, k \ne i}^K \alpha_{k} \, \Expect{w ( \| \VZ{\dev_k} \|^2 ) \VZ{\dev_k} }}{\alpha_{i} \, \Expect{w ( \| \VZ{\dev_i} \|^2 ) } + \displaystyle \sum_{k=1, k \ne i}^K \alpha_{k} \, \Expect{w ( \| \VZ{\dev_k} \|^2 )}} \quad \text{(a-s)}
\end{equation}
with $\VZ{\dev_k} = \dev_k + \Noise$ for each $k \in \llbracket 1, K \rrbracket$ and $\Noise \thicksim \Ncal(0,\Id)$. If $\Ctr$ is in a neighborhood of $\Ctr_i$, there exists some positive real number $\varepsilon > 0$ such that $\| \dev_i \| \leqslant \varepsilon$. According to Lemma \ref{Lemma: asymptotic behaviors} stated and proved in Appendix \ref{App: asymptotic behaviors}, 
$$
\displaystyle \lim_{\forall k \ne i, \| \Ctr_k - \Ctr_i \| \rightarrow \infty}
\left ( \, 
\displaystyle \lim_{N \to \infty}
\big ( \, \TheFunc(\Ctr) - \Ctr \, \big ) 
\, \right ) = 
\dfrac{\Expect{w ( \| \VZ{\dev_i} \|^2 ) \VZ{\dev_i} }}{\Expect{w ( \| \VZ{\dev_i} \|^2 ) }} \quad \text{(a-s)}
\vspace{0.25cm}
$$
\end{onecol}
\begin{twocol}
\vspace{-0.25cm}
\begin{equation}\nonumber
\label{Eq: Asymptotic behavior of gN(theta)-theta-2}
\displaystyle \lim_{N \to \infty} \left (\TheFunc(\Ctr) - \Ctr \right ) = 
\dfrac{\alpha_{i} \, \Expect{w ( \| \VZ{\dev_i} \|^2 ) \VZ{\dev_i} } + \term_1}{\alpha_{i} \, \Expect{w ( \| \VZ{\dev_i} \|^2 ) } + \term_2} \, \, \text{(a-s)}
\end{equation}
with
\begin{align}
\nonumber
\term_1 & = \sum_{k=1, k \ne i}^K \alpha_{k} \, \Expect{w ( \| \VZ{\dev_k} \|^2 ) \VZ{\dev_k} } \\
\nonumber
\term_2 & = \sum_{k=1, k \ne i}^K \alpha_{k} \, \Expect{w ( \| \VZ{\dev_k} \|^2 )},
\end{align}
$\VZ{\dev_k} = \dev_k + \Noise$ for each $k \in \llbracket 1, K \rrbracket$ and $\Noise \thicksim \Ncal(0,\Id)$. If $\Ctr$ is in a neighborhood of $\Ctr_i$, there exists some positive real number $\varepsilon > 0$ such that $\| \dev_i \| \leqslant \varepsilon$. According to Lemma \ref{Lemma: asymptotic behaviors} stated and proved in Appendix \ref{App: asymptotic behaviors}, 
\begin{align}
\nonumber
\displaystyle \lim_{\forall k \ne i, \| \Ctr_k - \Ctr_i \| \rightarrow \infty}
&
\left ( \, 
\displaystyle \lim_{N \to \infty}
\big ( \, \TheFunc(\Ctr) - \Ctr \, \big ) 
\, \right ) \\
\nonumber
& = \dfrac{\Expect{w ( \| \VZ{\dev_i} \|^2 ) \VZ{\dev_i} }}{\Expect{w ( \| \VZ{\dev_i} \|^2 ) }} \quad \text{(a-s)}
\end{align}
\end{twocol}

Since $\Expect{w ( \| \VZ{\dev_i} \|^2 ) } > 0$, the left hand side (lhs) to the equality above is $0$ if and only if $\Expect{w ( \| \VZ{\dev_i} \|^2 ) \VZ{\dev_i} } = 0.$ The conclusion follows from Lemma \ref{Lemma: last useful lemma} in Appendix~\ref{App:D}. 
\end{IEEEproof}

The results of Proposition~\ref{Prop: fixed points} state that the centroids are the unique fixed points of the function $\TheFunc$, when the sample size and the distances between centroids tend to infinity. 
In \algo, an iterative procedure~\cite{zoubir12robust} is used to seek the fixed points of the function $\TheFunc$.
The fixed points are determined one after the other, and in order to find one fixed point, the iterative procedure is initialized with a measurement vector that has not been marked yet. 
The marking operation is applied after each centroid estimation and consists of applying a Wald test aimed at finding the measurement vectors that have the same mean as the newly estimated centroid, \pastorv{in the sense of Section~\ref{Subsec: Wald}}.
In order to define the Wald test that will be used to mark the measurement vectors, we need the statistical model of the estimated centroids. This statistical model is derived in the next section.


\vspace{-0.25cm}
\subsection{Fixed point statistical model and fusion}
\label{subsec: Statistical model and fusion}
In order to apply the Wald test for the marking operation, we need a model of the statistical behavior of the fixed points of $\TheFunc$. 
In practice, the test will be applied when the sample size and the distances between centroids may not be large enough to consider the asymptotic conditions of Proposition \ref{Prop: fixed points}. 
That is why we determine the statistical model of the fixed points from a rough estimate of their convergence rate to the centroids.


\label{subsubsec: Fixed point statistical model}
A fixed point of $\TheFunc$ provides us with an estimated centroid $\estCtr_k = \TheFunc(\estCtr_k)$ for some unknown centroid $\Ctr_k$. In order to model the estimation error, we can start by writing $\TheFunc(\estCtr_k) = \TheFunc(\Ctr_k) + W_{k,1}$. Of course, $W_{k,1}$ will be all the more small than $\estCtr_k$ approximates accurately $\Ctr_k$. We can then write that $\TheFunc(\Ctr_k) = g_k(\Ctr_k) + W_{k,2}$, where $W_{k,2}$ is likely to be small since the weighted averaging performed by $h_N$ downweights data from clusters other than $k$. In absence of noise, we would directly have $g_k(\Ctr_k) = \Ctr_k$. The presence of noise induces that $g_k(\Ctr_k) = \Ctr_k + W_{k,3}$. At the end, we have $\estCtr_k  = \Ctr_k + W_{k,3} + W_{k,2} + W_{k,1}$. According to Proposition \ref{Prop: fixed points}, $W_{k,1}$ and $W_{k,2}$ can be expected to remain small if centroids are far from each other. 
Unfortunately, we cannot say more about these two terms and we do not know yet how to model them. In contrast, it turns out that the term $W_{k,3}$ can be modeled as follows. 

For any $k \in \llbracket 1, K\rrbracket$ and any $n \in \llbracket 1, N_k\rrbracket$, set $\Noise_{k,n} = \Obs_{k,n} - \Ctr_k$. With the same notation as above, it follows from (\ref{Eq:gk}) that:
\vspace{-0.3cm}
\[W_{k,3} = g_k(\Ctr_k) -  \Ctr_k = \dfrac{\displaystyle \sum_{n = 1}^{N_1} w ( \| \Noise_{k,n} \|^2 ) \Noise_{k,n}}{\displaystyle \sum_{n = 1}^{N_k} w ( \| \Noise_{k,n} \|^2 )}\]
By the central limit theorem and from Appendix \ref{App: Correlation matrix}, we get $$\dfrac{1}{\sqrt{N_k}} \displaystyle \sum_{n = 1}^{N_k} w ( \| \Noise_{k,n} \|^2 ) \Noise_{k,n} \mathop{\Rightarrow}_{N_k \to \infty} \Ncal \left ( 0,\Expect{w \left ( \| \Noise \|^2 \right )^2} \Id \right )$$
where $\Noise \thicksim \Ncal(0,\Id)$. On the other hand, the weak law of large numbers yields:
$$\dfrac{1}{N_k} \displaystyle \sum_{n = 1}^{N_k} w ( \| \Noise_{k,n} \|^2 ) \stackrel{\Pbb}{\rightarrow} \Expect{w \left ( \| \Noise \|^2 \right )}$$ 
Slutsky's theorem \cite[Sec. 1.5.4, p. 19]{Serfling1980}
\color{black} then implies:
$$
\dfrac{\sqrt{N_k} \displaystyle \sum_{n = 1}^{N_k} w ( \| \Noise_{k,n} \|^2 ) \Noise_{k,n}}{\displaystyle \sum_{n = 1}^{N_k} w ( \| \Noise_{k,n} \|^2 )} \mathop{\Rightarrow}_{N_k \to \infty} \Ncal \left ( 0,\NewStd^2 \Id \right )
$$
with 
\begin{equation}\label{eq:rho}\NewStd^2 = \dfrac{\Expect{w \left ( \| \Noise \|^2 \right )^2}}{\Expect{w \left ( \| \Noise \|^2 \right )}^2}\end{equation}
Therefore, $W_{k,3}$ is asymptotically Gaussian so that: $g_k(\Ctr_k) = \Ctr_k + W_{k,3} \thicksim \ANcal \left ( \Ctr_k, ({\NewStd^2}/{N_k}) \, \Id\right )$
Since we do not know how to characterize $W_{k,1}$ and $W_{k,2}$, whose contributions can however be expected to be small, we opt for a model that does not take the influence of these terms into account. Therefore, we model the statistical behavior of $\estCtr_k$ by setting:
\begin{equation}
\label{Eq: model of estimated centroids}
\estCtr_k \thicksim \Ncal(\Ctr_k, (\NewStd^2/N_k) \, \Id ).
\end{equation} 
This model discards the influence of $W_{k,1}$ and $W_{k,2}$, but the experimental results reported in Section \ref{sec:experiments} support the approach. 

\subsection{Marking operation}
\label{subsec: test for clustering}

We now define the hypothesis test that permits to mark the measurement vectors given the estimated centroids.
Let $\estCtr_k$ be an estimate of the unknown centroid $\Ctr_k$. We assume that $\estCtr_k$ follows the model of Eq. (\ref{Eq: model of estimated centroids}). Consider a measurement vector $\Obs_n$ and let $\Ctr_\ell$ be the centroid of the unknown cluster to which $\Obs_n$ belongs, so that $\Obs_n \thicksim \Ncal(\Ctr_\ell,\Id)$. The clustering problem can then be posed as the problem of testing whether $\Ctr_k = \Ctr_\ell$ or not. According to our model for $\Obs$ and $\estCtr_k$, $$\Obs_n - \estCtr_\ell \thicksim \Ncal(\Ctr_\ell-\Ctr_k, (1+(\NewStd^2/N_k)) \Id)$$ Therefore, the problem of testing whether $\Ctr_k = \Ctr_\ell$ amounts to testing the mean of the Gaussian random vector $\Obs_n - \estCtr_\ell$. According to Subsection \ref{Subsec: Wald}, the optimal spherically invariant and UBCP test for this problem is the Wald test $\Topt{\sqrt{(1+(\NewStd^2/N_k))}}$.
However, here, the value of $N_k$ is unknown since the objective of this step is to determine the measurement vectors that belong to the cluster. 
That is why we assume that $N_k$ is high enough and simply apply the Wald test $\Topt{}$ in order to perform the marking.

\subsection{Estimated centroid fusion}

Despite the marking operation, it may occur that some centroids are estimated several times with different initializations. 
In order to merge these centroids that are very close to each other, we now introduce the fusion step that is applied when all the centroids have been estimated.

Consider two estimates $\estCtr_k$ and $\estCtr_\ell$ of two unknown centroids $\Ctr_k$ and $\Ctr_\ell$, respectively. The fusion between $\estCtr_k$ and $\estCtr_\ell$ can then be posed as an hypothesis testing problem where the null hypothesis is $\Hcal_0: \Ctr_k = \Ctr_\ell$ and the alternative is $\Hcal_1: \Ctr_k \neq \Ctr_\ell$. In order to derive a solution to this binary hypothesis testing problem, we resort to the probabilistic model~\eqref{Eq: model of estimated centroids} for the estimated centroids. In this respect, we assume that $\estCtr_k \thicksim \Ncal(\Ctr_k, (\NewStd^2/N_k) \, \Id )$ and that $\estCtr_\ell \thicksim \Ncal(\Ctr_\ell, (\NewStd^2/N_\ell) \, \Id )$. In this model, 
\small
$$\estCtr_k - \estCtr_\ell \thicksim \Ncal \left ( \Ctr_k - \Ctr_\ell , \sigma_{k,\ell}^2 \Id \right )$$
\normalsize
with 
\small
\begin{equation}\label{eq:sigma_fusion}
\sigma_{k,\ell} = \NewStd \, \sqrt{\left ( \dfrac{1}{N_k} + \dfrac{1}{N_\ell} \right )}
\end{equation}
\normalsize
The testing of $\Hcal_0$ against $\Hcal_1$ then amounts to testing the mean of the Gaussian random vector $\estCtr_k - \estCtr_\ell$. According to Section \ref{Subsec: Wald}, this testing problem can optimally be solved by the Wald test $\Topt{\sigma_{k,\ell}}$. Note that in~\eqref{eq:sigma_fusion}, the values $N_k$ and $N_\ell$ are assumed to be known. In practice, since after estimation of $\Ctr_k$ (resp. $\Ctr_\ell$), the measurement vectors close enough to $\Ctr_k$ (resp. $\Ctr_\ell$) are marked, we consider that the number of these marked vectors approximates sufficiently well $N_k$ (resp. $N_\ell$). 

Finally, it is worth emphasizing that the model considered above for centroid fusion is more restrictive than a model that would involve the distribution of the whole sum $W_{k,1} + W_{k,2} + W_{k,3}$, if this distribution were known. By writing that, we mean that the Wald test $\Topt{\sigma_{k,\ell}}$, constructed for $W_{k,3}$ only, is likely to yield more false alarms than a test exploiting the yet unknown whole distribution. In other words, $\Topt{\sigma_{k,\ell}}$ may not decide to merge estimated centroids that should be. However, the experimental results of Section \ref{sec:experiments} support the idea that the Wald test $\Topt{\sigma_{k,\ell}}$ is actually sufficient and efficient for the fusion.

\section{Clustering Algorithms}\label{sec:algo}
The objective of this section is to gather all the estimation functions and hypothesis tests defined in the previous section in order to build two clustering algorithms.
We first describe the centralized algorithm (\algo), and then derive the decentralized version (\dalgo) of the algorithm.

\subsection{Centralized clustering algorithm (\algo)}
The objective of the algorithm is to divide the set of measurement vectors $\Scal = \{\obs_1, \ldots ,\obs_N\}$ into clusters. The centralized algorithm performs the following steps. 

\subsubsection{Initialization}
Let us denote by $\Mk$ the set of vectors $\obs_k$ that are considered as marked, where marked vectors cannot be used anymore to initialize the estimation of a new centroid. The set  $\Mk$ is initialized as $\Mk = \{ \varnothing\}$. Also, let $\Ectr$ be the set of centroids estimated by the algorithm, where  $\Ectr$ is initialized as $\Ectr =  \{ \varnothing\} $.
Fix a parameter $\epsilon$ that corresponds to a stopping criterion in the estimation of the centroids.

\subsubsection{Estimation of the centroids}
The centroids are estimated one after the other, until $\Mk = \Scal$. 
When the algorithm has already estimated $k$ centroids denoted $\widehat{\Ctr}_1, \cdots, \widehat{\Ctr}_k $, we have that $\Ectr = \{ \widehat{\Ctr}_1, \cdots, \widehat{\Ctr}_k   \}$. 
In order to estimate the $k+1$-th centroid, the algorithm picks a measurement vector $\obs_{\star}$  at random in the set $\in \Scal \setminus \Mk$  and initializes the estimation process with $\widehat{\Ctr}_{k+1}^{(0)} = \obs_{\star} $.
It then produces an estimate of the centroid by computing recursively $\widehat{\Ctr}_{k+1}^{(\ell+1)} = \TheFunc(\widehat{\Ctr}_{k+1}^{(\ell)}) $, where the function $\TheFunc $ was defined in~\eqref{Eq:h} and the recursion stops when $ \|  \widehat{\Ctr}_{k+1}^{(\ell+1)} - \widehat{\Ctr}_{k+1}^{(\ell)}\|_2 \leq \epsilon $.
Once the stopping condition is reached after, say, $L$ iterations, the newly estimated centroid is given by $\widehat{\Ctr}_{k+1} = \widehat{\Ctr}_{k+1}^{(L)}$, and the set of estimated centroids is updated as $\Ectr = \Ectr \cup \{ \widehat{\Ctr}_{k+1}\}$.

Once the centroid $\widehat{\Ctr}_{k+1}$ is estimated, the algorithm marks and stores in set $\Mk_{k+1}$ all the vectors that the Wald test $\Topt{}$ defined in~\eqref{Eq:Thresholding test from above} accepts as elements of cluster $k+1$. Therefore, $\Mk_{k+1} = \{ \obs_i \in \Scal : \Topt{}(\obs_i - \widehat{\Ctr}_{k+1}) = 0 \}$.
Note that a vector $\obs_i$ may belong to several sets $\Mk_{k}$, which is not an issue since a set $\Mk_{k}$ of marked vectors is not the final set of vectors assigned to the cluster (see the classification step of the algorithm).
The algorithm finally updates the set of marked vectors as $\Mk \leftarrow \Mk \cup \{ \obs_{\star} \} \cup \Mk_{k+1}$. Note that the measurement vector $ \obs_{\star}$ that serves for initialization is also marked in order to avoid initializing again with the same vectors.
If $\Mk \neq \Scal$, the algorithm estimates the next centroid $\widehat{\Ctr}_{k+2}$. Otherwise, the algorithm moves to the fusion step. 

\subsubsection{Fusion}
Once $\Mk = \Scal $ and, say, $K'$ centroids have been estimated, the algorithm applies the hypothesis test $\Topt{\sigma_{k,\ell}}$ defined in Section~\ref{subsec: Statistical model and fusion} to all pairs $(\widehat{\Ctr}_{k_1}, \widehat{\Ctr}_{k_2}) \in \Ectr \times \Ectr$ such that $k_1 \neq k_2$. 
We assume without loss of generality that the indices $k_1$ and $k_2$ are chosen such that $k_1 < k_2$.
When $\Topt{\sigma_{k,\ell}}(\widehat{\Ctr}_{k_1} - \widehat{\Ctr}_{k_2}) = 0$, the algorithm sets $ \widehat{\Ctr}_{k_1} = \frac{\widehat{\Ctr}_{k_1} + \widehat{\Ctr}_{k_2}}{2}$ and removes $ \widehat{\Ctr}_{k_2}$ from $\Ectr$. 

The fusion step ends when $\Topt{\sigma_{k,\ell}}(\widehat{\Ctr}_{k_1} - \widehat{\Ctr}_{k_2}) = 1$ for all the $(\widehat{\Ctr}_{k_1}, \widehat{\Ctr}_{k_2}) \in \Ectr \times \Ectr$ such that $k_1 \neq k_2$. At this stage, the algorithm sets the number of centroids $K$ as the cardinal of $\Ectr$ and re-indexes the elements of $\Ectr$ in order to get $\Ectr = \{\widehat{\Ctr}_1, \cdots \widehat{\Ctr}_K\} $. It then moves to the final classification step.

\subsubsection{Classification}\label{subsec:classif_algo}
Denote by $\Ck$ the set of measurement vectors assigned to cluster $k$. At the classification step, $\Ck$ is initialized as $\Ck = \{ \varnothing\}$.
Each vector $\obs_i \in \Scal$ is then assigned to the cluster $\mathcal{C}_{k'}$ whose centroid $\widehat{\Ctr}_{k'} \in \Ectr$ is the closest to $\obs_i$, that is $\widehat{\Ctr}_{k'} = \arg\min_{\widehat{\Ctr} \in \Ectr} \| \obs_i - \widehat{\Ctr} \| $. 
%
Note that $\mathcal{C}_k$ can be different from $\mathcal{M}_k$, due to the fusion step, but also to the closest centroid condition used during the classification.
In particular, each vector $\obs_i$ can belong to only one single $\Ck$.

In this version of the algorithm, we classify the measurement vectors by using the minimum distance condition. This ends up to assigning each and every measurement vector to a cluster. However, it is worth noticing that the algorithm could easily be modified by classifying the measurement vectors via the Wald hypothesis test (as for the marking process). By so proceeding, a measurement vector would be assigned to a centroid only if it is sufficiently close to this one. This would permit to detect outliers as vectors that have not been assigned to any cluster.


\subsection{Decentralized clustering algorithm (\dalgo)}
In the decentralized algorithm, the operations required by the algorithm are performed by the sensors themselves over the data transmitted by the other sensors. 
Each of the $N$ sensors has access to one single measurement vector $\obs_n$, only.
We assume that the transmission link between two sensors is perfect, in the sense that no error is introduced during information transmission.
We now describe the decentralized version of the algorithm, and point out the differences with the centralized algorithm.

\subsubsection{Initialization of the algorithm}
In the distributed algorithm, each sensor $n \in \{1,\cdots, N\}$ produces its own set of centroids, denoted $\Ectr_n$ and initialized as $\Ectr_n =  \{ \varnothing\} $.
Since each sensor only has its own observation $\obs_n$, each sensor $n$ now has its own marking variable $M_n$ initialized as $M_n = 0$. 
Denote by $T$ the number of time slots available for the estimation of each centroid, and by $0\leq L\leq N$ a stopping condition for the centroid estimation. 

\subsubsection{Estimation of a centroid}
As for the centralized version, the centroids are estimated one after the other, until $M_n = 1$ for all $n \in \{1,\cdots, N\}$.
When sensor $n$ has already estimated $k$ centroids denoted $\widehat{\Ctr}_{n,1}, \cdots, \widehat{\Ctr}_{n,k} $, we have that $\Ectr_n = \{ \widehat{\Ctr}_{n,1}, \cdots, \widehat{\Ctr}_{n,k}   \}$. 
All the sensors produce their $k+1$-th estimates $\widehat{\Ctr}_{n,k+1} $ at the same time as follows.  

For the initialization of the centroid estimation, one sensor $n'$ is selected at random among the set of sensors $n$ for which $M_n= 0$.
The vector $\obs_{n'}$ observed by this sensor is broadcasted to all the other sensors.
Each sensor $n$ initializes its estimated centroid as $\widehat{\Ctr}_{n,k+1}^{(0)} = \obs_{n'}$, as well as two partial sums $P_{n} = w(\| \obs_n - \widehat{\Ctr}_{n,k+1}^{(0)} \|^2) \obs_n$, $Q_{n} = w(\| \obs_n - \widehat{\Ctr}_{n,k+1}^{(0)} \|^2)$.
It also initializes a counter of the number of partial sums $c_n = 0$ received by sensor $n$.

At each time slot $t=0,\cdots, T$, sensor $n$ receives $J$ partial sums from $J$ other sensors ($J$ can vary from time slot to time slot).  We denote the partial sums received by sensor $n$ as $P_{1\rightarrow n}^{(t)}, \cdots, P_{J\rightarrow n}^{(t)}$ and $Q_{1\rightarrow n}^{(t)}, \cdots, Q_{J\rightarrow n}^{(t)} $. 
We assume that the sensor also receives the counters $c_1, \cdots, c_J$ \pastorv{of partial sums calculated by the $J$ other sensors}. 
The sensor then updates its partial sums as 

\vspace{-0.4cm}

\begin{equation} \nonumber
 P_n^{(t)}  = P_n^{(t-1)} + \sum_{j=1}^J P_{j\rightarrow n}^{(t)}, ~~~
 Q_n^{(t)} = Q_n^{(t-1)} + \sum_{j=1}^J Q_{j\rightarrow n}^{(t)}
\end{equation}
and its counter as $c_n \leftarrow c_n + \sum_{j=1}^J c_j$.
Afterwards, if $c_n \geq L$, the sensor updates its estimate $\widehat{\Ctr}_{n,k+1}$ as $\widehat{\Ctr}_{n,k+1} = \frac{P_n^{(t)}}{Q_n^{(t)}}$ and reinitializes its partial sums as $P_n^{(t)} \leftarrow w(\| \obs_n - \widehat{\Ctr}_{n,k+1} \|^2) \obs_n$, $Q_n^{(t)} \leftarrow w(\| \obs_n - \widehat{\Ctr}_{n,k+1} \|^2)$, and its counter as $c_n \leftarrow 0$. As long as $c_n \leq L$, the sensor has not received enough partial sums from the other sensors: it does not update its estimated centroid and waits for the next time slot. 
In the above process, the centroids are estimated from $\frac{P_n^{(t)}}{Q_n^{(t)}}$, which correspond to an approximation of the function $h_N$ defined in~\eqref{Eq:h}. 

The estimation process stops when time slot $T$ is reached.  At this time, sensor $n$ updates its set of centroids as $\Ectr_n \leftarrow \Ectr_n \cup \{ \widehat{\Ctr}_{n,k+1} \} $. 
If then verifies whether its observation $\obs_n$ belongs to the newly created cluster by applying the same hypothesis test as in \algo. In this case, if $ \Topt{}(\obs_i - \widehat{\Ctr}_{k+1}) = 0$, then $M_n = 1$. 
If $M_n = 1$ for all $n \in \{1,\cdots, N\}$, the algorithm moves to the next step.

\subsubsection{Fusion}
The fusion step is almost the same as in the centralized algorithm, except that each sensor performs its own fusion over the set $\Ectr_n$. 
Sensor $n$ applies the hypothesis test $ \Topt{\sigma_{k,\ell}}(\widehat{\Ctr}_{n,k_1} - \widehat{\Ctr}_{n,k_2})$ defined in Section~\ref{subsec: Statistical model and fusion} to all the $(\widehat{\Ctr}_{n,k_1}, \widehat{\Ctr}_{n,k_2}) \in \Ectr_n \times \Ectr_n$ such that $k_1 \neq k_2$. 
It then fusions the centroids $\widehat{\Ctr}_{n,k_1}$,  $\widehat{\Ctr}_{n,k_2}$ for which $\Topt{\sigma_{k,\ell}}(\widehat{\Ctr}_{n,k_1} - \widehat{\Ctr}_{n,k_2}) = 0$ and denotes by $K_n$ the final number of centroids in $\Ectr_n$.

\subsubsection{Classification}
The classification step is exactly the same as in the centralized algorithm, except that sensor $n$ only classifies its own observation $\obs_n$.
The sensor identifies the centroid $\widehat{\Ctr}_{n,k'} \in \Ectr_n$ that is the closest to $\obs_n$, that is $\widehat{\Ctr}_{n,k'} = \arg\min_{\widehat{\Ctr} \in \Ectr_n} \| \obs_n - \widehat{\Ctr} \| $. 

At the end, the proposed decentralized algorithm induces more latency in the clustering compared to K-means, since in K-means the centroids are estimated in parallel. However, our algorithm should reduce the overall number of messages exchanged between sensors, since it does not need to be initialized by the K-means++ procedure and since it does not need to estimate the number of clusters.


\section{Experimental results}\label{sec:experiments}
This section evaluates the performance of \algo~and \dalgo~through Monte Carlo simulations. 
In all our simulations, the observation vectors $\obs_n$ that belong to cluster $k$ are generated according to the model $\Obs_n \sim \mathcal{N}(\Ctr_k, \sigma^2 \Id)$, where $\sigma^2$ is the noise variance. 
Depending on the considered setup, the centroids $\Ctr_k$ will be generated differently. In order to evaluate the performance of our algorithm, we consider two figures of merit:
\begin{itemize}
 \item[-] The \emph{classification error probability $P_e$} estimates the probability that a data point has been assigned to the wrong cluster. It is determined  from the confusion matrix which is a 2D matrix with true clusters in line and estimated clusters in columns. 
 In cell $(i,j)$ of the matrix is indicated the percentage of data from estimated cluster $j$ that actually belong to estimated cluster $i$. 
 \item[-] The \emph{estimation distortion $D$} is defined as the average distance between the data points and the estimated centroids of the clusters to which they have been assigned. 
\end{itemize}
In all our simulations, the parameter $\NewStd^2$~\eqref{eq:rho} used for the fusion is evaluated numerically from Monte Carlo simulations by averaging over $10000$ realizations of $\mathbf{X}$. Note that this parameter depends on the dimension but depends on neither $\std$ nor the considered data. It is thus computed once for all for every set of simulations. 
The probability of false alarm $\level$ is always set to $10^{-3}$ and the stopping criterion $\epsilon$ is always set to $10^{-2}$. However, these empirical parameters do not influence much the performance of the algorithm as long as they belong to a reasonable range (roughly, from $10^{-6}$ to $10^{-2}$).

\subsection{Centralized algorithm}
We start by evaluating the performance of the centralized version of the algorithm.
For comparison, we evaluate the performance of the K-means algorithm, and we consider two methods in order to alleviate the sensitivity of K-means to initialization. 
We first use the method of replicates, that involves running the algorithm $R$ times with random initializations and choosing the solution that minimizes the average distance between the measurement vectors and the estimated centroids. In all our simulations, we consider $R=10$ and $R=100$.
Second, we consider the K-means++ algorithm without any replicates. 
As discussed earlier in the paper, these two methods may not be appropriate for a distributed treatment.
However, our purpose here is to assess the performance of our centralized algorithm compared to existing algorithms, before considering a distributed context.

In our experiments, we also assume that the number $K$ of clusters is provided to the K-means algorithm. 
This choice places K-means in very favorable conditions, but permits to avoid running the algorithm several times with various values of $K$. 
Note that all the K-means algorithms we use in our experiments are obtained from the Matlab function ``kmeans''. 
We now evaluate the performance of our centralized algorithm against K-means for various setups described in the following.

\subsubsection{ Parameters $\dim = 2$, $K = 4$, $N = 400$}\label{subsec:exp_dim2}


\begin{figure*}[t]
\begin{center}
  \subfloat[~]{ \includegraphics[width=.32\linewidth]{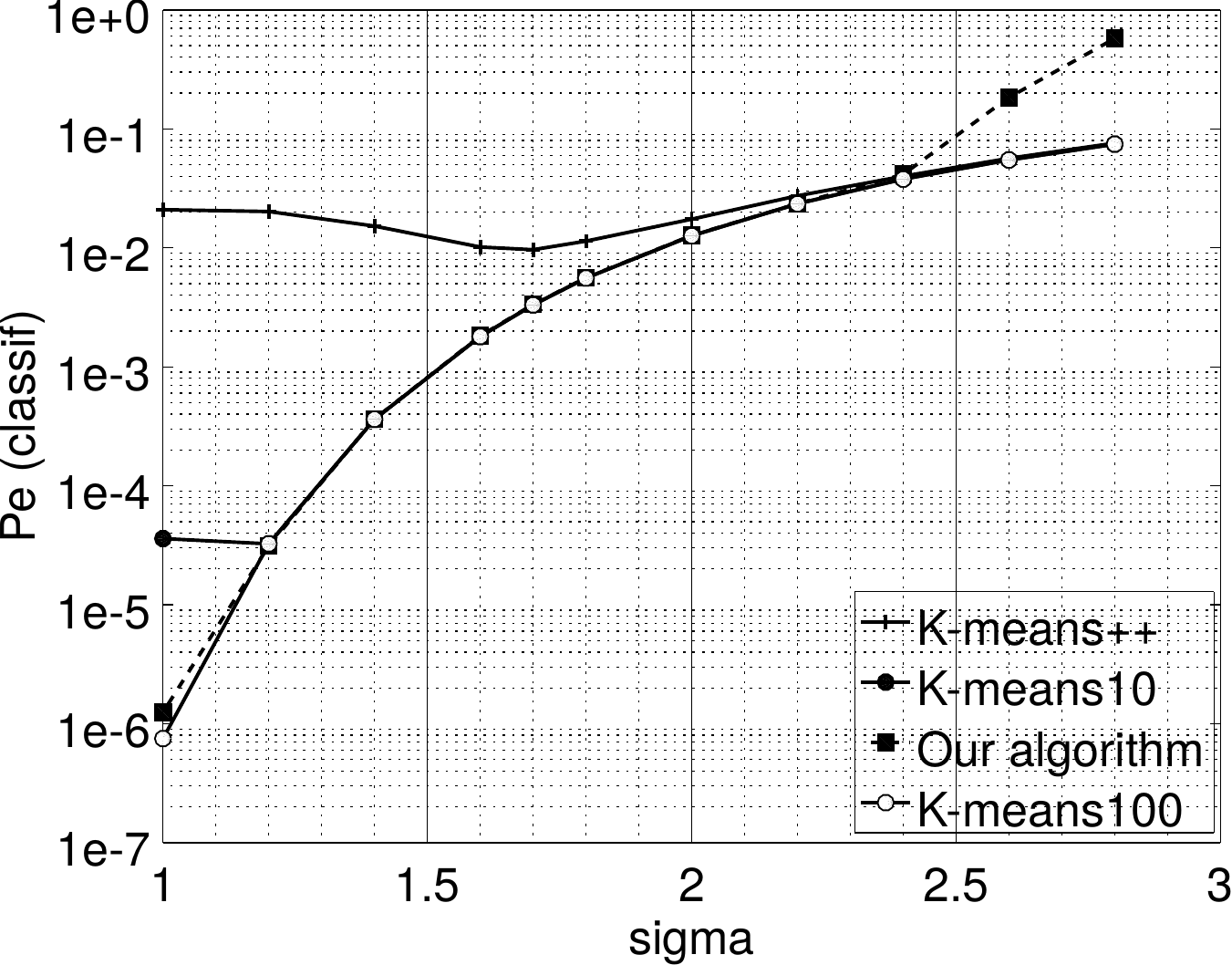}}
  \subfloat[~]{ \includegraphics[width=.32\linewidth]{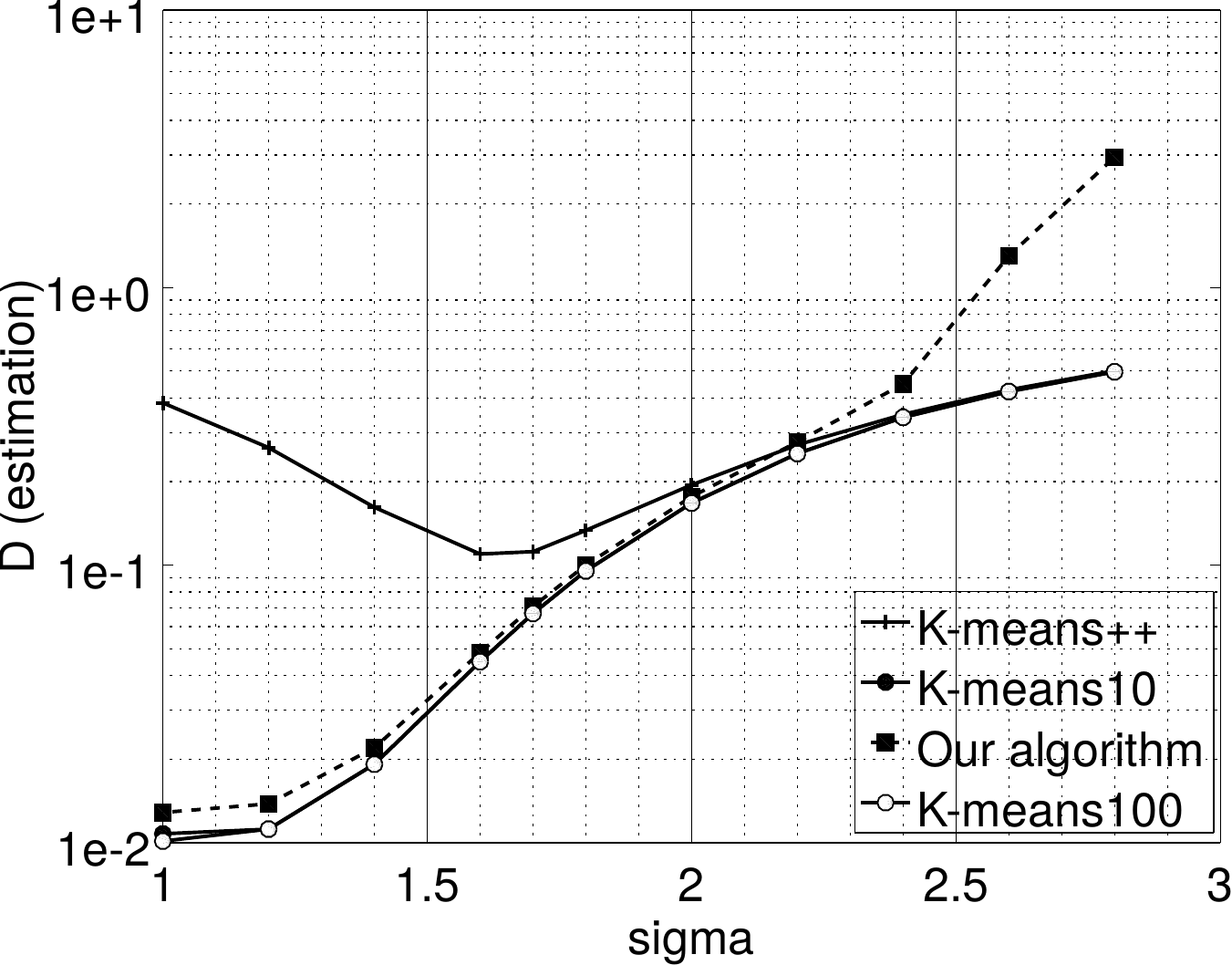}}
  \subfloat[~]{\includegraphics[width=0.32\textwidth]{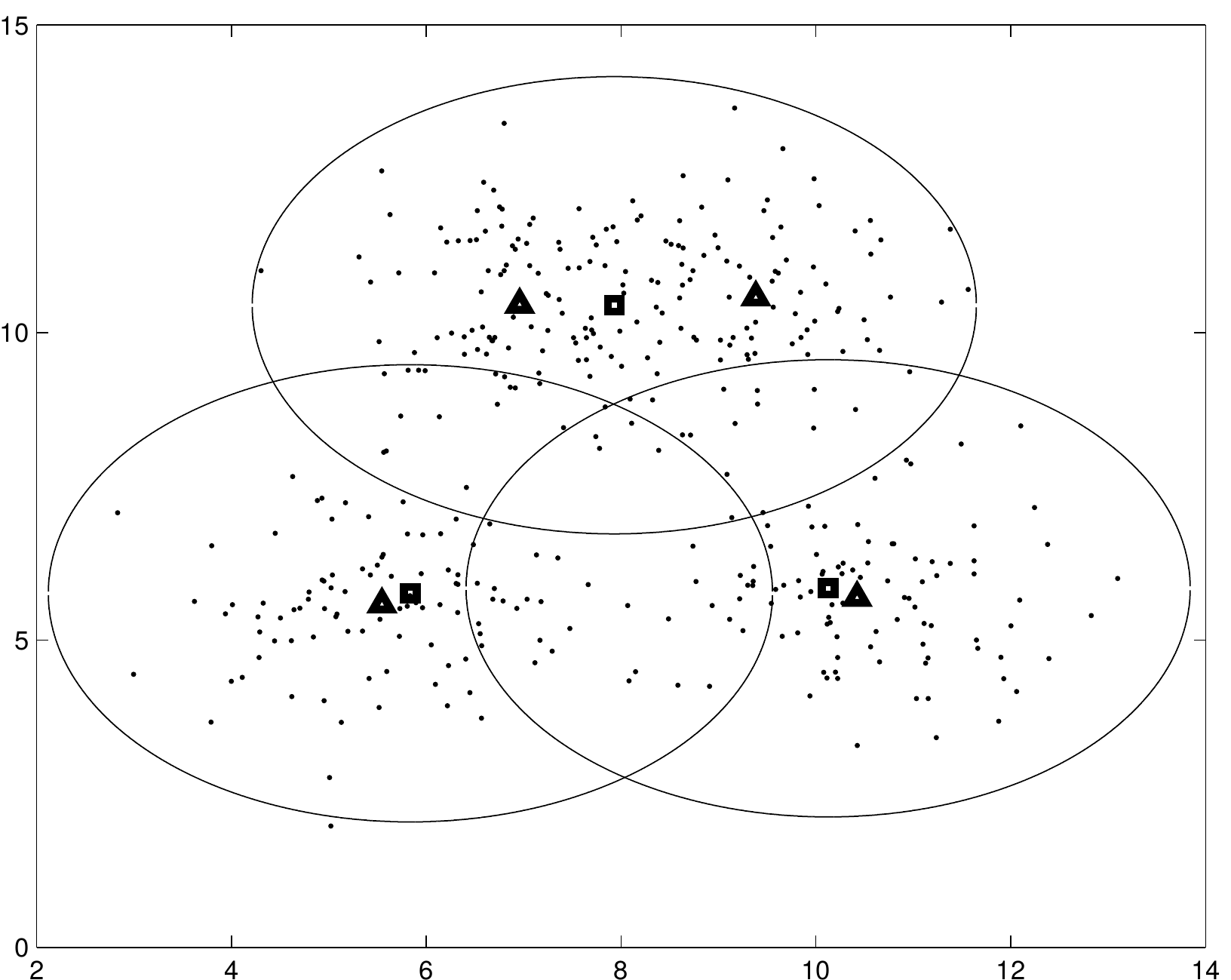}}
  \vspace{-0.2cm}
\end{center}
\caption{Performance evaluation of \algo, with $\dim = 2$, $K = 4$, $N = 400$: (a) Classification error probability (b) Estimation distortion. In both Figures, Kmeans10 and Kmeans100 correspond to the K-means algorithm with $10$ and $100$ replicates, respectively. (c) Example of clustering when $\sigma>\sigma_{\text{lim}}$. Triangles give the centroids estimated by K-means10 and squares give centroids estimated by our algorithm. The circles correspond to the decision thresholds. \vspace{-12pt}}
\label{fig:res_dim2}
\end{figure*}

In this part, the observations are generated as follows. The vector dimension is given by $d=2$, the number of clusters is $K=4$, and the number of observations is $N=400$.
The $4$ centroids are given by $\Ctr_1 = [A,2A]$, $\Ctr_2 = [2A, A]$, $\Ctr_3 = [A, A]$, $\Ctr_4 = [2A, 2A]$, where $A = 10$.
The number $N_k$ of observations in cluster $k$ does not depend on $k$ and is given by $N_k=N/K = 100$. 
We consider various values of $\sigma$, and evaluate the performance of the centralized algorithms over $Nt = 10000$ realizations for each considered value of $\sigma$. 

The results are presented in Figure~\ref{fig:res_dim2} (a) for the classification error probability and in Figure~\ref{fig:res_dim2} (b) for the estimation distortion.
First, it is worth noticing than K-means++ without any replicates does not perform well in this setup. 
In particular, for K-means++, low values of $\sigma$ unexpectedly give higher classification error probability and estimation distortion. 
This is probably due to the fact that when $\sigma$ is low, the clusters are so far from each other that a bad initialization cannot be handled by the algorithm. 
As a result, K-means++ alone may not be sufficient by itself and may need replicates as well.

Further, according to the two figures, \algo~exhibits the same  performance as K-means algorithms $10$ and $100$ replicates, when $\sigma$ ranges from $1.2$ to $2.5$.
Below $\sigma = 1.2$, K-means with $10$ replicates, henceforth denoted by K-means10, performs worse than \algo, probably for the same initialization issues as those incurred by K-means++.
This issue does not appear anymore for K-means with $100$ replicates, since the higher number of replicates increases the chances to find a good initialization.
Therefore, even when $K$ is known, K-means requires a large number of replicates in order to increase the probability of initializing correctly the algorithm.

On the other hand, for $\sigma>2.5$, the K-means algorithm with $100$ replicates, hereafter denoted by K-means100, outperforms our algorithm.
In fact, when $\sigma$ becomes too large, two clusters can be so entangled that discriminating between them is hardly feasible without prior knowledge of the number of clusters.
In this case, K-means may still be able to separate between two clusters since it already knows that there are two clusters, while our algorithm may tend to merge the two centroids due to the value of the variance.

It turns out that we can predict the value $\sigma_{\text{lim}}$ above which two clusters may hardly be distinguished by \algo. Basically, if the balls $B(\Ctr_k,\TheThreshold{\sigma})$ and $B(\Ctr_\ell,\TheThreshold{\sigma})$ --- with same radius $\TheThreshold{\sigma}$ and respective centers $\Ctr_k$ and $\Ctr_\ell$ --- actually intersect, there might be some ambiguity in classifying elements of this intersection. 
Classifying  data belonging to this intersection will be hardly feasible as soon as $\sigma$ is such that $\| \Ctr_k - \Ctr_\ell \| \leqslant \TheThreshold{\sigma}$.
We conclude from the foregoing that our algorithm should perform well for $\sigma \leqslant \sigma_{\text{lim}}$ and may severely degrade for $\sigma > \sigma_{\text{lim}}$, where 
\begin{longversion}
\begin{equation}
\label{eq: sigmalim}
\sigma_{\text{lim}} = \min_{k,\ell} \| \Ctr_k - \Ctr_\ell \| / \mu(\gamma) .
\end{equation}
\end{longversion}
\begin{shortversion}
$\sigma_{\text{lim}} = \min_{k,\ell} \| \Ctr_k - \Ctr_\ell \| / \mu(\gamma) .$
\end{shortversion}
According to this rationale, it follows from the values chosen for $A$ and $\level$ that $\sigma_{\text{lim}} = 2.7$, which is close to the value $\sigma = 2.5$ found by considering the experimental results of Figure~\ref{fig:res_dim2}.


In order to illustrate the behavior of our algorithm for $\sigma>\sigma_{\text{lim}}$, we consider the following setup.
We keep the parameters $\dim = 2$, $K = 4$, $N = 400$, but we modify the centroids as $\Ctr_1 = [13,20]$, $\Ctr_2 = [20, 10]$, $\Ctr_3 = [10, 10]$, $\Ctr_4 = [17, 20]$.
For these parameters, $\sigma_{\text{lim}}=1.1$. We set $\sigma=2>\sigma_{\text{lim}}$ and apply both \algo~and K-means10 to the set of data.
The results are represented in Figure~\ref{fig:res_dim2} (c). We see that K-means retrieves four clusters, which is in accordance with the ground truth, while \algo finds three clusters only.
However, by taking a look at the generated data, it does not seem such a bad choice in this situation to consider three clusters instead of four, which cannot actually be assessed by the classification error probability since this one performs a comparison to the true generated clusters.

\begin{longversion}
\subsubsection{Comparison with Gaussian kernel}


\begin{figure*}[t]
\begin{center}
  \subfloat[~]{ \includegraphics[width=.48\linewidth]{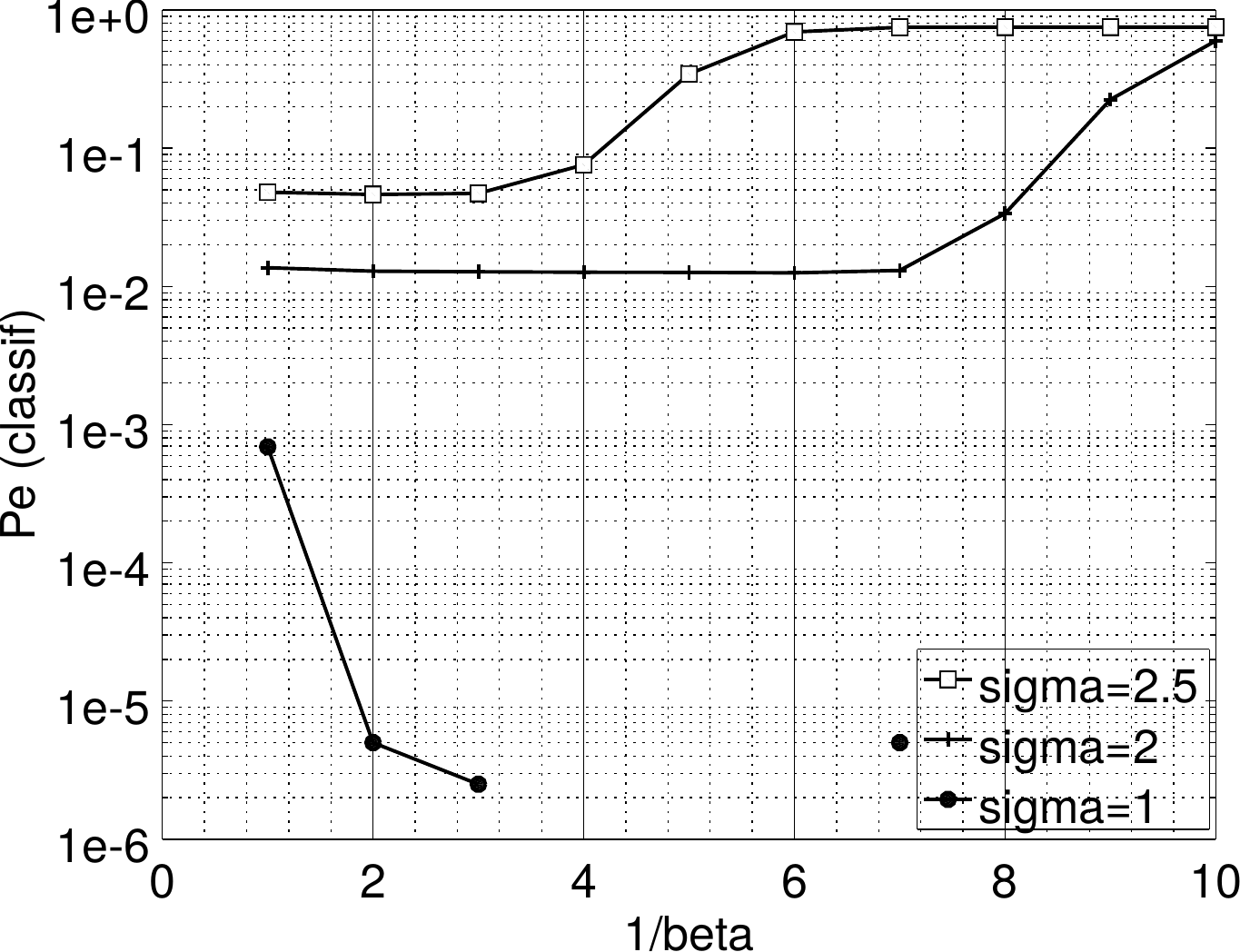}}
  \subfloat[~]{ \includegraphics[width=.48\linewidth]{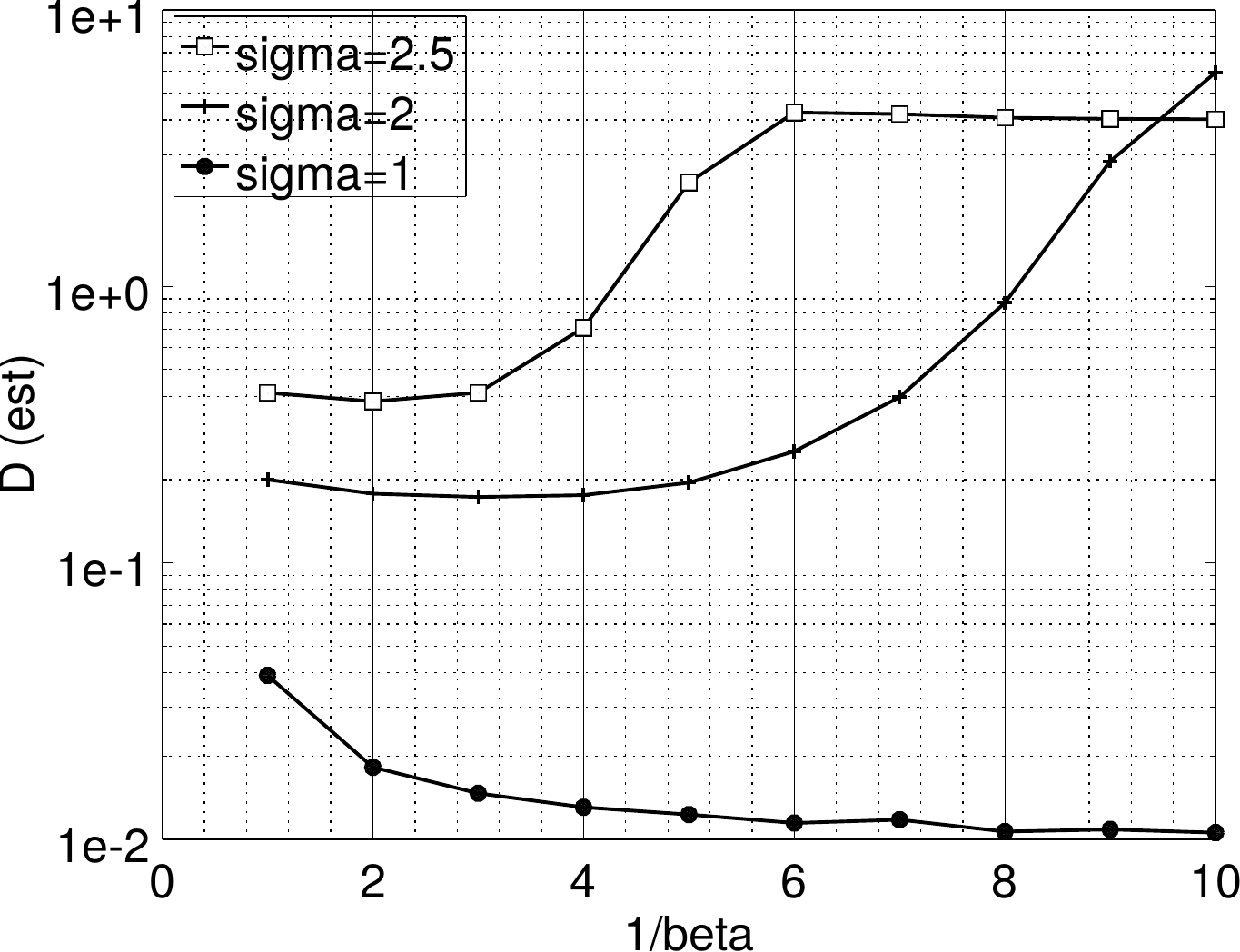}}
\end{center}
\caption{Performance of \algo~with Gaussian kernel, with respect to parameter $\beta$ and for various values of $\sigma$: (a) Classification error probability (b) Estimation distortion. \pastorv{For $1/\beta > 3$ and $\sigma = 1$, no value for the classification error probability is reported in subfigure (a), because \algo~with Gaussian kernel commited no classification error in this range.}}
\label{fig:res_exp_beta}
\end{figure*}

Here, we consider the same parameters $\dim = 2$, $K = 4$, $N = 400$ and the same data generation as in the previous experiment. 
We want to compare the performance of \algo~when, for M-estimation of the centroids, the Wald p-value kernel is replaced by the Gaussian kernel $w(\xbm) = \exp \left ( - \frac{\beta}{\sigma^2} \| \xbm \|^2 \right )$. This form for the Gaussian kernel is conveniently chosen here, instead of $w(\xbm) = \exp \left ( - \beta \| \xbm \|^2 \right ) $ as in~\cite{wu02PR}, so as to hereafter consider parameter values that are directly proportional to $1/\sigma^2$.


As explained in Section~\ref{sec:mestimator}, the main drawback of the Gaussian kernel resides in the choice of the value of $\beta$. 
In order to select this value, we first evaluated the performance of \algo~with the Gaussian kernel for various values of $\beta$ and $\sigma$.  
Figures~\ref{fig:res_exp_beta} (a) and (b) represent the obtained classification error probability and estimation distortion. 
We first see that the choice of the value of $\beta$ does not depend on whether we want to optimize the classification error criterion or the estimation distortion criterion. For example, for $\sigma=2.5$, the value $\beta=1/2$ yields good performance with respect to both criteria.
On the other hand, the optimal value of $\beta$ turns out to depend on the value of $\sigma$. For instance, for $\sigma=1$, we would select $\beta$ as small as possible, whereas we should choose a large value of $\beta$ for $\sigma=2.5$. From these observations, we can conclude that it is difficult to optimize the parameter $\beta$ once for all, for the whole range of possible values of $\sigma$. 

\begin{figure*}[t]
\begin{center}
  \subfloat[~]{ \includegraphics[width=.48\linewidth]{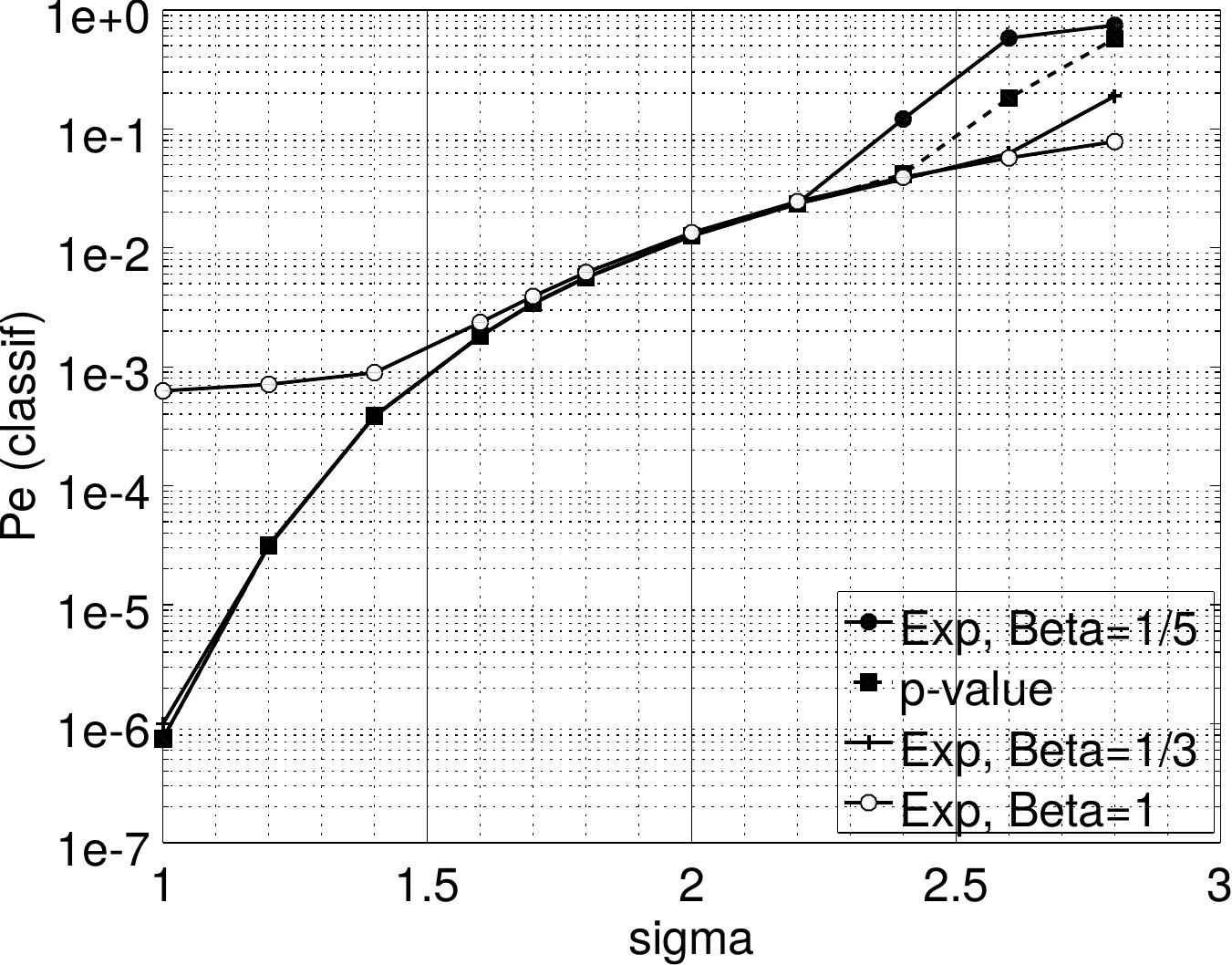}}
  \subfloat[~]{ \includegraphics[width=.48\linewidth]{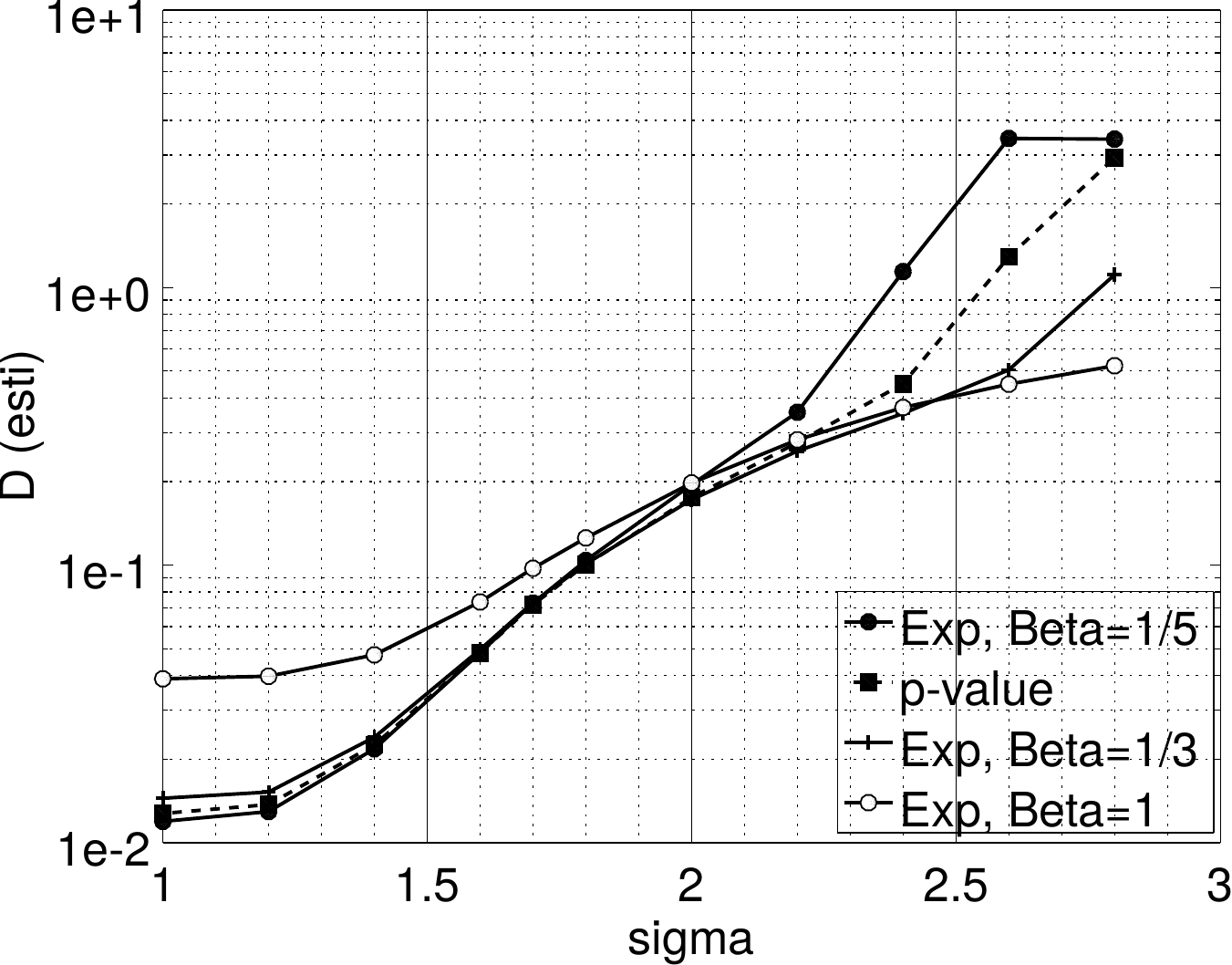}}
\end{center}
\caption{Performance of \algo with Gaussian kernel, with respect to $\sigma$ and for various values of $\beta$: (a) Classification error probability (b) Estimation distortion.}
\label{fig:res_exp}
\end{figure*}

We also evaluated the performance of \algo~with Gaussian kernel for fixed values $\beta = 1$ (as recommended in~\cite{wu02PR}), $\beta=1/3$, $\beta=1/5$.
The results are presented in Figures~\ref{fig:res_exp} (a) and (b).
The parameter $\beta=1$ shows the best performance for high values of $\sigma$, but the worst performance for low values of $\sigma$, and the opposite observation can be made for $\beta=1/5$.
The value $\beta=1/3$ seems to represent the best tradeoff since it it close to the best possible performance for any value of $\sigma$. 
Compared to these results, \algo~with Wald p-value kernel yields good performance unless the value of $\sigma$ becomes too large. 
This result is obtained without having to optimize any parameter, as we have to do for the Gaussian kernel. 
This feature of \algo~\duprazv{with Wald p-value} is useful since it induces no parameter change when considering various dimensions, number of clusters, etc. 
\end{longversion}

\subsubsection{Parameters $\dim = 100$, $K = 10$, $N = 100$ }

\begin{onecol}
\begin{figure*}[t]
\begin{center}
  \subfloat[~]{ \includegraphics[width=.48\linewidth]{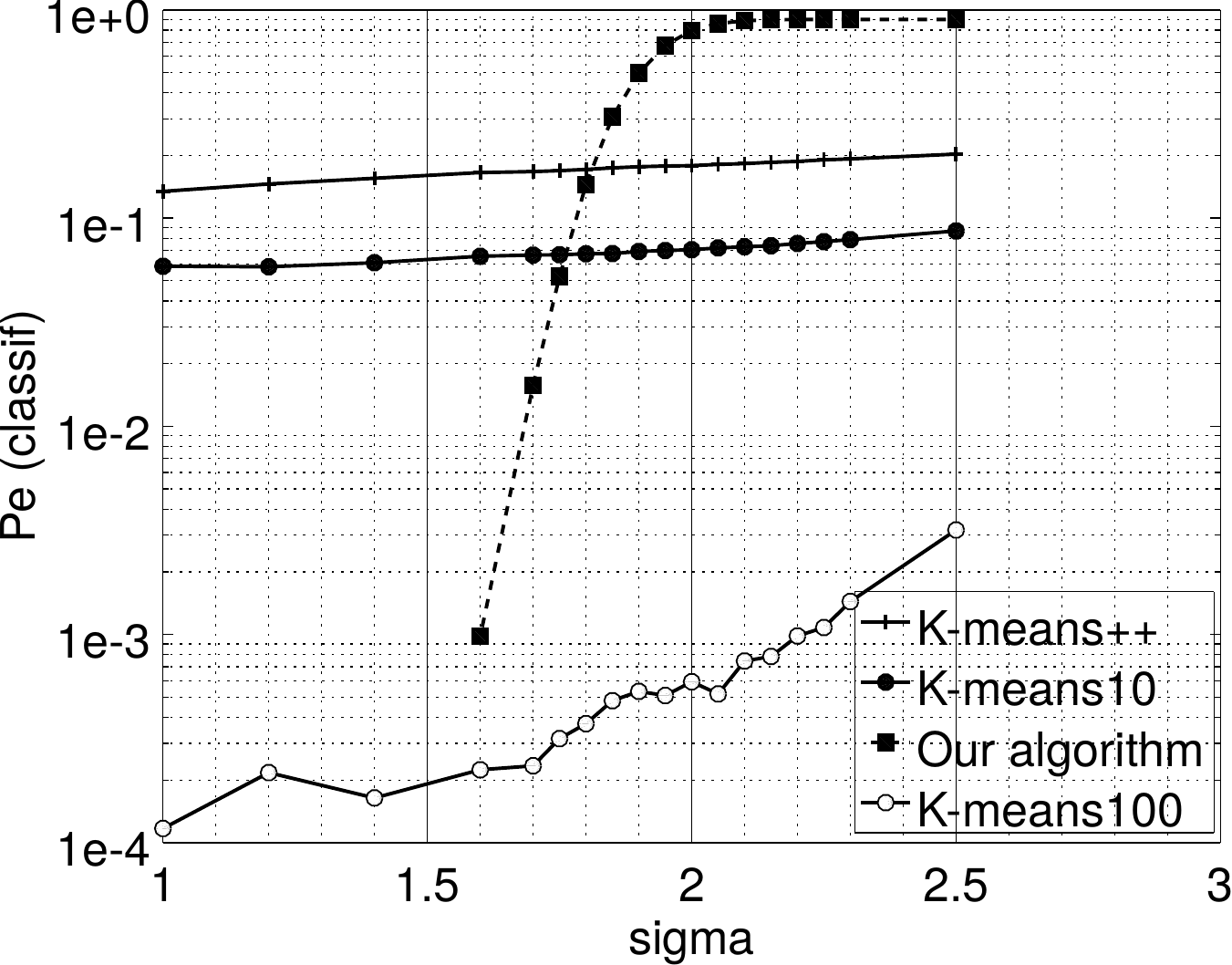}}
  \subfloat[~]{ \includegraphics[width=.48\linewidth]{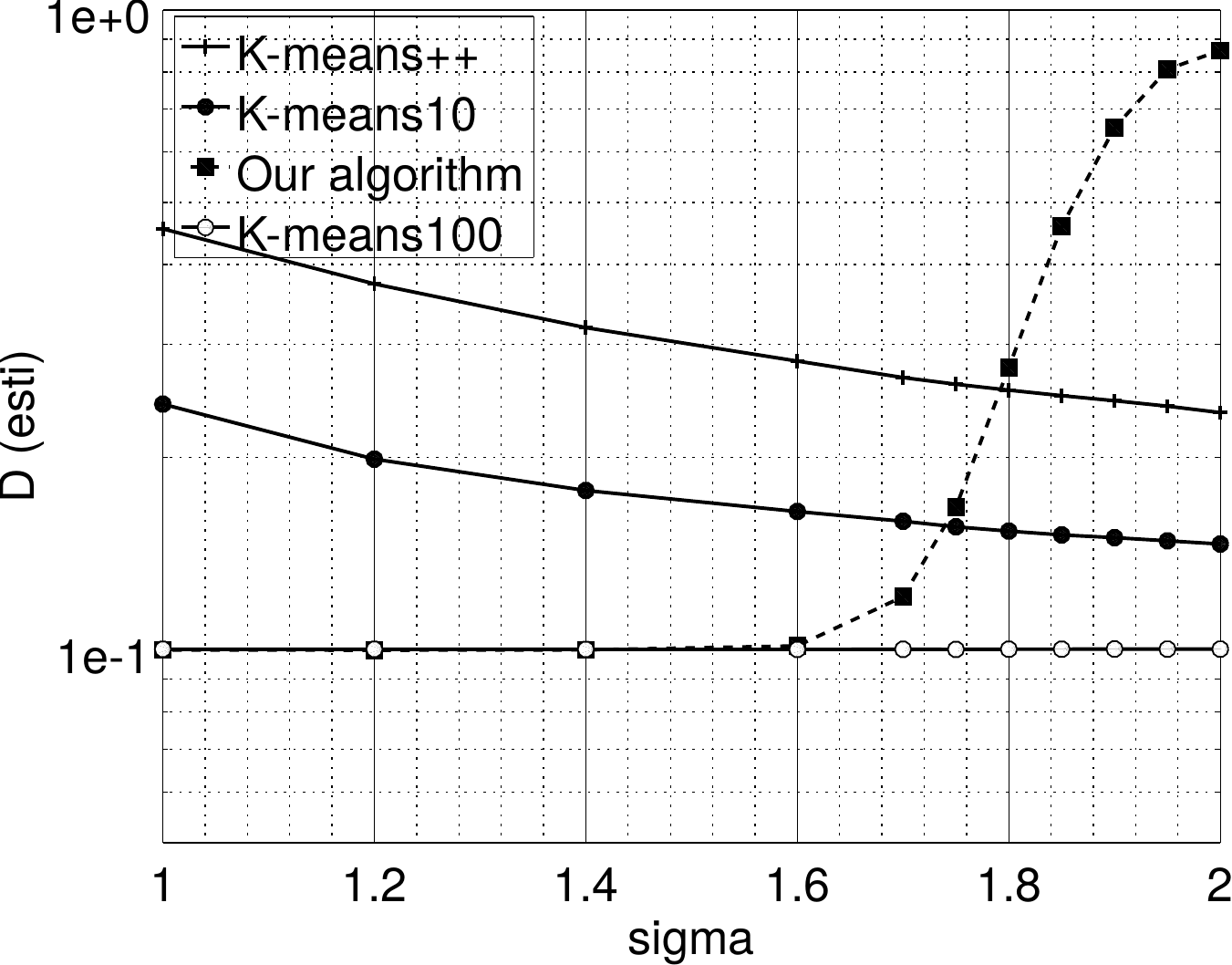}}
    \vspace{-0.4cm}
\end{center}
\caption{Performance evaluation of \algo, K-means10 and K-means100, with $\dim = 100$, $K = 10$, $N = 100$: (a) Classification error probability (b) Estimation distortion. For $\sigma < 1.6$, no value for the classification error probability is reported in subfigure (a) for \algo, since this one commited no classification error over the considered $1000$ realizations. \vspace{-14pt}
}
\label{fig:res_dim100}
\end{figure*}
\end{onecol}

\begin{twocol}
\begin{figure*}[t]
\begin{center}
  \subfloat[~]{ \includegraphics[width=.32\linewidth]{Figures/Pe_dim100.pdf}}
  \subfloat[~]{ \includegraphics[width=.32\linewidth]{Figures/D_dim100.pdf}}
\end{center}
\caption{Performance evaluation of \algo, K-means10 and K-means100, with $\dim = 100$, $K = 10$, $N = 100$: (a) Classification error probability (b) Estimation distortion. For $\sigma < 1.6$, no value for the classification error probability is reported in subfigure (a) for \algo, since this one commited no classification error over the considered $1000$ realizations.
}
\label{fig:res_dim100}
\end{figure*}
\end{twocol}

In this part, we consider a higher vector dimension $d=100$, as well as an increased number of clusters $K=10$. The number of observations is set to $N=100$, which gives only $10$ vectors per cluster.
We  still consider various values of $\sigma$, and evaluate the centralized algorithms over $Nt = 10000$ realizations for each considered value of $\sigma$. 
The ten new centroids are generated once for all as $\Ctr_k \sim \mathcal{N}(0,A^2 \Identity{\dim})$ with $A=2$.

As for $\dim=2$, \algo~is benchmarked against K-means++ without replicates, K-means10 and K-means100.
The results are presented in Figures~\ref{fig:res_dim100} (a) and (b).
We first observe that K-means++ and K-means10 replicates perform very poorly with this set of parameters. 
The relatively high number of clusters $(K=10)$ makes it more difficult to obtain a correct initialization with these two solutions, which explains these poor results. 
Regarding our algorithm, it is worth mentioning that for $\sigma<1.6$, the simulations did not return any error in terms of classification over the considered $1000$ realizations.
As a result, \algo~outperforms K-means100 for low values of $\sigma$. 
This is of noticeable importance since the number of data per cluster is relatively small $(N/K=10)$, whereas the theoretical results of Section~\ref{sec:theory} where proved under the conditions that $N$ and the distances between centroids goes to infinity. 
This shows that our algorithm still performs well in non-asymptotic conditions.

On the other hand, our algorithm severely degrades for high values of $\sigma$.
The value $\sigma_{\text{lim}}$ for which our algorithm does not perform good anymore can be determined theoretically as in Section~\ref{subsec:exp_dim2} as $\sigma_{\text{lim}} = 2$.
The region where $\sigma> \sigma_{\text{lim}}$ corresponds again to cases where the clusters are too close to each other for \algo~to be capable of separating them, since it ignores the number of clusters.

In our experiments, K-means was evaluated with replicates and known $K$. In contrast, \algo~uses no replicates and was not provided with the value of $K$. It turns out that, despite these conditions favorable to K-means, \algo~\pastorv{does not incur a too significant performance loss} in comparison to K-means and even outperforms this one as long as the noise variance is not too big. In addition, \algo~can perform so without the need to be repeated several times for proper initialization. For all these reasons, \algo~appears as a good candidate for decentralized clustering.

\begin{longversion}
\subsection{Tests on images}


Additional tests on images were also performed so as to study further the behavior of \algo~in very high dimension. We considered the nine images of Figure \ref{Fig: clean images}. Each of these images has size $256 \times 256$. After computing numerically the minimum distance $\d_{\text{min}}$ between these images, application of (\ref{eq: sigmalim}) with $\level = 10^{-3}$ returns $\sigma_{\text{lim}} =  57$. 

The next experiments are based on the following remark. The {\em a priori} known noise standard deviation $\sigma$ can easily be {\em a posteriori} estimated after clustering. It can then be expected that there should be little difference between the true value of $\sigma$ and its estimate after clustering by \algo, provided that $\sigma$ remains small enough, typically less than $\sigma_{\text{lim}}$. On the opposite, this difference should increase once the noise standard deviation exceeds some value that must be around $\sigma_{\text{lim}}$. 

To verify this claim, we performed clustering by \algo~on a set of noisy images generated as follows. For each $\sigma \in \{40, 41, \ldots, 51, 51, \ldots, 69\}$, we generated $N = 10$ noisy versions of each noiseless image displayed in Figure \ref{Fig: clean images}. By so proceeding, we obtained a set of noisy images. Instances of these noisy images are presented in Figures \ref{Fig: noisy images-sig50} and \ref{Fig: noisy images-sig70} for $\sigma = 50$ and $\sigma = 70$, respectively. After clustering by \algo, we estimated $\sigma$ for comparison to its true value. The value of $\sigma$ was estimated as
\begin{equation}
\widehat{\sigma}^2 = \frac{1}{N d} \sum_{n=1}^N \| \obs_n - \widehat{\ctr}_k (\obs_n) \|^2
\end{equation}
where $\widehat{\ctr}_k (\obs_n) $ is the closest estimated centroid to $\obs_n$. For each value of $\sigma$, we reiterated $100$ times the above process so as to average the noise standard deviation estimates. The obtained average values are displayed in Figure \ref{Fig: est_sigma} \duprazv{with respect to $\sigma$}. 

In this figure, for reasons not yet identified, we observe that $\sigma$ is underestimated (resp. \duprazv{overrestimated}) after clustering by \algo, when $\sigma$ is below (resp. above) $59$. Thus, the value $\sigma_{\text{lim}}$ also characterizes rather well the change in the behavior of the noise standard deviation estimation after clustering by \algo. 

Figure \ref{Fig: est_sigma} also shows that \algo~can itself assess the quality of its clustering and warns the user that the clustering may be failing. Indeed, as soon as the noise standard deviation is overestimated after clustering by \algo, it can be deemed that \algo~is performing out its optimal operating range. To experimentally verify this assertion, we proceeded as follows. For any given tested $\sigma$ and any clustering algorithm, we can always calculate the PSNR of each estimated centroid, with reference to the closest image among the nine of Figure \ref{Fig: clean images}. By so proceeding, we can evaluate the quality of the centroid estimation performed by the clustering. We recall that the PSNR of a given image $I$ with respect to a reference one $I_{\text{ref}}$, both with size $M \times M$, is defined by setting $\text{PSNR} = 10 \log_{10} \left ( d(I)^2/\text{QEM} \right)$ where $d(I)$ is the maximum pixel value of $I$ and $\text{QEM}$ is the quadratic error mean given by $\text{QEM} = \frac{1}{M^2} \sum_{i=1}^M \sum_{j=1}^M \left ( I(i,j) - I_{\text{ref}}(i,j) \right )^2.$

Instances of such PSNRs are given in Table \ref{Tab: PSNRS}. For each $\sigma \in \{50, 60, 70 \}$, the PSNRs were calculated by generating $N = 10$ noisy versions of each clean image of Figure \ref{Fig: clean images}, shuffling the resulting noisy images to form the dataset presented to \algo, K-means10 and K-means100. No averaging were performed to get these values so as to better emphasize the following facts. Clearly, \algo~may fail to find out the correct number of centroids when $\sigma$ becomes too large and, more precisely, above $\sigma_{\text{lim}} = 59$. However, the centroids estimated by \algo~are well estimated, with a PSNR equal to that returned by K-means100. In contrast, K-means10 sometimes fails to estimate correctly the centroids. Indeed, for each $\sigma$ in Table \ref{Tab: PSNRS}, K-means10 returned a centroid mixing several images (see Figure \ref{Fig: mingled centroids}). In short, \algo~may fail to retrieve all the existing centroids; nevertheless, those yielded by \algo~are correctly estimated, whereas K-means may require quite a lot of replicates to perform a correct estimation of these same centroids.

\begin{figure*}[!htbp]
\centering
\vspace*{-0.5cm}
\begin{tabular}{c c c}
\centering
\begin{tabular}{c} 
\small Barbara\\
\hspace{-0.5cm} 
\myinclude{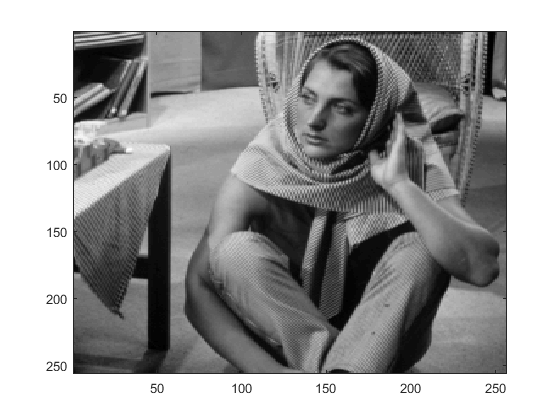}
\end{tabular}
\hspace{-0.5cm} 
&
\begin{tabular}{c} 
\small Cameraman\\
\hspace{-0.5cm} 
\myinclude{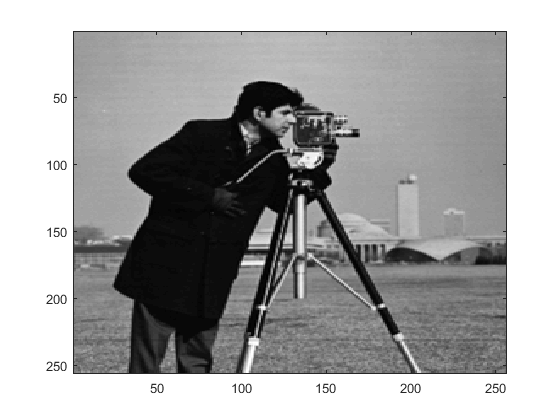}
\end{tabular}
\hspace{-0.5cm} 
&
\begin{tabular}{c} 
\small Einstein\\
\hspace{-0.5cm} 
\myinclude{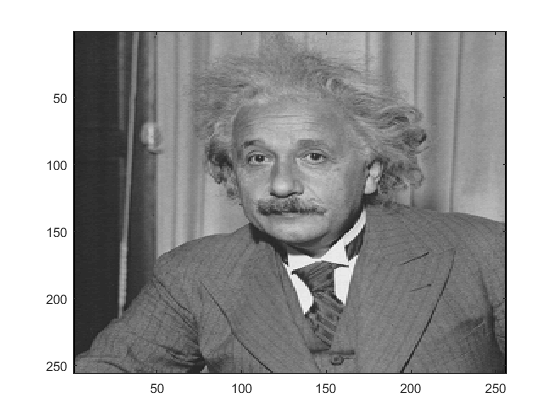}
\end{tabular}
\end{tabular}
\vspace{0.2cm}\\
\begin{tabular}{c c c}
\centering
\begin{tabular}{c} 
\small House\\
\hspace{-0.5cm} 
\myinclude{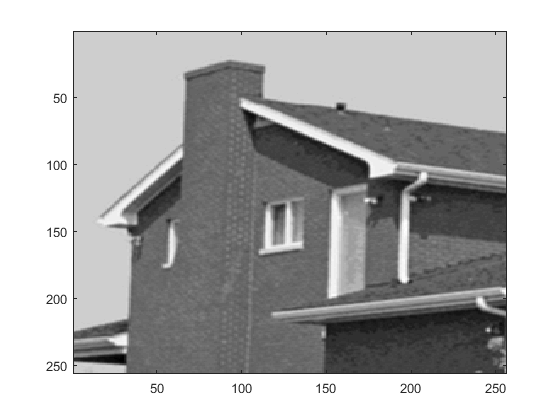}
\end{tabular}
\hspace{-0.5cm} 
&
\begin{tabular}{c} 
\small Jetplane\\
\hspace{-0.5cm} 
\myinclude{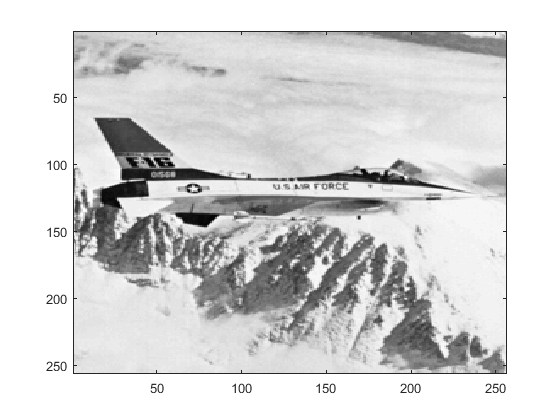}
\end{tabular}
\hspace{-0.5cm} 
&
\begin{tabular}{c} 
\small Lake\\
\hspace{-0.5cm} 
\myinclude{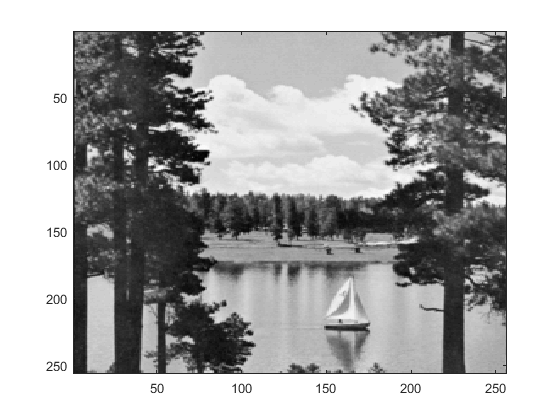}
\end{tabular}
\end{tabular}
\vspace{0.2cm}\\
\begin{tabular}{c c c}
\centering
\begin{tabular}{c} 
\small Lena\\
\hspace{-0.5cm} 
\myinclude{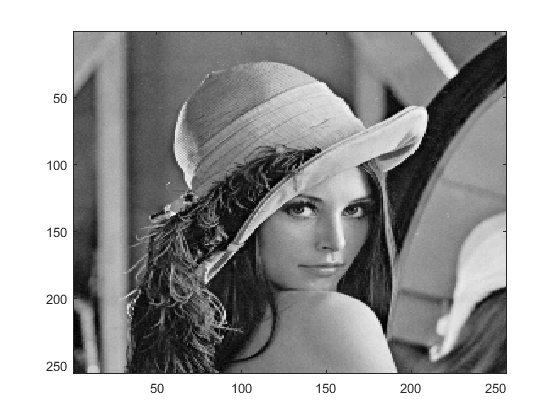}
\end{tabular}
\hspace{-0.5cm} 
&
\begin{tabular}{c} 
\small Mandrill\\
\hspace{-0.5cm} 
\myinclude{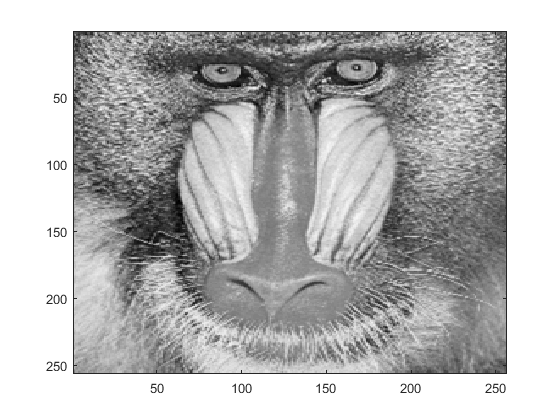}
\end{tabular}
\hspace{-0.5cm} 
&
\begin{tabular}{c} 
\small Peppers \\
\hspace{-0.5cm} 
\myinclude{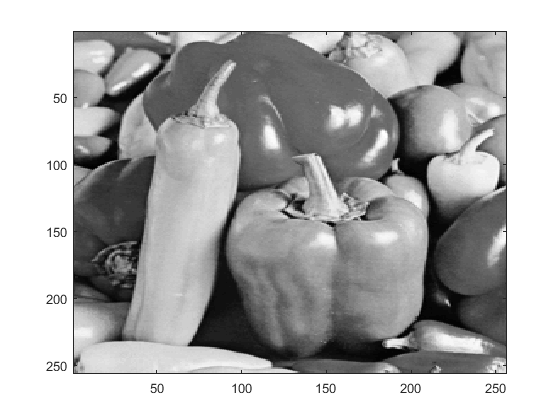}
\end{tabular}
\end{tabular}
\caption{}
\label{Fig: clean images}
\end{figure*}

\begin{figure*}[!htbp]
\centering
\vspace*{-0.5cm}
\begin{tabular}{c c c}
\centering
\begin{tabular}{c} 
\small Barbara \vspace{-0.25cm} \\ ($\sigma = 50$)\\
\hspace{-0.5cm} 
\myinclude{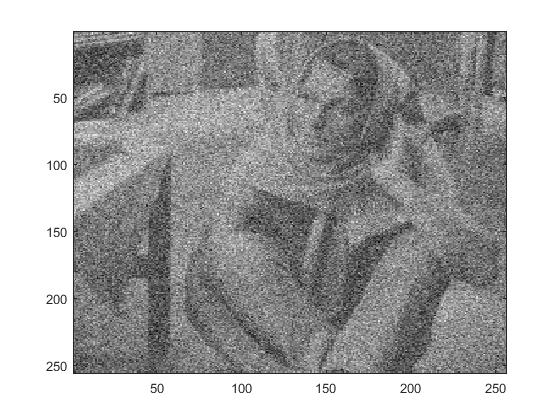}
\end{tabular}
\hspace{-0.5cm} 
&
\begin{tabular}{c} 
\small Cameraman \vspace{-0.25cm} \\ ($\sigma = 50$)\\
\hspace{-0.5cm} 
\myinclude{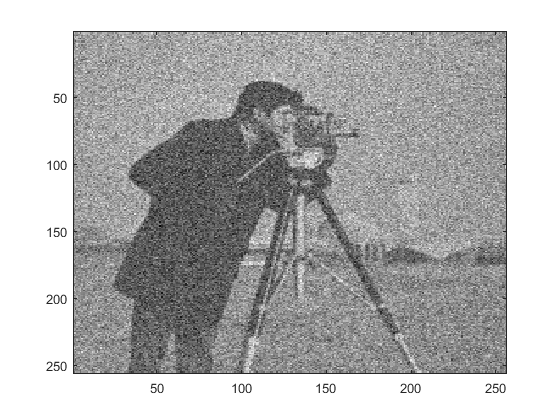}
\end{tabular}
\hspace{-0.5cm} 
&
\begin{tabular}{c} 
\small Einstein \vspace{-0.25cm} \\ ($\sigma = 50$)\\
\hspace{-0.5cm} 
\myinclude{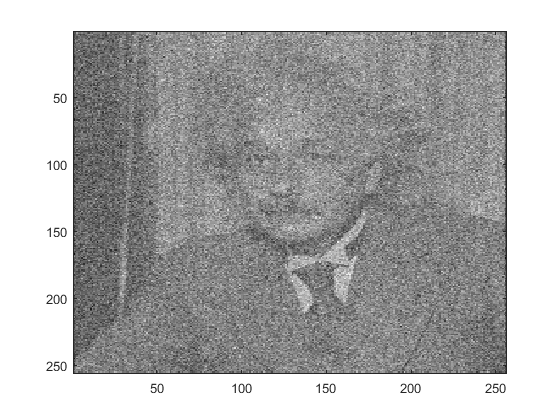}
\end{tabular}
\end{tabular}
\vspace{0.2cm}\\
\begin{tabular}{c c c}
\centering
\begin{tabular}{c} 
\small House \vspace{-0.25cm} \\ ($\sigma = 50$)\\
\hspace{-0.5cm} 
\myinclude{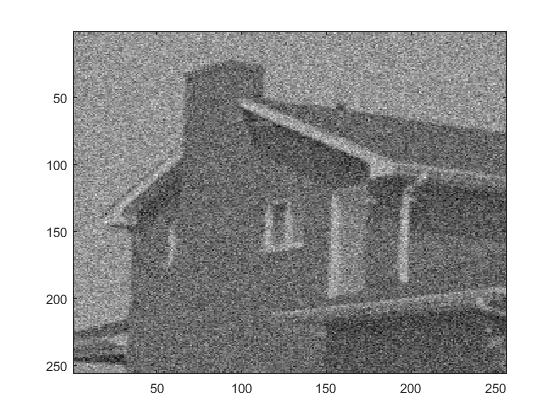}
\end{tabular}
\hspace{-0.5cm} 
&
\begin{tabular}{c} 
\small Jetplane \vspace{-0.25cm} \\ ($\sigma = 50$) \\
\hspace{-0.5cm} 
\myinclude{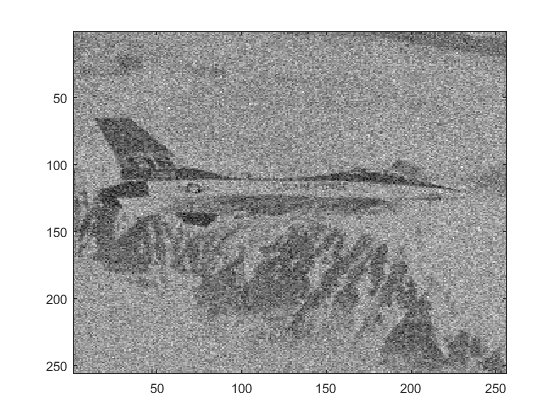}
\end{tabular}
\hspace{-0.5cm} 
&
\begin{tabular}{c} 
\small Lake \vspace{-0.25cm} \\ ($\sigma = 50$) \\
\hspace{-0.5cm} 
\myinclude{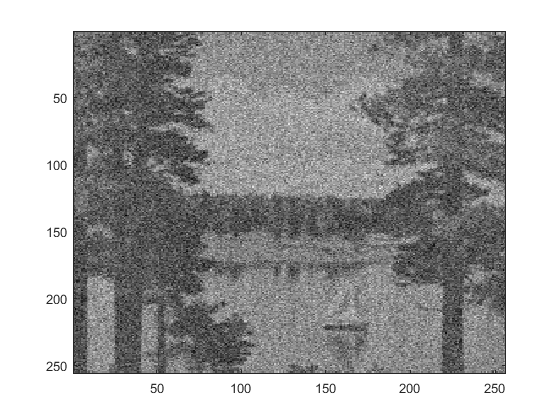}
\end{tabular}
\end{tabular}
\vspace{0.2cm}\\
\begin{tabular}{c c c}
\centering
\begin{tabular}{c} 
\small Lena \vspace{-0.25cm} \\ ($\sigma = 50$) \\
\hspace{-0.5cm} 
\myinclude{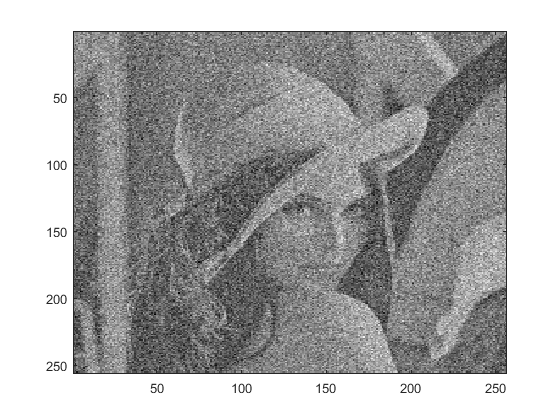}
\end{tabular}
\hspace{-0.5cm} 
&
\begin{tabular}{c} 
\small Mandrill \vspace{-0.25cm} \\ ($\sigma = 50$) \\
\hspace{-0.5cm} 
\myinclude{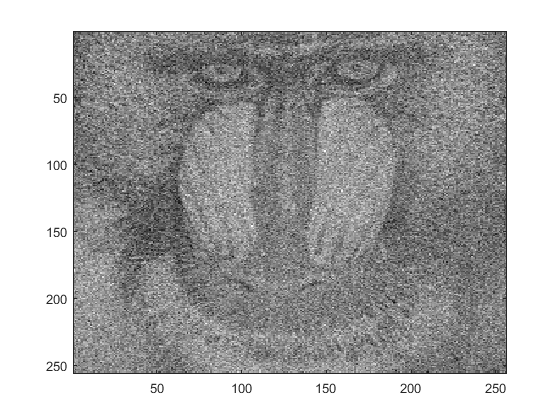}
\end{tabular}
\hspace{-0.5cm} 
&
\begin{tabular}{c} 
\small Peppers \vspace{-0.25cm} \\ ($\sigma = 50$) \\
\hspace{-0.5cm} 
\myinclude{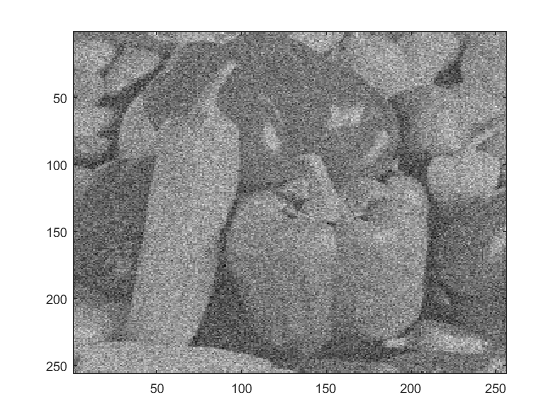}
\end{tabular}
\end{tabular}
\caption{}
\label{Fig: noisy images-sig50}
\end{figure*}

\begin{figure*}[!hbp]
\centering
\vspace*{-0.5cm}
\begin{tabular}{c c c}
\centering
\begin{tabular}{c} 
\small Barbara \vspace{-0.25cm} \\ ($\sigma = 70$)\\
\hspace{-0.5cm} 
\myinclude{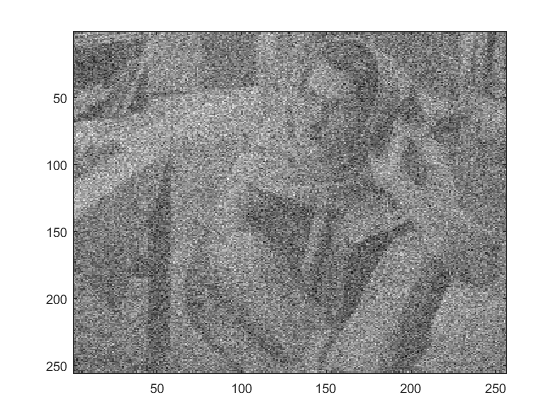}
\end{tabular}
\hspace{-0.5cm} 
&
\begin{tabular}{c} 
\small Cameraman \vspace{-0.25cm} \\ ($\sigma = 70$)\\
\hspace{-0.5cm} 
\myinclude{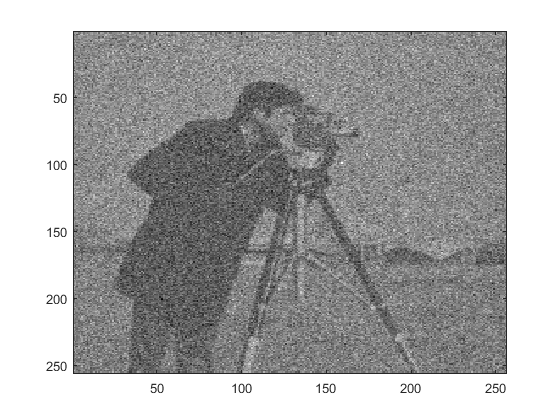}
\end{tabular}
\hspace{-0.5cm} 
&
\begin{tabular}{c} 
\small Einstein \vspace{-0.25cm} \\ ($\sigma = 70$)\\
\hspace{-0.5cm} 
\myinclude{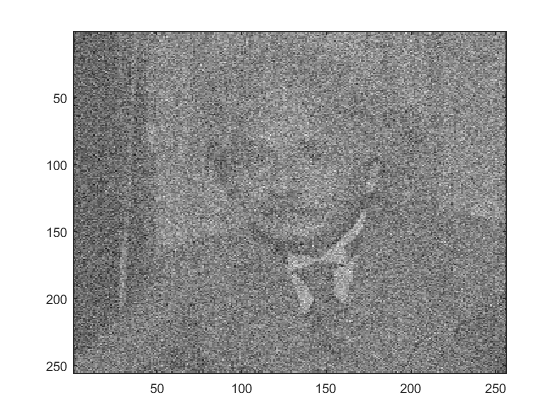}
\end{tabular}
\end{tabular}
\vspace{0.2cm}\\
\begin{tabular}{c c c}
\centering
\begin{tabular}{c} 
\small House \vspace{-0.25cm} \\ ($\sigma = 70$)\\
\hspace{-0.5cm} 
\myinclude{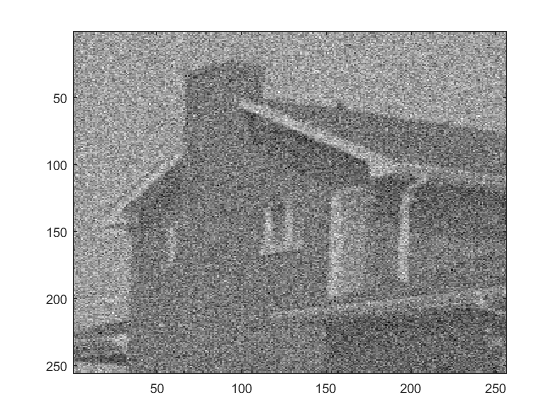}
\end{tabular}
\hspace{-0.5cm} 
&
\begin{tabular}{c} 
\small Jetplane \vspace{-0.25cm} \\ ($\sigma = 70$) \\
\hspace{-0.5cm} 
\myinclude{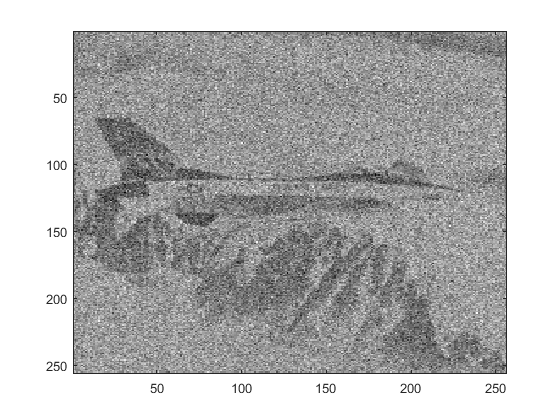}
\end{tabular}
\hspace{-0.5cm} 
&
\begin{tabular}{c} 
\small Lake \vspace{-0.25cm} \\ ($\sigma = 70$) \\
\hspace{-0.5cm} 
\myinclude{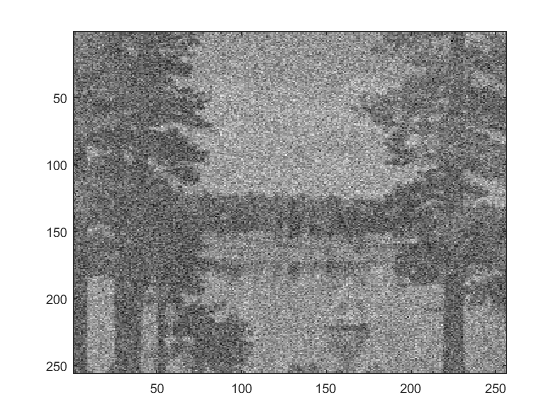}
\end{tabular}
\end{tabular}
\vspace{0.2cm}\\
\begin{tabular}{c c c}
\centering
\begin{tabular}{c} 
\small Lena \vspace{-0.25cm} \\ ($\sigma = 70$) \\
\hspace{-0.5cm} 
\myinclude{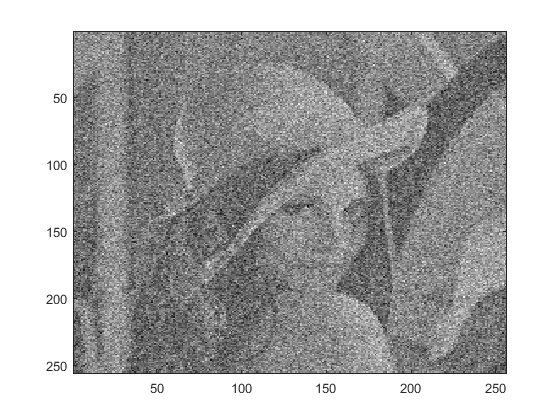}
\end{tabular}
\hspace{-0.5cm} 
&
\begin{tabular}{c} 
\small Mandrill \vspace{-0.25cm} \\ ($\sigma = 70$) \\
\hspace{-0.5cm} 
\myinclude{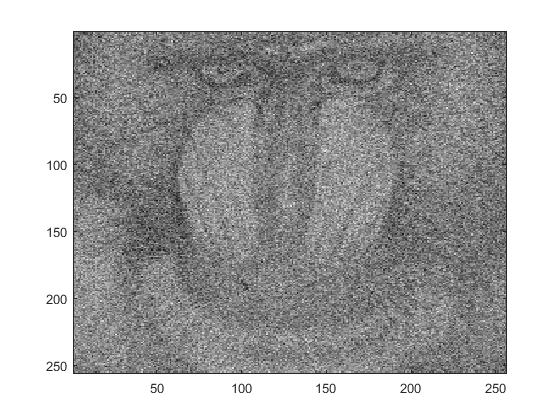}
\end{tabular}
\hspace{-0.5cm} 
&
\begin{tabular}{c} 
\small Peppers \vspace{-0.25cm} \\ ($\sigma = 70$) \\
\hspace{-0.5cm} 
\myinclude{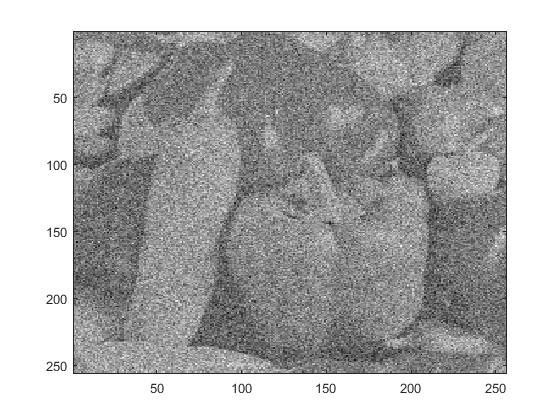}
\end{tabular}
\end{tabular}
\caption{}
\label{Fig: noisy images-sig70}
\end{figure*}

\begin{figure*}[!htbp]
\centering
\vspace*{-0.5cm}
\begin{tabular}{c c c}
\centering
\begin{tabular}{c} 
\small $\sigma = 50$ \\
\hspace{-0.5cm} 
\myinclude{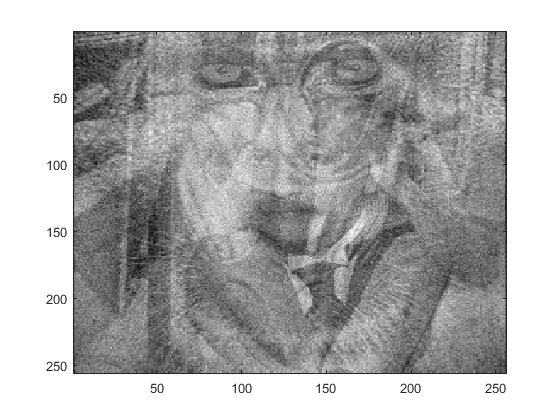}
\end{tabular}
\hspace{-0.5cm} 
&
\begin{tabular}{c} 
\small $\sigma = 60$ \\
\hspace{-0.5cm} 
\myinclude{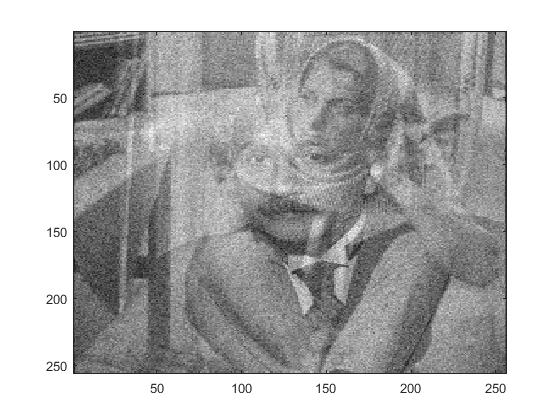}
\end{tabular}
\hspace{-0.5cm} 
&
\begin{tabular}{c} 
\small $\sigma = 70$ \\
\hspace{-0.5cm} 
\myinclude{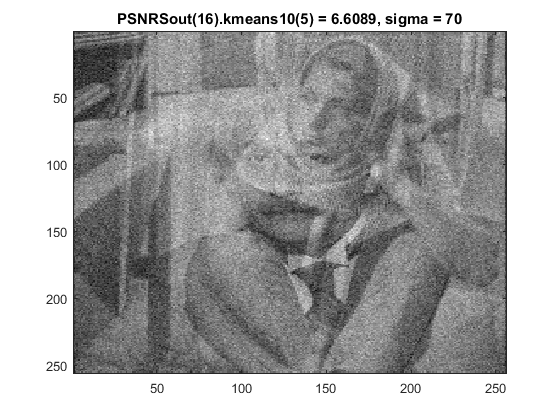}
\end{tabular}
\end{tabular}
\caption{Examples of centroids estimated by K-means10 and resulting from the mixing of several images. The PSNRs of such centroids are respectively $3.39$, $5.54$ and $6.61$ (see Table \ref{Tab: PSNRS})}
\label{Fig: mingled centroids}
\end{figure*}

\begin{figure*}[!htbp]
\centering
\includegraphics[width=7.5cm, height=6.0cm]{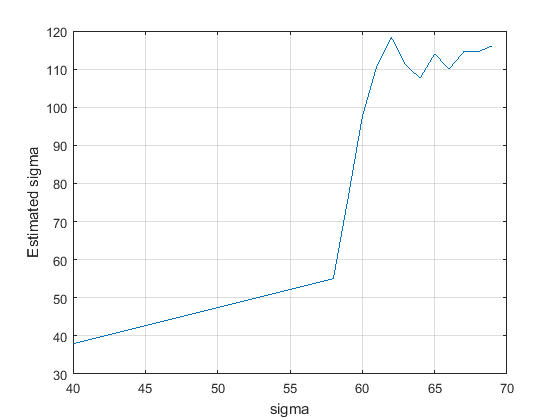}
\caption{Estimate of the noise standard deviation calculated after clustering vs. true value of this noise standard deviation. For $\sigma \in \{40, 41, \ldots, 51, 51, \ldots, 69\}$, $N = 10$ noisy versions of each clean image of Figure \ref{Fig: clean images} are generated. The resulting images, after shuffling, are presented to \algo~for clustering and estimation of the noise standard deviation. Below $\sigma_{\text{lim}}$, the estimate is always less than the actual value. Beyond $\sigma_{\text{lim}}$, the estimate becomes to increase significantly. This increase could be used by the algorithm to assess itself the relevance of its clustering.}
\label{Fig: est_sigma}
\end{figure*}

\begin{table}[!]
\caption{PSNRs of the centroids estimated by \algo, $K$-mean10 and K-means100, for various noise standard deviations. A cross at a junction between a colum and a row indicates that the image to the left is not the closest one to any of the centroids estimated by the algorithm specified by the column. We then say that this algorithm did not retrieve the image. Two values in the same case indicate that the image naming the row was the closest one to two estimated centroids. For instance, when $\sigma = 50$, K-means10 returned $2$ estimated centroids, to which the closest clean image was `Lake'.  K-means10 also yields $2$ other estimates for which the closest clean image was `Lena'. Consequently, two clean images, namely `Barbara' and `Mandrill', were never found to be the closest ones to centroids estimated by K-means10. Regarding \algo, it may fail to retrieve all the centroids. For instance, `Barbara' was not retrieved by \algo~among the estimated centroids. However, the PSNRs of the images actually retrieved by \algo~are close to those yielded by K-means100 for $\sigma = 50$. Finally, values in boldface correspond to estimated centroids that appear as noisy mixtures of several other images (see Figure \ref{Fig: mingled centroids}). These PSNRs are significantly smaller than the other ones.}
\tiny
\begin{tabular}{ | c || c | c | c || c | c | c || c | c | c || c | c | c |}
\cline{1-10}
\hline
            & \multicolumn{3}{ c || }{PSNR ($\sigma = 50$) } 
            & \multicolumn{3}{ c || }{PSNR ($\sigma = 60$) } 
            & \multicolumn{3}{ c || }{PSNR ($\sigma = 70$) } \\
\hline
            & \algo  & K-means10  & K-means100 
            & \algo & K-means10  & K-means100 
            & \algo & K-means10  & K-means100             
            \\            
\hline
\hline
Barbara       & 10 & X & 10.01 & X & \textbf{5.54} & 9.96 & X & \textbf{6.61} & 10 \\
\hline
Cameraman  & 9.98  & 9.98  & 9.98 & 9.98 & 9.98 & 9.98 & 10 & 10 & 9.99 \\
\hline
Einstein       & 10.04  & \textbf{3.39} & 10.04 & X & X & 10.01 & X & X & 10\\
\hline
House         & 10.01  & 10.02  & 10.02  & 10.01 & 5.99/7.77 & 10.01 & 10.04 & 6.97/7.01 & 10.03 \\
\hline
Jetplane      &  10.02 & 10.02  & 10.02  & 9.96 & 9.96 & 9.96 & 10.04 & 10.04 & 10.04\\
\hline
Lake           &  9.98 & 8.47/4.76 & 9.98  & 9.98 & 9.98 & 9.98 & 10.00 & 10.00 & 10.00 \\
\hline
Lena           & 10.03  & 6/7.78 & 10.03  & 9.98 & 9.98 & 9.98 & X & 10.00 & 10.00 \\
\hline
Mandrill      & 10.01  & X & 10.01  & 10.03  & 10.03 & 10.03 & X & 9.96 & 9.96\\
\hline
Peppers      & 10.02 & 10.02 & 10.02 & 9.96 & 9.96 & 9.96 & X & 10.02 & 10.00 \\
\hline
\end{tabular}
\label{Tab: PSNRS}
\end{table}


\end{longversion}

\begin{shortversion}
\subsubsection{Other simulation results}
We also obtained other simulation results we do not present in this version for space reasons.
We first evaluated the performance of \algo~where the Wald p-value kernel is replaced with the Gaussian kernel (see Section~\ref{sec:mestimator}). Although the Gaussian kernel can yield performance similar to Wald p-value kernel, its main drawback is that the parameter $\beta$ must be manually adjusted for each dimension and each $\sigma$, which is not desirable in a distributed setting. Second, we applied \algo~with the Wald p-value kernel to noisy images. The high dimension $256 \times 256$ of the images do not affect the performance of our algorithm. In addition, we observed that \algo~still works in far from asymptotic conditions with $9$ clusters or images and $10$ images per cluster.
For more details, see the extended version of the paper on ArXiv. 
\end{shortversion}

\subsection{Decentralized algorithm}

\begin{onecol}
\begin{figure*}[t]
\begin{center}
  \subfloat[~]{ \includegraphics[width=.48\linewidth]{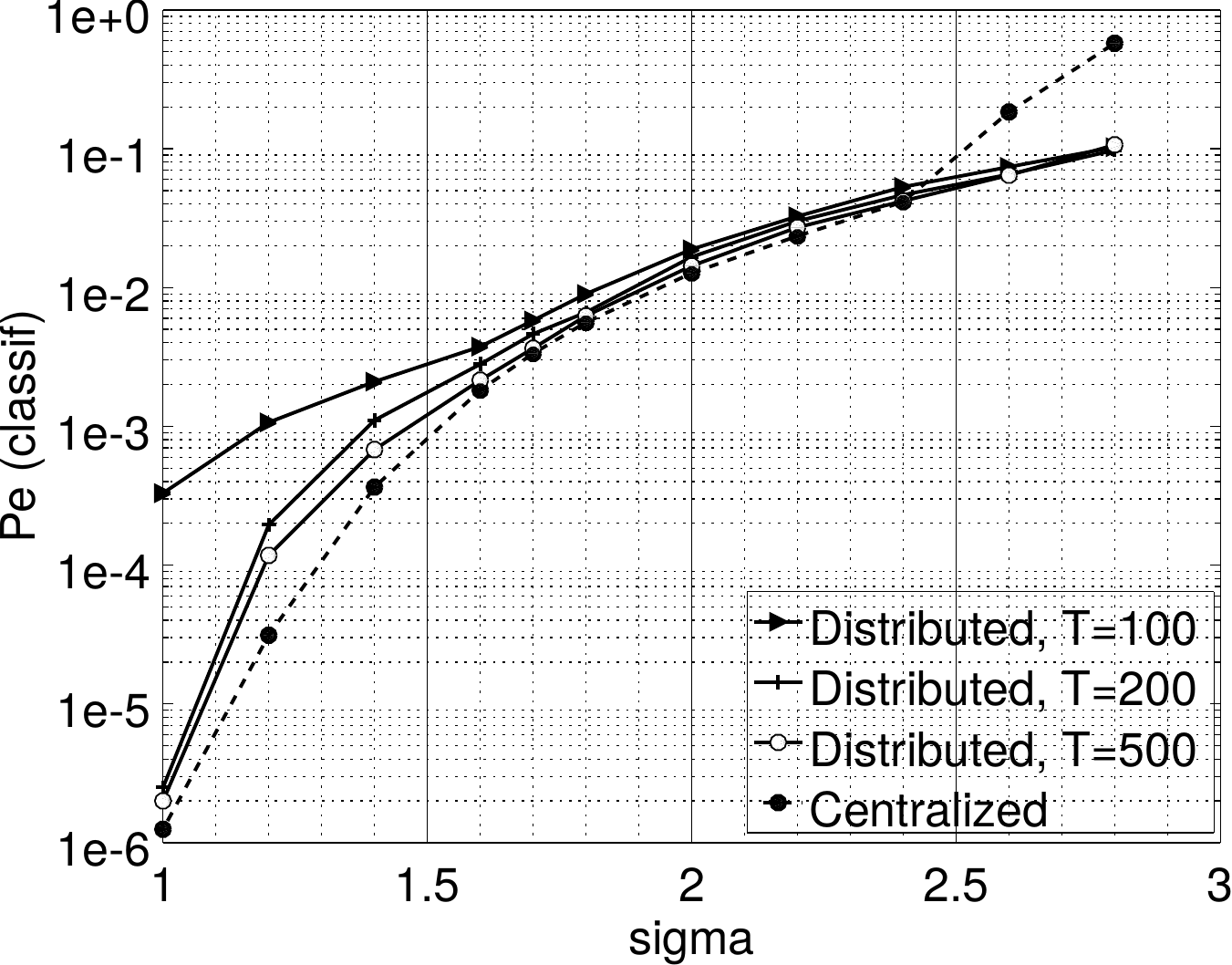}}
  \subfloat[~]{ \includegraphics[width=.48\linewidth]{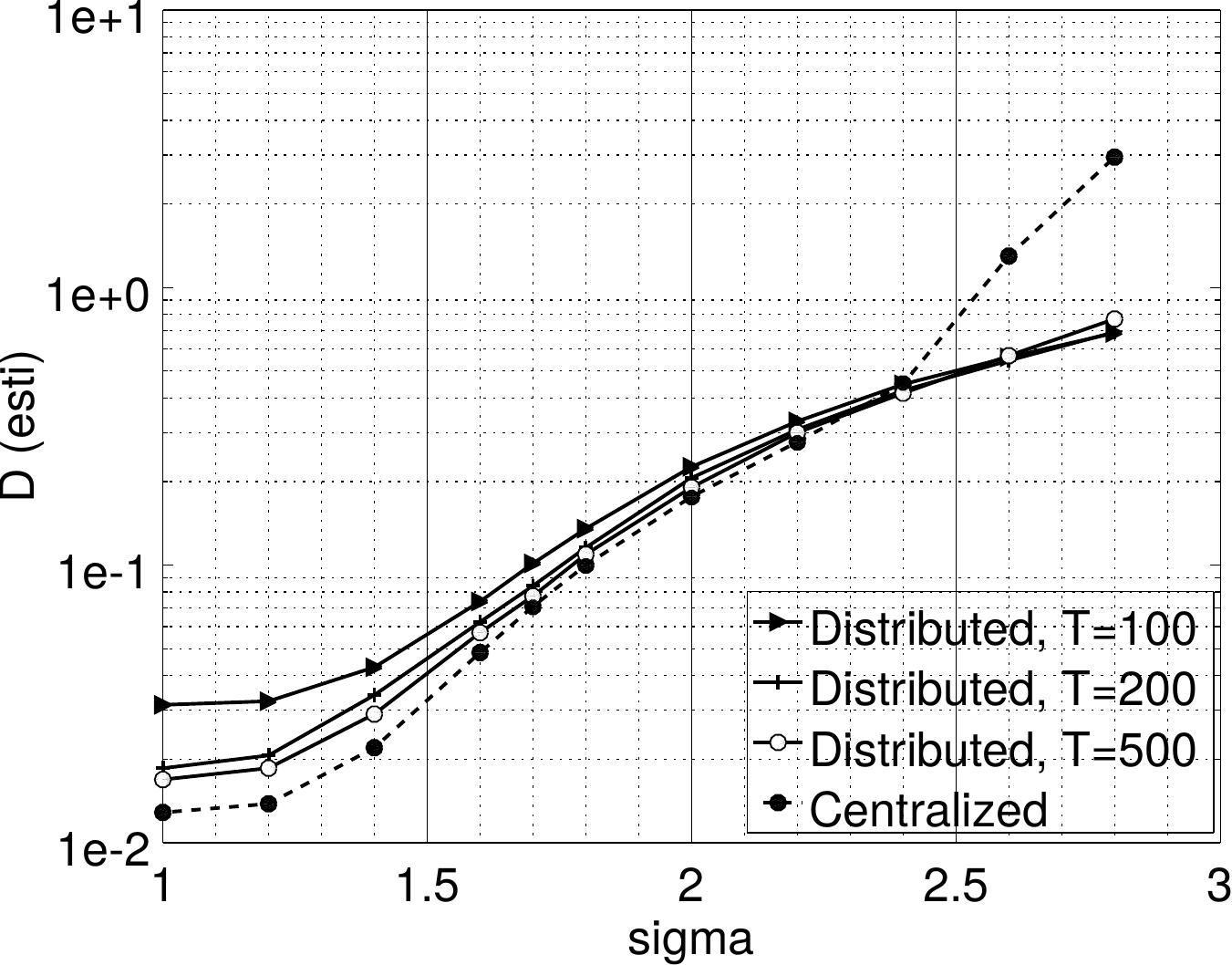}}
  \vspace{-0.2cm}
\end{center}
\caption{Performance evaluation of \dalgo~for various values of $T$, with $\dim = 2$, $K = 4$, $N = 400$: (a) Classification error probability (b) Estimation distortion.\vspace{-12pt}}
\label{fig:res_distrib}
\end{figure*}
\end{onecol}

\begin{twocol}
\begin{figure*}[t]
\begin{center}
  \subfloat[~]{ \includegraphics[width=.32\linewidth]{Figures/Pe_distrib.pdf}}
  \subfloat[~]{ \includegraphics[width=.32\linewidth]{Figures/D_distrib.pdf}}
  \vspace{-0.2cm}
\end{center}
\caption{Performance evaluation of \dalgo, with $\dim = 2$, $K = 4$, $N = 400$: (a) Classification error probability (b) Estimation distortion.\vspace{-12pt}}
\label{fig:res_distrib}
\end{figure*}
\end{twocol}

We now evaluate the performance of the decentralized version \dalgo~of our algorithm.
In this part, we consider again the parameters $\dim = 2$, $K = 4$, $N = 400$, and the data are generated as in Section~\ref{subsec:exp_dim2}.
We address the influence of the number of the time slots $T$ allocated to the estimation of each centroid. We proceed by comparing, when this parameter varies, the performance of \dalgo~to that of \algo. In all the considered cases, the parameter $L$, which represents the number of received partial sums before updating the centroid is chosen as $L=T/10$. Of course, in all the subsequent experiments, M-estimation of the centroids is performed by using the Wald p-value kernel. The results are presented in Figures~\ref{fig:res_distrib} (a) and (b). 
We see that it is possible to choose a value of $T$ such as the decentralized algorithm undergoes only a limited performance degradation compared to the centralized algorithm. 
A small value $T=100$ induces a rather important performance loss, whereas higher values $T=300$ and $T=500$ provide almost the same performance as \algo, both in terms of classification error probability and estimation distortion. These two figures hence permit to conclude that \dalgo~performs almost as well as \algo, which means that \dalgo~is also competive compared to the centralized versions of K-means, without suffering from the same drawbacks (no need to estimate the number of clusters, no need for replicates).

\section{Conclusion \& perspectives}\label{sec:conclusion}
In this paper, we have introduced a clustering algorithm for statistical signal processing applications. This algorithm does not need to know the number of clusters and is less sensitive to initialization that the K-means algorithm.
These properties make our algorithm suitable for decentralized clustering and this is why we also proposed a distributed version of the algorithm.
It is worth emphasizing that all the steps of our algorithm were theoretically derived and analyzed.  
Both the theoretical analysis and the simulations assess the efficiency of the centralized and decentralized algorithms. 

From a more general point of view, we have introduced a new methodology for clustering. This methodology relies on a statistical model of the measurements. In this methodology, clustering is performed via M-estimation with score function derived from the p-value of a Wald test that is optimal for the considered model. This methodology can be adapted to other signal models, which may allow for adressing more general clustering problems that standard algorithms such as K-means can hardly handle. For instance, we could consider heterogeneous sensors that would collect measurement vectors with different variances. We might also introduce a more general notion of cluster, of which centroids would be random with bounded variations less than a value $\tol \geqslant 0$ in norm (this definition corresponds to $\tau\geqslant 0$ in the computations in Appendices).

\section*{Appendices}

\appendices
Given $\tol \in [0,\infty)$, let $\tsub(\tol)$ be the unique real value such that $\Marcum(\tol,\tsub(\tol)) = \level$. In particular, $\TheThreshold{} = \tsub(0)$. The results \pastorv{stated in} Appendices \ref{Sec: RDT pvalue},~\ref{App: robustness} and \ref{App: asymptotic behaviors} below involve $\tsub(\tol)$ and $w = \Marcum( \tol, \bullet)$ instead of merely $\TheThreshold{}$ and $w = \Marcum( 0, \bullet)$, respectively. The application of these results in the main core of the paper thus concerns the particular case $\tol = 0$. The reason why we present these results for any $\tol \in [0,\infty)$ is twofold. First, the proof of these results in the particular case where $\tol = 0$ would not be significantly simpler than that for the general case. Second, having results that hold for any $\tol \in [0,\infty)$ opens prospects described in the concluding section of the paper.

\section{p-value of Wald test}
\label{Sec: RDT pvalue}
\pastorv{For reasons evoked just above, we consider a more general case than that actually needed in the paper.} As a preliminary result, we need the following lemma.
\begin{Lemma}
\label{Lemma: tsub decreases}
Given $\pastorv{\tol} \in [ \, 0 \, , \, \infty \, )$, the map $\level \in ( \, 0 \, , \, 1 \, ] \mapsto \tsub(\pastorv{\tol}) \in [ \, 0 \, , \infty \, )$ is strictly decreasing.
\end{Lemma}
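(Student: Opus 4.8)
The plan is to realize $\level\mapsto\tsub(\pastorv{\tol})$ as the inverse of a strictly decreasing bijection, and then to invoke the elementary fact that the inverse of a strictly decreasing function is strictly decreasing. First I would fix $\pastorv{\tol}\in[0,\infty)$ and study the map $b\mapsto\Marcum(\pastorv{\tol},b)$ on $[0,\infty)$. By the definition of the generalized Marcum function \cite[Eq.~(8)]{Sun2010}, one has $\Marcum(\pastorv{\tol},b)=\Proba{\|\Zbm\|>b}$ where $\Zbm\thicksim\Ncal(\mathbf a,\Id)$ for any $\mathbf a\in\Rset^\dim$ with $\|\mathbf a\|=\pastorv{\tol}$; in other words, $b\mapsto\Marcum(\pastorv{\tol},b)$ is the survival function of the (absolutely continuous) noncentral $\chi$ law with $\dim$ degrees of freedom and noncentrality parameter $\pastorv{\tol}$. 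Since this law has a density that is strictly positive on $(0,\infty)$, its survival function is continuous and strictly decreasing on $[0,\infty)$, with $\Marcum(\pastorv{\tol},0)=1$ and $\Marcum(\pastorv{\tol},b)\to 0$ as $b\to\infty$.

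From this I would conclude that $b\mapsto\Marcum(\pastorv{\tol},b)$ is a homeomorphism of $[0,\infty)$ onto $(0,1]$. In particular, for each $\level\in(0,1]$ there is exactly one $b\in[0,\infty)$ with $\Marcum(\pastorv{\tol},b)=\level$, which both justifies that $\tsub(\pastorv{\tol})$ is well defined and accounts for the range stated in the lemma; moreover $\level\mapsto\tsub(\pastorv{\tol})$ is precisely the inverse of this homeomorphism. The inverse of a strictly decreasing continuous bijection being itself strictly decreasing (and continuous), the map $\level\in(0,1]\mapsto\tsub(\pastorv{\tol})\in[0,\infty)$ is strictly decreasing, which is the assertion. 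Should one prefer a differential version, one can instead differentiate the identity $\Marcum(\pastorv{\tol},\tsub(\pastorv{\tol}))=\level$ with respect to $\level$ and use that $\partial_b\Marcum(\pastorv{\tol},b)$ equals minus the strictly positive noncentral $\chi$ density at $b$, whence $\tfrac{\d}{\d\level}\tsub(\pastorv{\tol})<0$.

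The only nontrivial ingredient, and hence the step I expect to be the main obstacle, is the strict monotonicity of $b\mapsto\Marcum(\pastorv{\tol},b)$, equivalently the strict positivity on $(0,\infty)$ of the noncentral $\chi$ density. For $\pastorv{\tol}=0$ this is immediate from $\Marcum(0,b)=1-\Fbb_{\chi^2_\dim(0)}(b^2)$ together with the strict positivity of the central $\chi^2_\dim$ density; for general $\pastorv{\tol}$ it follows from the Poisson-mixture series representation of $\Marcum$, whose terms are positive multiples of strictly decreasing central $\chi^2$ survival functions, so that the sum is strictly decreasing as well. Alternatively, one may simply cite the known monotonicity properties of the generalized Marcum $Q$-function, which give strict monotonicity in the second argument directly.
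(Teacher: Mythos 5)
Your proposal is correct and follows essentially the same route as the paper: the paper also deduces strict monotonicity of $\level \mapsto \tsub(\tol)$ from the fact that it inverts the strictly decreasing map $b \mapsto \Marcum(\tol,b)$ (given $\level < \level'$, the defining identities force $\tsub(\tol) > \tsubprime(\tol)$). The only difference is that the paper simply asserts the strict monotonicity of $\Marcum(\tol,\mycdot)$, whereas you justify it via the positivity of the noncentral $\chi$ density; this is a harmless elaboration, not a change of method.
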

\begin{proof}
Let $\radius$ be some element of $[0,\infty)$ and consider two elements $\level$ and $\level'$ of $(0,1]$. We have $\Marcum(\radius , \tsub(\radius)) = \level$ and $\Marcum(\radius , \tsubprime(\radius)) = \level'$. If $\level < \level'$, we thus have $\Marcum(\radius , \tsub(\radius)) < \Marcum(\radius , \tsubprime(\radius))$, which implies that $\tsub(\radius) > \tsubprime(\radius)$ since $\Marcum(\radius , \mycdot)$ is strictly decreasing.
\end{proof}
\medskip
\color{black}
\pastorv{Let $\tol \in [0,\infty)$. Given $\ybm \in \Rset^\dim$, set $\level' = \Marcum(\tol / \sigma_0, \pastorv{\| \ybm \| / \sigma_0})$.}
\pastorv{Since $\level' \in (0,1)$,
we have $$\Marcum(\tol / \sigma_0,\lambda_{\level'}(\tol / \sigma_0)) = \level'$$ by definition of $\lambda_{\level'}(\tol / \sigma_0)$.
It then follows from the bijectivity of $\Marcum(\tol / \sigma_0,\mycdot)$ and the foregoing equalities that} $\lambda_{\level'}(\tol/ \sigma_0) = \| \ybm \| / \sigma_0$.
\pastorv{According to Lemma \ref{Lemma: tsub decreases}, we have:}
\begin{onecol}
$$
\begin{array}{lll}
\mybig \{ \level \in (0,1): \tsub(\tol / \sigma_0) < \| \ybm \| / \sigma_0 \mybig \} 
& = & \pastorv{\mybig \{ \level \in (0,1): \tsub(\tol / \sigma_0) < \lambda_{\level'}(\tol/\sigma_0) \mybig \}} \myvspace \\
& = & (\level',1) 
\end{array}
$$
\end{onecol}
\begin{twocol}
\begin{align}
\nonumber
\mybig \{ \level \in (0,1): &~\tsub(\tol / \sigma_0) < \| \ybm \| / \sigma_0 \mybig \} \\
\nonumber
& = \pastorv{\mybig \{ \level \in (0,1): \tsub(\tol / \sigma_0) < \lambda_{\level'}(\tol/\sigma_0) \mybig \}} \\
\nonumber
& = (\level',1) 
\end{align}
\end{twocol}
Therefore, $\level' = \inf \mybig \{ \level \in (0,1): \Tfrak_{\sigma_0 \tsub(\tol/\sigma_0)}(\ybm) = 1 \mybig \}$,
\pastorv{where $\Tfrak_{\sigma_0 \tsub(\tol/\sigma_0)}$ is defined according to (\ref{Eq:Thresholding test from above}) for all $\ybm \in \Rset^\dim$ by setting:
\begin{equation}
\Tfrak_{\sigma_0 \tsub(\tol/\sigma_0)}(\ybm) = \left \{
\begin{array}{lll}
0 & \hbox{ if } & \| \ybm \| \leqslant \sigma_0 \tsub(\tol/\sigma_0) \\
1 & \hbox{ if } & \| \ybm \| > \sigma_0 \tsub(\tol/\sigma_0).
\end{array}
\right.
\nonumber
\end{equation}
According to \cite{RDT}, $\Tfrak_{\sigma_0 \tsub(\tol/\sigma_0)}$ satisfies several optimality criteria for testing whether $\| \Ctr \| \leqslant \tol$ or not when we observe $\Obs \thicksim \Ncal(\Ctr, \Id)$. Therefore, 
\begin{equation}
\label{Eq:pvalue}
\pval{\ybm}{\sigma_0} \stackrel{\text{def}}{=} \Marcum(\tol / \sigma_0, \pastorv{\| \ybm \| / \sigma_0})
\end{equation}
can be regarded as the p-value of $\Tfrak_{\sigma_0 \tsub(\tol/\sigma_0)}$ for testing the hypothesis $\| \Ctr \| \leqslant \tol$.} \pastorv{The Wald test of Section \ref{subsec: pvalue} for testing $\Ctr = 0$ or not, when the observation is $\Obs \thicksim \Ncal(\Ctr, \Id)$, then corresponds to the particular case $\tol = 0$, for which we have  $\tsub(\tol) = \TheThreshold{\sigma_0}$ and $\pval{\ybm}{\sigma_0} = \Marcum(0, \| \ybm \| / \sigma_0)$.}
%
%


\section{Robustness of the M-estimation function}
\label{App: robustness}

In \algo~and \dalgo, centroids are calculated by M-estimation since, given a cluster, data from other clusters can be regarded as outliers. Our claim is then that the \pastorv{weight function} $w$, specified by \eqref{Eq: our weight}, is particularly suitable because it is the p-value associated with an optimal test, \pastorv{namely the Wald test}, aimed at deciding whether two observations lie within nearby clusters or not. As such, $w$ can be expected to be discriminating enough between data from different clusters. In this section, we thus analyze to what extent the M-estimator \pastorv{based on this weight function} is actually robust to outliers. This analysis can be carried out by studying the influence function of the estimator.

\pastorv{As announced at the beginning of this appendices, we carry out the computation in the more general case where the weight function is $w = \Marcum( \tol, \bullet)$, which requires handling $\tsub(\tol)$ instead of merely $\TheThreshold{}$. The presence of $\tol$ does not complexify the analysis and opens more general prospects.}


Following \cite[Definition 5, p. 230]{Hampel86}, the general case of an $M$-estimator of some parameter $\Ctr \in \Theta \subset \Rset^\dim$ given cumulative distribution function (cdf) $\Fbb$, is the solution $\Tbm(\Fbb)$ in $\tbm$ to the equation:
\begin{equation}
\label{Eq: integral equation for M estimation}
\displaystyle \int \Psibm(\ybm,\tbm) \d \Fbb(\ybm) = 0
\end{equation}
with $\Psibm: \Rset^\dim \times \Theta \rightarrow \Rset^\dim$. 
In particular, given $d$-dimensional vectors 
$\Obs_1, \ldots, \Obs_N \stackrel{\text{iid}}{\thicksim} \Gbb$ where $\Gbb$ is a given cdf, we can consider the empirical probability distribution 
$\Gbb_N^* = \frac{1}{N} \sum_{n=1}^N \delta_{\Obs_n},$ 
which puts mass $1/N$ at each $\Obs_1, \ldots, \Obs_N$. The solution $\Tbm(\Gbb_n^*)$ in $\tbm$ to Eq. (\ref{Eq: integral equation for M estimation}) with $\Fbb = \Gbb_n^*$ is the standard $M$-estimator for the sample $\Obs_1, \cdots, \Obs_N$ of distribution $\Gbb$.

Now, choose $\Psibm(\ybm,\tbm) = \Psi(\ybm - \tbm)$, where $\Psi$ is given by Eq. (\ref{Eq: Our Psi}). Set $\tbm = (t_1, \ldots, t_\dim)^\transpose$, $\ybm = (y_1, \ldots, y_\dim)^\transpose$ and $\Psi = (\Psi_1, \ldots, \Psi_\dim)$. For the general $M$-estimator $\Tbm(\Fbb)$ obtained by solving Eq. (\ref{Eq: integral equation for M estimation}), the differentiability of $\Psi$ induces that we can define the $\dim \times \dim$ matrix
$\Bbm = (B_{j,k})_{1 \leqslant j, k \leqslant \dim}$ with:
\begin{equation}
 B_{j,k} = - \displaystyle \int \left[ \left. \frac{\partial \Psi_j(\obs,\tbm)}{\partial t_k}\right|_{\tbm = \Tbm(\Fbb)}\right] \d \Fbb(\ybm)
\end{equation}
Since we have:
\begin{onecol}
\begin{equation}
\nonumber
\frac{\partial \Psi_j(\obs,\tbm)}{\partial t_k}
= 
\left \{
\begin{array}{lll}
- \, \dfrac{(y_j - t_j)(y_k - t_k)}{\| \obs - \tbm \|} Q'_{d/2}(\tol, \| \obs - \tbm \|), & \text{if} & j \neq k \myvspace \\
- \, Q_{d/2}(\tol, \| \obs - \tbm \|) - \dfrac{(y_k - t_k)^2}{\| \obs - \tbm \|} Q'_{d/2}(\tol, \| \obs - \tbm \|) , & \text{if} & j = k
\end{array}
\right.
\end{equation}
\end{onecol}
\begin{twocol}
\small
\begin{align}
\nonumber
& \frac{\partial \Psi_j(\obs,\tbm)}{\partial t_k} = \\
\nonumber
& 
\left \{
\begin{array}{lll}
- \dfrac{(y_j - t_j)(y_k - t_k)}{\| \obs - \tbm \|} Q'_{d/2}(\tol, \| \obs - \tbm \|), & \text{if $j \neq k$} \\
- Q_{d/2}(\tol, \| \obs - \tbm \|) - \dfrac{(y_k - t_k)^2}{\| \obs - \tbm \|} Q'_{d/2}(\tol, \| \obs - \tbm \|) , & \text{if $j = k$}
\end{array}
\right.
\end{align}
\normalsize
\end{twocol}
the matrix $\Bbm$ can be expressed as:
\begin{onecol}
\begin{equation}\label{eq:Bderivative}
\Bbm 
 = \displaystyle \int Q_{d/2}(\tol, \| \obs - \Tbm(\Fbb) \|) \d \Fbb(\ybm) \Id + \displaystyle \int Q'_{d/2}(\tol, \| \obs - \Tbm(\Fbb) \|) (\ybm - \Tbm(\Fbb))(\ybm - \Tbm(\Fbb))^\transpose \d \Fbb(\ybm)
\nonumber
\end{equation}
\end{onecol}
\begin{twocol}
\small
\begin{align}
\nonumber
\Bbm = \displaystyle \int & Q_{d/2}(\tol, \| \obs - \Tbm(\Fbb) \|) \d \Fbb(\ybm) \Id \\
\nonumber
& + \displaystyle \int Q'_{d/2}(\tol, \| \obs - \Tbm(\Fbb) \|) (\ybm - \Tbm(\Fbb))(\ybm - \Tbm(\Fbb))^\transpose \d \Fbb(\ybm)
\nonumber
\end{align}
\normalsize
\end{twocol}
The first integral in the right hand side (rhs) of the second equality above is positive. Therefore, the eigenvalues of the symmetric matrix $\Bbm$ are all positive and $\Bbm$ is invertible. According to \cite[Eq. (4.2.9), p. 230]{Hampel86}, the invertibility of $\Bbm$ \pastorv{makes it possible to calculate the} influence function as:
\begin{align}\notag
  \IfF(\obs;T,F) 
& = \Bbm^{-1}\Psibm(\obs,\Tbm(\Fbb)) \myvspace \\ 
& = \Bbm^{-1} (\obs - \Ctr) Q_{d/2}(\tol, \| \obs - \Ctr \|)
\nonumber
 \end{align}
Since the Marcum function is continuous, the influence function $\IfF(\obs;T,F)$ is also continuous. The norm of the influence function can now be bounded by:
\begin{equation}
 \| \IfF(\obs;T,F) \| \leq ||| \Bbm^{-1} ||| \times  \|\obs - \Ctr \| \times Q_{d/2}(\tol, \| \obs - \Ctr \|)
 \nonumber
\end{equation}
where $||| \cdot |||$ refers to the norm of a matrix. Let us consider the function $f : t \in [0,+\infty[ \rightarrow t Q_{d/2}(\tol, t)$. This function is continuous. If $\tau \neq 0$, then $Q_{\dim/2}(\tau,t) \thicksim \left ( {t}/{\tau} \right )^{(\dim-1)/2} Q(t-\tau)$ from \cite[p. 1167, Eq. (4)]{Sun2010}, where $Q$ is the Gaussian $Q$ -- function. It follows from \cite[Corollary 1]{Chang2011} that $Q(x) \leqslant \frac{1}{2}e^{-x^2/2}$ and thus, that $\lim_{t \rightarrow \infty} f(t) = 0$. We have the same result if $\tau = 0$. \pastorv{Indeed}, if $\tau=0$, it follows from \cite[p. 1168, Eq. (11)]{Sun2010} and a straightforward change of variable that: 
\small
$$
Q_{\dim/2}(0,x) 
= \dfrac{1}{2^{\dim/2} \Gamma(\dim/2)} \displaystyle \int_{x^2}^\infty t^{\dim/2-1} e^{-t/2} \dt 
\leqslant e^{-x^2/4} 2^{\dim/2}
$$
\normalsize
Since $f(0) = 0$ and $f$ is continuous, we derive from the foregoing that $f$ is upper-bounded and so is the norm of the influence function. Since the influence function is continuous and bounded, the estimator gross error sensitivity 
$\gamma^{\star} = \sup_{\obs} \| \IfF(\obs;T,F) \|$
is finite. This shows that our estimator is robust to outliers.


\section{Asymptotic behaviors}


\label{App: asymptotic behaviors}
\begin{Lemma}
\label{Lemma: asymptotic behaviors}
\pastorv{If $\VZ{\xibm} \thicksim \Ncal(\xibm,\Id)$} with $\xibm \in \Rset^\dim$, then: 
\medskip \\
\noindent
(i) $\Expect{w ( \| \VZ{0} \|^2 ) \VZ{0}} = 0$ \medskip \\
\noindent
(ii) $\lim\limits_{\| \xibm \| \rightarrow \infty} \Expect{w ( \| \VZ{\xibm} \|^2 )} = 0$ \medskip \\
\noindent
(iii) $\displaystyle \lim\limits_{\| \xibm \| \rightarrow \infty} \Expect{w ( \| \VZ{\xibm} \|^2 ) \VZ{\xibm}}
= 0$
\end{Lemma}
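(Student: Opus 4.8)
The plan is to prove the three statements separately; each reduces either to a parity argument or to dominated convergence, once the boundedness and decay properties of the Generalized Marcum function are brought in. For (i), observe that $\VZ{0} \thicksim \Ncal(0,\Id)$, so its law is invariant under the reflection $\zbm \mapsto -\zbm$, while the map $\zbm \mapsto w(\| \zbm \|^2)\zbm$ is odd. Since $0 \leqslant w \leqslant 1$, the integrand is dominated in norm by $\| \VZ{0} \|$, which has finite expectation; hence $\Expect{w(\| \VZ{0} \|^2)\VZ{0}}$ is well defined and equals its own opposite, so it vanishes.

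For (ii), I would write $\VZ{\xibm} = \xibm + \Xbm$ with $\Xbm \thicksim \Ncal(0,\Id)$ and fix an arbitrary sequence $(\xibm_n)$ with $\| \xibm_n \| \to \infty$. For every outcome, $\| \xibm_n + \Xbm \| \geqslant \| \xibm_n \| - \| \Xbm \| \to \infty$, and since $\Marcum(\tol,t) \to 0$ as $t \to \infty$ we get $w(\| \xibm_n + \Xbm \|^2) \to 0$ pointwise; being bounded by the constant $1$, the sequence is dominated, so dominated convergence gives $\Expect{w(\| \VZ{\xibm_n} \|^2)} \to 0$. As $(\xibm_n)$ was arbitrary, (ii) follows.

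For (iii), I bound $\big\| \Expect{w(\| \VZ{\xibm} \|^2)\VZ{\xibm}} \big\| \leqslant \Expect{w(\| \VZ{\xibm} \|^2)\| \VZ{\xibm} \|}$ using $w \geqslant 0$. The extra ingredient compared with (ii) is that the factor $\| \VZ{\xibm} \|$ is itself unbounded, so boundedness of $w$ no longer suffices; instead I invoke the fact established in Appendix \ref{App: robustness} that $t \mapsto t\,\Marcum(\tol,t)$ is continuous, vanishes at $0$, and tends to $0$ as $t \to \infty$ (via $\Marcum(\tol,t)\thicksim(t/\tol)^{(\dim-1)/2}Q(t-\tol)$ for $\tol \neq 0$ together with the Gaussian tail bound, and via $\Marcum(0,t)\leqslant 2^{\dim/2}e^{-t^2/4}$ for $\tol = 0$), hence is bounded by some constant $C$. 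Consequently $\zbm \mapsto w(\| \zbm \|^2)\| \zbm \|$ is bounded (by $C$ up to a fixed factor coming from the argument of $w$) and tends to $0$ as $\| \zbm \| \to \infty$. Repeating the pointwise-convergence argument of (ii), the integrand $w(\| \xibm_n + \Xbm \|^2)\| \xibm_n + \Xbm \|$ tends to $0$ pointwise and is dominated by the constant $C$, so dominated convergence gives $\Expect{w(\| \VZ{\xibm_n} \|^2)\| \VZ{\xibm_n} \|}\to 0$, which yields (iii).

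As for the main obstacle: there is in fact no deep difficulty here, the lemma being a routine combination of a symmetry argument and two applications of dominated convergence. The only place needing genuine care is part (iii), where the additional factor $\| \VZ{\xibm} \|$ forces the use of the \emph{quantitative} decay of $\Marcum(\tol,\cdot)$ rather than merely its boundedness; this is precisely the auxiliary function already analysed in Appendix \ref{App: robustness}, so no new estimate is required.
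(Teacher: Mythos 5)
Your proof is correct. Parts (i) and (ii) follow essentially the same route as the paper: for (i) the paper integrates coordinate-wise via Fubini and kills each inner integral by oddness, which is just the coordinate-level version of your reflection-invariance argument; for (ii) the paper uses exactly your chain $\| \xibm + \Noise \| \geqslant \| \xibm \| - \| \Noise \|$, monotonicity and decay of $\Marcum(\tol,\cdot)$, and dominated convergence with the constant bound $w \leqslant 1$.

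Part (iii) is where you genuinely diverge. The paper splits $\Expect{w ( \| \VZ{\xibm} \|^2 ) \VZ{\xibm}} = \Expect{w ( \| \VZ{\xibm} \|^2 )}\, \xibm + \Expect{w ( \| \VZ{\xibm} \|^2 ) \Noise}$ and treats the two terms separately: the second term is handled as in (ii) with the dominating function $\| \Noise \|$, while the first requires showing that $\Expect{w ( \| \VZ{\xibm} \|^2 )}\, \xi_1 \to 0$, i.e.\ that the expectation of the weight decays faster than $\| \xibm \|$ grows; the paper's dominated-convergence step there is stated somewhat loosely (the exhibited bounds $\pm \| \xibm \| e^{-\frac{1}{2}\| \ybm - \xibm \|^2}$ still depend on $\xibm$, so an honest dominating function would need the Gaussian decay of $w(\| \ybm \|^2)$ itself). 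Your argument bypasses this entirely: bounding $\big\| \Expect{w ( \| \VZ{\xibm} \|^2 ) \VZ{\xibm}} \big\|$ by $\Expect{f(\| \VZ{\xibm} \|)}$ with $f(t) = t\,\Marcum(\tol,t)$, and reusing the fact from Appendix~\ref{App: robustness} that $f$ is continuous, bounded, and vanishes at infinity, reduces (iii) to one clean application of dominated convergence with a constant dominating function. This is shorter, avoids the delicate step in the paper's treatment of the $\Expect{w}\xibm$ term, and makes explicit that the only quantitative input is the decay of $t\,\Marcum(\tol,t)$ already needed for the influence-function analysis. The trade-off is minor: the paper's decomposition isolates the two sources of the limit (the drift $\xibm$ and the noise $\Noise$), which is mildly informative, but your route is the more economical proof.
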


\begin{IEEEproof} As a preliminary result, we recall that:
\begin{equation}
\label{Eq:Prop of Q}
\lim\limits_{t \to \infty} \myQ{t} = 0, 
\end{equation}
which derives from the fact that $\myQ{t} = 1 - \Fbb_{\chi^2(\tau^2)}(t^2)$ \pastorv{\cite[Eq. (8)]{Sun2010}}, where $\Fbb_{\chi_2(\tau^2)}$ is the non-centered $\chi_2$ distribution with $\dim$ degrees of freedom and non-centrality parameter $\tau^2$.
\\
\noindent
{\em Proof of statement (i):} The vector $\Expect{w ( \| \VZ{0} \|^2 ) \VZ{0}}$ is $\dim$-dimensional. Its first component is:
$$
\Expect{w ( \| \VZ{0} \|^2 ) Z_1(0)}
=
\displaystyle \int_{\Rset^\dim} w \Big ( \sum_{k=1}^\dim z_k^2 \Big ) z_1 \varphi(\zbm) \d \zbm
$$
where $Z_1(0)$ is the first component of $\VZ{0}$, $\zbm = (z_1, z_2, \ldots, z_\dim)^\transpose$ and $\varphi$ is the probability density function (pdf) of the standard distribution $\Ncal(0,\Id)$. Since $w \Big ( \sum_{k=1}^\dim z_k^2 \Big )$ is bounded by $1$ and $\Expect{| Z_1(0) |}$ is finite, Fubini's theorem applies and we have:
\begin{onecol}
$$
\begin{array}{lll}
\Expect{w ( \| \VZ{0} \|^2 ) Z_1(0)}
= \\
\dfrac{1}{(2 \pi)^{N/2}} \displaystyle \int \left ( \ldots \left ( \displaystyle \int \left ( \int w \Big ( \sum_{i=1}^\dim z_i^2 \Big ) z_1 e^{-{z_1^2}/{2}} \d z_1 \right ) e^{-{z_2^2}/{2}} \d z_2  \right ) \ldots \right ) e^{-{z_\dim^2}/{2}} \d z_\dim.
\end{array}
$$
\end{onecol}
\begin{twocol}
\small
\begin{align}
\nonumber
& \Expect{w ( \| \VZ{0} \|^2 ) Z_1(0)} = \dfrac{1}{(2 \pi)^{N/2}} \\
\nonumber
& \displaystyle \int \! \left ( \ldots \! \left ( \displaystyle \int \! \left ( \int w \Big ( \sum_{i=1}^\dim z_i^2 \Big ) z_1 e^{-\frac{z_1^2}{2}} \d z_1 \right ) \!  e^{-\frac{z_2^2}{2}} \d z_2  \right ) \! \ldots \right ) \! e^{-\frac{z_\dim^2}{2}} \d z_\dim.
\end{align}
\normalsize
\end{twocol}
Since $\displaystyle \int \! w \Big ( \sum_{i=1}^\dim z_i^2 \Big ) z_1 e^{-{{z_1^2}/{2}}} \d z_1 = 0$ because the integrand is odd, it follows that $\Expect{w ( \| \VZ{0} \|^2 ) Z_1(0)}  = 0.$ The same type of computation holds for any component of $\Expect{w ( \| \VZ{0} \|^2 ) \VZ{0}}$. Thence the result. 
\medskip \\
\noindent
{\em Proof of statement (ii):} By definition of $w$ and $\VZ{\xibm}$, we have:
\small
\begin{equation}
\label{Eq: expressions w}
w ( \| \VZ{\xibm} \|^2 ) = \myQ{\| \VZ{\xibm} \|} = \myQ{\| \xibm + \Noise \|}
\end{equation}
\normalsize
Since $\| \xibm + \Noise \| \geqslant \| \xibm \| - \| \Noise \|$ and $\myQ{\mycdot}$ is decreasing, we derive from \eqref{Eq:Prop of Q} and \eqref{Eq: expressions w} that:
\begin{equation}
\label{Eq: useful cvgce}
\lim\limits_{\| \xibm \| \rightarrow \infty} w ( \| \VZ{\xibm} \|^2 ) = 0 \quad \text{(a-s)}
\end{equation}
The result then derives from Lebesgue's dominated convergence theorem.
\medskip \\
\noindent
{\em Proof of statement (iii):} We begin by writing that 
\begin{onecol}
\begin{equation}
\label{Eq: Basic equality for (iv)}
\Expect{w ( \| \VZ{\xibm} \|^2 ) \VZ{\xibm}} = \Expect{w ( \| \VZ{\xibm} \|^2 )} \xibm + \Expect{w ( \| \VZ{\xibm} \|^2 ) \Noise} .
\end{equation}
\end{onecol}
\begin{twocol}
\begin{align}
\label{Eq: Basic equality for (iv)}
\Ebb \big [ w ( \| & \VZ{ \xibm} \|^2 ) \VZ{\xibm} \big ] \nonumber \\ 
& = \Expect{w ( \| \VZ{\xibm} \|^2 )} \xibm + \Expect{w ( \| \VZ{\xibm} \|^2 ) \Noise} .
\end{align}
\end{twocol}
Set $\xibm = ( \xi_1, \xi_2, \ldots, \xi_\dim )^\transpose$. The first component of the first term to the rhs of the equality above can be rewritten as:
\begin{equation}
\label{Eq: inequalities for (iv)}
\Expect{w ( \| \VZ{\xibm} \|^2 )} \xi_1 = \dfrac{1}{(2 \pi)^{\dim/2}} \displaystyle \int_{\Rset^\dim} w \left ( \| \ybm \|^2 \right ) \xi_1 e^{-\frac{1}{2} \| \ybm - \xibm \|^2} \d \ybm
\end{equation}
The inequality $-\| \xibm \| \leqslant \xi_1 \leqslant \| \xibm \|$ induces that:
$$
-\| \xibm \| e^{-\frac{1}{2} \| \ybm - \xibm \|^2} \leqslant \xi_1 e^{-\frac{1}{2} \| \ybm - \xibm \|^2} \leqslant \| \xibm \| e^{-\frac{1}{2} \| \ybm - \xibm \|^2}
$$
For any given $\ybm \in \Rset^\dim$, the left and right bounds in the inequality above tend to $0$ when $\| \xibm \|$ tends to $\infty$. Lebesgue's dominated convergence theorem applied to \eqref{Eq: inequalities for (iv)} yields that $$\lim\limits_{\| \xibm \| \to \infty} \Expect{w ( \| \VZ{\xibm} \|^2 )} \xi_1 = 0$$ The same reasoning holds for any component of $\xibm$. Therefore,
\begin{equation}
\label{Eq: lim of expect1}
\lim\limits_{\| \xibm \| \to \infty} \Expect{w ( \| \VZ{\xibm} \|^2 )} \xibm = 0
\end{equation}
As far as the second term to the rhs of \eqref{Eq: Basic equality for (iv)} is concerned, we have ${w ( \| \VZ{\xibm} \|^2 ) \Noise} \leqslant \| \Noise \|$. Since $\Expect{\| \Noise \|} < \infty$, we derive from Lebesgue's dominated convergence theorem and \eqref{Eq: useful cvgce} that: 
\begin{equation}
\label{Eq: lim of expect2}
\lim\limits_{\| \xibm \| \to \infty} \Expect{w ( \| \VZ{\xibm} \|^2 ) \Noise} = 0
\end{equation}
Thence the \pastorv{result} as a consequence of (\ref{Eq: Basic equality for (iv)}), (\ref{Eq: lim of expect1}) and (\ref{Eq: lim of expect2}). 
\end{IEEEproof}


\section{}\label{App:D}

\begin{onecol}
\begin{Lemma}
\label{Lemma: last useful lemma}
\pastorv{Let} $\Zbm$ be a $\dim$-dimensional Gaussian vector with covariance matrix $\Id$. \pastorv{If} $\fbm: \Rset^\dim \to [0,\infty)$ is \pastorv{non-null}, continuous \pastorv{and even in each coordinate of $\xbm = (x_1, \ldots, x_\dim)^\transpose \in \Rset^\dim$ so that $$\fbm(x_1, \ldots,x_{i-1},x_i,x_{i+1}, \ldots, x_\dim) = \fbm(x_1, \ldots,x_{i-1},-x_i,x_{i+1}, \ldots, x_\dim),$$}  then:
\[
\Expect{\fbm ( \Zbm ) \Zbm} = 0 \, \, \, \, \text{if and only if} \, \, \, \, \Expect{\Zbm} = 0.
\]
\end{Lemma}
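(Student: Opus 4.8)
The plan is to prove the two implications separately; only ``$\Expect{\fbm(\Zbm)\Zbm}=0 \Rightarrow \Expect{\Zbm}=0$'' requires any real work.

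For the converse, if $\Expect{\Zbm}=0$ then $\Zbm \thicksim \Ncal(0,\Id)$ and its density $\varphi$ (the $\Ncal(0,\Id)$ pdf) is even in each coordinate. For each $i$, the map $\zbm \mapsto \fbm(\zbm)\,z_i\,\varphi(\zbm)$ is therefore odd in the variable $z_i$, because $\fbm$ is even in $z_i$; Fubini's theorem --- legitimate once one observes that $\fbm(\Zbm)\|\Zbm\|$ is integrable, which is automatic in the paper's setting since the weight function satisfies $w=\Marcum(0,\bullet)\leqslant 1$ --- then shows that the $i$-th component $\int_{\Rset^\dim}\fbm(\zbm)z_i\varphi(\zbm)\,\mathrm{d}\zbm$ of $\Expect{\fbm(\Zbm)\Zbm}$ vanishes. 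This is precisely the argument proving statement (i) of Lemma~\ref{Lemma: asymptotic behaviors}.

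For the forward implication I would argue coordinate by coordinate. Write $\Zbm \thicksim \Ncal(\xibm,\Id)$ with $\xibm=(\xi_1,\ldots,\xi_\dim)^\transpose=\Expect{\Zbm}$, fix an index $i$, and exploit the independence of the coordinates of $\Zbm$ by conditioning on $\Zbm'=(Z_j)_{j\neq i}$: this expresses $\Expect{\fbm(\Zbm)\,Z_i}$ as the average over $\Zbm'$ of the one-dimensional integral $I(\zbm'):=\int_{\Rset}\fbm(\ldots,z_i,\ldots)\,z_i\,\varphi_1(z_i-\xi_i)\,\mathrm{d}z_i$, where $\varphi_1$ is the standard scalar Gaussian density. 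Since $z_i\mapsto\fbm(\ldots,z_i,\ldots)$ is even, folding the integral onto $(0,\infty)$ gives $I(\zbm')=\int_0^\infty\fbm(\ldots,z_i,\ldots)\,z_i\,[\varphi_1(z_i-\xi_i)-\varphi_1(z_i+\xi_i)]\,\mathrm{d}z_i$. The crucial elementary fact is that for $z_i>0$ the bracket $\varphi_1(z_i-\xi_i)-\varphi_1(z_i+\xi_i)$ has the same strict sign as $\xi_i$: this follows from $\varphi_1$ being even and strictly decreasing on $[0,\infty)$ together with $(z_i+\xi_i)^2-(z_i-\xi_i)^2=4z_i\xi_i$.

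Finally I would conclude by contradiction. Suppose $\xi_i\neq 0$ for some $i$. Since the density of $\Zbm'$ is everywhere positive while, by the sign fact above, $I(\zbm')$ keeps the constant sign $\mathrm{sign}(\xi_i)$, the hypothesis $\Expect{\fbm(\Zbm)Z_i}=0$ forces $I(\zbm')=0$ for almost every $\zbm'$; because the integrand defining $I(\zbm')$ is then a nonnegative (or nonpositive) function that integrates to zero, while $z_i\,[\varphi_1(z_i-\xi_i)-\varphi_1(z_i+\xi_i)]$ is strictly of sign $\mathrm{sign}(\xi_i)$ for every $z_i>0$, this forces $\fbm(\ldots,z_i,\ldots)=0$ for almost every $(z_i,\zbm')$, i.e.\ $\fbm=0$ Lebesgue-a.e.\ on $\Rset^\dim$; by continuity $\fbm\equiv 0$, contradicting that $\fbm$ is non-null. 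Hence $\xi_i=0$ for every $i$, i.e.\ $\Expect{\Zbm}=0$. The main obstacle is not conceptual but bookkeeping: justifying Fubini and the sign manipulations, and keeping $\varphi_1(z_i-\xi_i)-\varphi_1(z_i+\xi_i)$ straight for both signs of $\xi_i$; the one genuinely useful idea is that evenness of $\fbm$ upgrades the cancellation identity $\Expect{\fbm(\Zbm)Z_i}=0$ into a pointwise sign statement, which makes the clash with non-nullity immediate.
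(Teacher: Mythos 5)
Your proof is correct and follows essentially the same route as the paper's: both reduce to a single coordinate via Fubini, fold the one-dimensional integral onto $(0,\infty)$ using the evenness of $\fbm$ in that coordinate, and observe that the resulting kernel $z\,[\varphi_1(z-\xi_i)-\varphi_1(z+\xi_i)]$ (which the paper writes as $t\,e^{-(t^2+\xi^2)/2}(e^{\xi t}-e^{-\xi t})$) keeps the strict sign of $\xi_i$ for $z>0$, so that vanishing of the expectation contradicts the non-nullity and continuity of $\fbm$ unless $\xi_i=0$. The only difference is bookkeeping --- the paper integrates out the remaining coordinates first to obtain a one-dimensional even non-negative function and invokes a 1D base case, whereas you condition on those coordinates and use the positivity of their density --- and your write-up is, if anything, slightly more explicit than the paper's about where non-nullity enters.
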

\end{onecol}
\begin{twocol}
\begin{Lemma}
\label{Lemma: last useful lemma}
\pastorv{If} $\fbm: \Rset^\dim \to [0,\infty)$ is \pastorv{non-null}, continuous and even in each coordinate of $\xbm = (x_1, \ldots, x_\dim)^\transpose \in \Rset^\dim$ in the sense that $\fbm(x_1, \ldots,x_{i-1},x_i, x_{i+1}, \ldots, x_\dim) = \fbm(x_1, \ldots,x_{i-1},-x_i,x_{i+1}, \ldots, x_\dim)$ then, for any $\Zbm \thicksim \Ncal(\xibm,\Id)$. 
$\Expect{\fbm ( \Zbm ) \Zbm} = 0 \, \, \, \, \text{if and only if} \, \, \, \, \Expect{\Zbm} = 0.$
\end{Lemma}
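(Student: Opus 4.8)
The plan is to reduce the equivalence to a coordinatewise sign analysis of $\Expect{\fbm(\Zbm)\Zbm}$. Write $\Zbm=\xibm+\Noise$ with $\Noise\thicksim\Ncal(0,\Id)$, so that $\Expect{\Zbm}=\xibm$ and the claim becomes: $\Expect{\fbm(\Zbm)\Zbm}=0$ if and only if $\xibm=0$. Denote by $\varphi$ the standard one-dimensional Gaussian density and, for $\xibm=(\xi_1,\dots,\xi_\dim)^\transpose$, let $G_i(\xibm)$ be the $i$-th coordinate of $\Expect{\fbm(\Zbm)\Zbm}$, i.e. $G_i(\xibm)=\int_{\Rset^\dim}\fbm(\zbm)\,z_i\,\prod_{j=1}^\dim\varphi(z_j-\xi_j)\,\d\zbm$.

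First I would establish a ``folded'' expression for $G_i$. Splitting the integral over $\{z_i>0\}$ and $\{z_i<0\}$ and applying the change of variable $z_i\mapsto -z_i$ in the second piece, the evenness of $\fbm$ in its $i$-th argument together with that of $\varphi$ gives
\begin{equation}\nonumber
G_i(\xibm)=\int_{\Rset^{\dim-1}}\!\!\Big(\int_0^\infty \fbm(z_1,\dots,u,\dots,z_\dim)\,u\,\big[\varphi(u-\xi_i)-\varphi(u+\xi_i)\big]\,\d u\Big)\prod_{j\ne i}\varphi(z_j-\xi_j)\,\prod_{j\ne i}\d z_j,
\end{equation}
the use of Fubini being legitimate because, once the bracket is isolated, the remaining integrand is non-negative (this is the same manoeuvre as in Appendix~\ref{App: asymptotic behaviors}). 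The elementary inequality $(u-\xi_i)^2<(u+\xi_i)^2$ for $u,\xi_i>0$, combined with the strict monotonicity of $\varphi$ in the absolute value of its argument, shows that $\varphi(u-\xi_i)-\varphi(u+\xi_i)$ is $>0$ for every $u>0$ when $\xi_i>0$, is $<0$ for every $u>0$ when $\xi_i<0$, and vanishes identically when $\xi_i=0$.

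The ``if'' direction is then immediate: if $\xibm=0$ every bracket vanishes, so $G_i(0)=0$ for all $i$ and $\Expect{\fbm(\Zbm)\Zbm}=0$. For the ``only if'' direction, assume $\xibm\ne 0$ and pick an index $i$ with $\xi_i\ne 0$; since the substitution $\xi_i\mapsto-\xi_i$ turns $G_i$ into $-G_i$ (again by evenness of $\fbm$ and $\varphi$), I may assume $\xi_i>0$. Then in the folded expression the factor $u\,[\varphi(u-\xi_i)-\varphi(u+\xi_i)]$ is strictly positive on $\{u>0\}$, the Gaussian weights $\prod_{j\ne i}\varphi(z_j-\xi_j)$ are strictly positive, and $\fbm\ge 0$; hence $G_i(\xibm)\ge 0$, with equality only if $\fbm$ vanishes Lebesgue-a.e. on $\Rset^{\dim-1}\times(0,\infty)$. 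To rule this out I would invoke continuity and evenness: $\{\fbm>0\}$ is a non-empty open set invariant under the reflection $z_i\mapsto-z_i$, hence it cannot be contained in the hyperplane $\{z_i=0\}$ and must meet the open half-space $\{z_i>0\}$ in a non-empty open, hence positive-measure, set. Therefore $G_i(\xibm)>0$, contradicting $\Expect{\fbm(\Zbm)\Zbm}=0$, and so $\xibm=0$.

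The routine parts are the symmetry computation giving the folded expression and the one-line sign of $\varphi(u-\xi_i)-\varphi(u+\xi_i)$; the point requiring care is the final non-degeneracy argument, where one genuinely needs both the continuity of $\fbm$ (to get a non-empty \emph{open} positivity set) and its coordinatewise evenness (to place part of that set in $\{z_i>0\}$). I would also note that in the paper's application $\fbm(\zbm)=\Marcum(\tol,\|\zbm\|)\le 1$, so all the expectations are automatically finite and no integrability hypothesis beyond the statement is needed.
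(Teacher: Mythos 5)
Your proof is correct and follows essentially the same route as the paper's: the paper first treats $\dim=1$ by splitting the integral at the origin and changing variables, which produces the factor $e^{-\frac{1}{2}(t^2+\xi^2)}\bigl(e^{\xi t}-e^{-\xi t}\bigr)$ --- exactly your bracket $\varphi(t-\xi_i)-\varphi(t+\xi_i)$ up to a constant --- and then reduces the general case to this one coordinate by coordinate via Fubini, just as you do directly in $\Rset^\dim$. Your final non-degeneracy step (using continuity and coordinatewise evenness to show the positivity set of $\fbm$ meets the open half-space) is spelled out more carefully than in the paper, which leaves that point implicit, but the argument is the same.
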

\end{twocol}

\begin{IEEEproof}
If $\dim = 1$, $\Zbm$ is a random variable $Z \thicksim \Ncal(\xi,1)$ and $\fbm$ is a nonnegative real function $f: \Rset \to [0,\infty)$. We have $\Expect{f(Z)Z} \hspace{-0.25cm} = \hspace{-0.25cm} \frac{1}{\sqrt{2 \pi}} \int_{-\infty}^\infty f(z) z e^{-\frac{1}{2} (z-\xi)^2} \dz$. By splitting this integral in two, symmetrically with respect to the origin, and after the change of variable $t = -z$ in the integral from $-\infty$ to $0$ resulting from the splitting, some routine algebra leads to
$\Expect{f(Z)Z} = \frac{1}{\sqrt{2 \pi}} \int_{0}^\infty f(-t) t e^{-\frac{1}{2} (t^2 + \xi^2)} \left ( e^{\xi t} - e^{-\xi t} \right ) \dt$. The integrand in this integral is non-negative and continuous. Therefore, $\Expect{f(Z)Z} =0$ implies that $f(-t) t e^{-\frac{1}{2} (t^2 + \xi^2)} \left ( e^{\xi t} - e^{-\xi t} \right ) = 0$ for any $t$, which induces $\xi = 0$. \pastorv{The converse is straightforward.}

In the $\dim$-dimensional case, set $\Zbm = (Z_1, Z_2, \ldots, Z_\dim)$ and denote the expectation $\Expect{\Zbm}$ of $\Zbm$ by $\xibm = (\xi_1, \xi_2, \ldots, \xi_\dim)$. 
Let $\fbm: \Rset^\dim \to [0,\infty)$ be a \pastorv{nonnull} and continuous function. Clearly, $\Expect{f(\Zbm) \Zbm} = 0$ if and only if $\Expect{f(\Zbm) Z_i} = 0$ for each coordinate $Z_i$, $i \in \{1, 2, \ldots, \dim \}$. For the first coordinate $Z_1$ of $\Zbm$, it follows from Fubini's theorem that:
\[
\vspace{-0.1cm}
\Expect{\fbm ( \Zbm ) Z_1} =\displaystyle \int f(z_1) \, z_1 \, e^{-\frac{1}{2} (z_1 - \xi_1)^2} \dz_1
\vspace{-0.2cm}
\]
with 
$$\vspace{-0.1cm}
f(z_1) \! = \! \dfrac{1}{(2 \pi)^{d/2}} \! \displaystyle \int \! \fbm(z_1, \ldots, z_\dim) e^{-\frac{1}{2} \sum_{k=2}^\dim (z_k - \xi_k)^2} \dz_2  \ldots  \dz_\dim.$$ The function $f$ is defined on $\Rset$, continuous and non-negative too. The result then follows from the monodimensional case treated above.
\end{IEEEproof}

\section{Proof of  $\Expect{w \left ( \| \Noise \|^2 \right )^2 \Noise \Noise^\transpose} = \Expect{w \left ( \| \Noise \|^2 \right )^2} \Id$ when $\Noise \thicksim \Ncal(0,\Id)$}
\label{App: Correlation matrix}
\pastorv{Set $\Noise = (\Noise_1, \Noise_2, \ldots, \Noise_\dim)^\transpose$ and} compute the term $c_{i,j}$ located at the $i$th line and $j$th colum of the matrix $\Expect{w \left ( \| \Noise \|^2 \right )^2 \Noise \Noise^\transpose}$ with $i \ne j$. We have:
$$
c_{i,j} 
= \Expect{w \left ( \| \Noise \|^2 \right )^2 \Noise_i \Noise_j} 
= \displaystyle \int w \left ( \| \xbm \|^2 \right )^2 x_i x_j \varphi(\xbm) \d \xbm
$$
where $\varphi$ is the pdf of $\Noise \thicksim \Ncal(0,\Id)$. By independence of the components of $\Noise$ and Fubini's theorem, the foregoing implies: 
\begin{onecol}
$$
c_{i,j} 
= \dfrac{1}{(2\pi)^{\dim/2}} \displaystyle \int \prod_{k=1, k\ne i,j}^\dim e^{-x_k^2/2} \d x_k \displaystyle \int \left ( \, \displaystyle \int w \left ( \| \xbm \|^2 \right )^2 x_i \varphi(x_i) \d x_i \, \right ) x_j \varphi(x_j)  \d x_j
$$
\end{onecol}
\begin{twocol}
\small
\begin{align}
\nonumber
& c_{i,j} = \\ \nonumber
& \dfrac{1}{(2\pi)^{\frac{\dim}{2}}} \! \! \displaystyle \int \! \! \! \! \prod_{k=1, k\ne i,j}^\dim \! \! \! e^{-\frac{x_k^2}{2}} \d x_k \! \displaystyle \int \! \left ( \displaystyle \int \! w \left ( \| \xbm \|^2 \right )^2 x_i \varphi(x_i) \d x_i \! \right ) \! x_j \varphi(x_j) \d x_j
\end{align}
\normalsize
\end{twocol}
Since $\displaystyle \int w \left ( \| \Noise \|^2 \right )^2 x_i \varphi(x_i) \d x_i = 0$ because the integrand in this integral is odd, we conclude that $c_{i,j} = 0$ for $i \ne j$.}

We now compute $c_{i,i}$ for $i = 1, \ldots, \dim$. Similarly to above, Fubini's theorem implies that:
\small
\begin{align}
\begin{array}{lll}
c_{i,i} & = \Expect{w \left ( \| \Noise \|^2 \right )^2 \Noise^2_i} 
\\
& = \dfrac{1}{(2\pi)^{\dim/2}} \! \displaystyle \int \left ( \displaystyle \int w \left ( \| \xbm \|^2 \right )^2 x_i^2 e^{-\frac{x_i^2}{2}} \d x_i \right ) \! \! \! \prod_{k=1, k\ne i}^\dim \! \! e^{-\frac{x_k^2}{2}} \d x_k  
\\
& = \dfrac{1}{(2\pi)^{\dim/2}} \! \displaystyle \int \Ebb \Big [ w \big ( \Noise_i^2 + \! \displaystyle \sum_{j=1,j\ne i}^\dim x_j^2 \big )^{2} \, \Big ]  \prod_{k=1, k\ne i}^\dim \! e^{-\frac{x_k^2}{2}} \d x_k  
\\ \notag
& = \Expect{w \left ( \| \Noise \|^2\right )^2}
\end{array}
\end{align}
\normalsize
the last equality being obtained by iterating the integration over all the components of $\Noise$.

\bibliographystyle{IEEEtran}
\bibliography{IEEEabrv,Refs}

\begin{thebibliography}{10}
\providecommand{\url}[1]{#1}
\csname url@samestyle\endcsname
\providecommand{\newblock}{\relax}
\providecommand{\bibinfo}[2]{#2}
\providecommand{\BIBentrySTDinterwordspacing}{\spaceskip=0pt\relax}
\providecommand{\BIBentryALTinterwordstretchfactor}{4}
\providecommand{\BIBentryALTinterwordspacing}{\spaceskip=\fontdimen2\font plus
\BIBentryALTinterwordstretchfactor\fontdimen3\font minus
  \fontdimen4\font\relax}
\providecommand{\BIBforeignlanguage}[2]{{%
\expandafter\ifx\csname l@#1\endcsname\relax
\typeout{** WARNING: IEEEtran.bst: No hyphenation pattern has been}%
\typeout{** loaded for the language `#1'. Using the pattern for}%
\typeout{** the default language instead.}%
\else
\language=\csname l@#1\endcsname
\fi
#2}}
\providecommand{\BIBdecl}{\relax}
\BIBdecl

\bibitem{yick08CN}
J.~Yick, B.~Mukherjee, and D.~Ghosal, ``Wireless sensor network survey,''
  \emph{Computer networks}, vol.~52, no.~12, pp. 2292--2330, 2008.

\bibitem{omeni08BCS}
O.~Omeni, A.~C.~W. Wong, A.~J. Burdett, and C.~Toumazou, ``Energy efficient
  medium access protocol for wireless medical body area sensor networks,''
  \emph{IEEE Transactions on biomedical circuits and systems}, vol.~2, no.~4,
  pp. 251--259, 2008.

\bibitem{ordonez13S}
F.~J. Ord{\'o}{\~n}ez, P.~de~Toledo, and A.~Sanchis, ``Activity recognition
  using hybrid generative/discriminative models on home environments using
  binary sensors,'' \emph{Sensors}, vol.~13, no.~5, pp. 5460--5477, 2013.

\bibitem{sahasranand15ICC}
K.~Sahasranand and V.~Sharma, ``Distributed nonparametric sequential spectrum
  sensing under electromagnetic interference,'' in \emph{IEEE International
  Conference on Communications (ICC)}.\hskip 1em plus 0.5em minus 0.4em\relax
  IEEE, 2015, pp. 7521--7527.

\bibitem{tuna14AHN}
G.~Tuna, V.~C. Gungor, and K.~Gulez, ``An autonomous wireless sensor network
  deployment system using mobile robots for human existence detection in case
  of disasters,'' \emph{Ad Hoc Networks}, vol.~13, pp. 54--68, 2014.

\bibitem{zhou16ICC}
Q.~Zhou, F.~Ye, X.~Wang, and Y.~Yang, ``Automatic construction of garage maps
  for future vehicle navigation service,'' in \emph{IEEE International
  Conference on Communications (ICC)}.\hskip 1em plus 0.5em minus 0.4em\relax
  IEEE, 2016, pp. 1--7.

\bibitem{wang16ICC}
G.~Wang, Y.~Zhao, J.~Huang, Q.~Duan, and J.~Li, ``A k-means-based network
  partition algorithm for controller placement in software defined network,''
  in \emph{IEEE International Conference on Communications (ICC)}.\hskip 1em
  plus 0.5em minus 0.4em\relax IEEE, 2016, pp. 1--6.

\bibitem{jain10PRL}
A.~K. Jain, ``{Data clustering: 50 years beyond K-means},'' \emph{Pattern
  recognition letters}, vol.~31, no.~8, pp. 651--666, 2010.

\bibitem{arthur07SIAM}
D.~Arthur and S.~Vassilvitskii, ``k-means++: The advantages of careful
  seeding,'' in \emph{Proceedings of the eighteenth annual ACM-SIAM symposium
  on Discrete algorithms}.\hskip 1em plus 0.5em minus 0.4em\relax Society for
  Industrial and Applied Mathematics, 2007, pp. 1027--1035.

\bibitem{pelleg2000x}
D.~Pelleg, A.~W. Moore \emph{et~al.}, ``X-means: Extending k-means with
  efficient estimation of the number of clusters.'' in \emph{ICML}, vol.~1,
  2000, pp. 727--734.

\bibitem{wu02PR}
K.-L. Wu and M.-S. Yang, ``Alternative c-means clustering algorithms,''
  \emph{Pattern recognition}, vol.~35, no.~10, pp. 2267--2278, 2002.

\bibitem{datta09KDE}
S.~Datta, C.~Giannella, and H.~Kargupta, ``Approximate distributed k-means
  clustering over a peer-to-peer network,'' \emph{IEEE Transactions on
  Knowledge and Data Engineering}, vol.~21, no.~10, pp. 1372--1388, 2009.

\bibitem{di11DMW}
G.~Di~Fatta, F.~Blasa, S.~Cafiero, and G.~Fortino, ``Epidemic k-means
  clustering,'' in \emph{IEEE 11th international conference on data mining
  workshops}.\hskip 1em plus 0.5em minus 0.4em\relax IEEE, 2011, pp. 151--158.

\bibitem{fellus13dDMW}
J.~Fellus, D.~Picard, and P.-H. Gosselin, ``Decentralized k-means using
  randomized gossip protocols for clustering large datasets,'' in \emph{IEEE
  13th International Conference on Data Mining Workshops}.\hskip 1em plus 0.5em
  minus 0.4em\relax IEEE, 2013, pp. 599--606.

\bibitem{ester1996density}
M.~Ester, H.-P. Kriegel, J.~Sander, X.~Xu \emph{et~al.}, ``A density-based
  algorithm for discovering clusters in large spatial databases with noise.''
  in \emph{KDD}, 1996.

\bibitem{ankerst1999optics}
M.~Ankerst, M.~M. Breunig, H.-P. Kriegel, and J.~Sander, ``Optics: ordering
  points to identify the clustering structure,'' in \emph{ACM Sigmod record},
  vol.~28, no.~2.\hskip 1em plus 0.5em minus 0.4em\relax ACM, 1999, pp. 49--60.

\bibitem{steinbach2000comparison}
M.~Steinbach, G.~Karypis, V.~Kumar \emph{et~al.}, ``A comparison of document
  clustering techniques,'' in \emph{KDD workshop on text mining}, vol. 400,
  no.~1.\hskip 1em plus 0.5em minus 0.4em\relax Boston, 2000, pp. 525--526.

\bibitem{huang1998extensions}
Z.~Huang, ``Extensions to the k-means algorithm for clustering large data sets
  with categorical values,'' \emph{Data mining and knowledge discovery},
  vol.~2, no.~3, pp. 283--304, 1998.

\bibitem{bouveyron14CSDA}
C.~Bouveyron and C.~Brunet-Saumard, ``Model-based clustering of
  high-dimensional data: A review,'' \emph{Computational Statistics \& Data
  Analysis}, vol.~71, pp. 52--78, 2014.

\bibitem{forero11SP}
P.~A. Forero, A.~Cano, and G.~B. Giannakis, ``Distributed clustering using
  wireless sensor networks,'' \emph{IEEE Journal of Selected Topics in Signal
  Processing}, vol.~5, no.~4, pp. 707--724, 2011.

\bibitem{Huber2009}
P.~Huber and E.~Ronchetti, \emph{Robust Statistics, second edition}.\hskip 1em
  plus 0.5em minus 0.4em\relax John Wiley and Sons, 2009.

\bibitem{Rousseeuw93}
P.~Rousseeuw and C.~Croux, ``Alternatives to the median absolute deviation,''
  \emph{Journal of the American Statistical Association}, vol.~88, no. 424, pp.
  1273 -- 1283, December 1993.

\bibitem{Pastor2012a}
D.~Pastor and F.~Socheleau, ``Robust estimation of noise standard deviation in
  presence of signals with unknown distributions and occurrences,'' \emph{IEEE
  Transactions on Signal Processing}, vol.~60, no.~4, 2012.

\bibitem{zoubir12robust}
A.~M. Zoubir, V.~Koivunen, Y.~Chakhchoukh, and M.~Muma, ``Robust estimation in
  signal processing: A tutorial-style treatment of fundamental concepts,''
  \emph{IEEE Signal Processing Magazine}, vol.~29, no.~4, pp. 61--80, 2012.

\bibitem{Wald1943}
A.~Wald, ``Tests of statistical hypotheses concerning several parameters when
  the number of observations is large,'' \emph{Transactions of the American
  Mathematical Society}, vol.~54, no.~3, pp. 426 -- 482, Nov. 1943.

\bibitem{Sun2010}
Y.~Sun, A.~Baricz, and S.~Zhou, ``{On the Monotonicity, Log-Concavity, and
  Tight Bounds of the Generalized Marcum and Nuttall Q-Functions},'' \emph{IEEE
  Transactions on Information Theory}, vol.~56, no.~3, pp. 1166 -- 1186, Mar.
  2010.

\bibitem{Lehmann}
E.~L. Lehmann and J.~P. Romano, \emph{Testing Statistical Hypotheses, 3rd
  edition}.\hskip 1em plus 0.5em minus 0.4em\relax Springer, 2005.

\bibitem{RDT}
D.~Pastor and Q.-T. Nguyen, ``{Random Distortion Testing and Optimality of
  Thresholding Tests},'' \emph{IEEE Transactions on Signal Processing},
  vol.~61, no.~16, pp. 4161 -- 4171, Aug. 2013.

\bibitem{Hampel74}
F.~Hampel, ``The influence curve and its role in robust estimation,''
  \emph{Journal of the American Statistical Association}, vol.~69, no. 346, pp.
  383 -- 393, June 1974.

\bibitem{Hampel86}
F.~Hampel, E.~Ronchetti, P.~Rousseeuw, and W.~Stahel, \emph{Robust Statistics:
  the Approach based on Influence Functions}.\hskip 1em plus 0.5em minus
  0.4em\relax John Wiley and Sons, New York, 1986.

\bibitem{Vaart1998}
A.~van~der Vaart, \emph{Asymptotic statistics}.\hskip 1em plus 0.5em minus
  0.4em\relax Cambridge Unversity Press, 1998.

\bibitem{Serfling1980}
R.~J. Serfling, \emph{Approximations theorems of mathematical
  statistics}.\hskip 1em plus 0.5em minus 0.4em\relax Wiley, 1980.

\bibitem{Chang2011}
S.-H. Chang, P.~C. Cosman, and L.~B. Milstein, ``Chernoff-type bounds for the
  gaussian error function,'' \emph{IEEE Transactions on communications},
  vol.~59, no.~11, Nov. 2011.

\end{thebibliography}

\end{document}